\newtheorem{theorem}{Theorem}[section]
\newenvironment{introtheorem}[1]
  {\intro}
  {\endintro}
\newtheorem{proposition}[theorem]{Proposition}
\newtheorem{lemma}[theorem]{Lemma}
\theoremstyle{remark}
\newtheorem{definition}{Definition}
\newtheorem*{remark}{Remark}
\newtheorem*{notation}{Notation}
\numberwithin{equation}{section}
\newcommand{\bt}{\mathbf t}                             
\DeclareMathOperator{\e}{\mathrm{e}}                    
\newcommand{\tO}{\mathtt 0}                   
\newcommand{\tL}{\mathtt 1}                   
\DeclareMathOperator{\logp}{\log^+\!}                   
\begin{document}
\title[Level of distribution of Thue--Morse]%
{The level of distribution of the Thue--Morse sequence}
\author{Lukas Spiegelhofer}
\address{Institute of Discrete Mathematics and Geometry,
Vienna University of Technology,
Wiedner Hauptstrasse 8--10, 1040 Vienna, Austria}

\thanks{
The author acknowledges support by the Austrian Science Fund (FWF), Project F5502-N26,
which is a part of the Special Research Program ``Quasi Monte Carlo methods: Theory and Applications''.
Moreover, the author wishes to acknowledge support by the project MuDeRa,
which is a joint project between the FWF (I-1751-N26) and the ANR (Agence Nationale de la Recherche, France, ANR-14-CE34-0009).
}
\keywords{Thue--Morse sequence,
level of distribution,
Bombieri--Vinogradov theorem,
Elliott--Halberstam conjecture,
arithmetic progression,
Piatetski-Shapiro sequence,
simply normal sequence,
Gelfond problem
}
\subjclass[2010]{Primary 11N69, 11B25, 11B85; Secondary 11A63, 11K16, 11B83}



\begin{abstract}
The level of distribution of a complex valued sequence $b$ measures ``how well $b$ behaves'' on arithmetic progressions $nd+a$.
Determining whether $\theta$ is a level of distribution for $b$ involves summing a certain error over $d\leq D$, where $D$ depends on $\theta$;
this error is given by comparing a finite sum of $b$ along $nd+a$ and the expected value of the sum.
We prove that the Thue--Morse sequence has level of distribution $1$, which is essentially best possible.
More precisely, this sequence gives one of the first nontrivial examples of a sequence satisfying a Bombieri--Vinogradov type theorem for each exponent $\theta<1$.
In particular, this result improves on the level of distribution $2/3$ obtained by M\"ullner and the author.

As an application of our method, we show that the subsequence of the Thue--Morse sequence indexed by $\lfloor n^c\rfloor$, where $1<c<2$, is simply normal. That is, each of the two symbols appears with asymptotic frequency $1/2$ in this subsequence.
This result improves on the range $1<c<3/2$ obtained by M\"ullner and the author and closes the gap that appeared when Mauduit and Rivat proved (in particular) that the Thue--Morse sequence along the squares is simply normal.
In the proofs, we reduce both problems to an estimate of a certain Gowers uniformity norm of the Thue--Morse sequence similar to that given by Konieczny (2017).

\end{abstract}
\maketitle
\section{Introduction}
The Thue--Morse sequence $\bt$ is one of the most easily defined automatic sequences.
Like any automatic sequence, it can be defined using a constant-length substitution over a finite alphabet: $\bt$ is the unique fixed point of the substitution $\tO\mapsto\tO\tL$, $\tL\mapsto\tL\tO$ that starts with $\tO$.
Therefore $\bt=\tO\tL\tL\tO\tL\tO\tO\tL\tL\tO\tO\tL\tO\tL\tL\tO\ldots$.
Alternatively, this sequence can be defined using the \emph{binary sum-of-digits function} $s$, which counts the number of $\tL$s in the binary expansion of a nonnegative integer $n$:
we have $\bt(n)=\tO$ if and only if $s(n)\equiv 0\bmod 2$.
The equivalence of these two definitions can be proved via a third description:
start with the one-element sequence $\bt^{(0)}\coloneqq(\tO)$ and define $\bt^{(n+1)}$ by concatenating $\bt^{(n)}$ and the Boolean complement $\neg\bt^{(n)}$. Then $\bt$ is the pointwise limit of these finite sequences.
In this work, we will adapt the second viewpoint.
In fact, in the proofs we will work with the sequence $(-1)^{s(n)}$ instead of $\bt$.
For an overview on the Thue--Morse sequence, we refer the reader to the article by Allouche and Shallit~\cite{AS1999}, which points out occurrences of this sequence in different fields of mathematics and offers a good bibliography.
Moreover, we wish to mention the paper~\cite{M2001} by Mauduit.
For a comprehensive account of automatic and morphic sequences, see the book~\cite{AS2003} by Allouche and Shallit.

The main topic of this article is the study of $\bt$ along arithmetic progressions and, more generally, along Beatty sequences $\lfloor n\alpha+\beta\rfloor$.
This topic can be traced back at least to Gelfond~\cite{G1968}, who proved the following theorem on the base-$q$ sum-of-digits function $s_q$.
\begin{introtheorem}{A}[Gelfond]\label{thm_gelfond}
Let $q,m,d,b,a$ be integers and $q,m,d\geq 2$.
Suppose that $\gcd(m,q-1)=1$.
Then
\begin{equation*}
\bigl \lvert\{
1\leq n\leq x: n\equiv a\bmod d, s_q(n)\equiv b\bmod m
\}\bigr \rvert
=
\frac{x}{dm}+\mathcal O\bigl(x^\lambda\bigr)
\end{equation*}
for some $\lambda<1$ not depending on $x,d,a$, and $b$.
\end{introtheorem}
We are particularly interested in the error term for \emph{sparse} arithmetic progressions, having large common difference $d$.
This leads us directly to the other main concept of this paper, the notion of \emph{level of distribution}.
Very roughly speaking, the level of distribution is a measure of how well a given sequence behaves on arithmetic progressions.
A formal definition is given by Fouvry and Mauduit~\cite{FM1996b}, for example, which we adapt here.
\begin{definition}
Let $c=(c_n)_{n\geq 0}$ be a sequence of complex numbers, and for each integer $d\geq 1$ let $\mathcal Q(d)$ and $\mathcal R(d)\neq\emptyset$ be subsets of $\mathbb Z/d\mathbb Z$ such that $\mathcal Q(d)\subseteq \mathcal R(d)$.
The sequence $c$ has \emph{level of distribution} $\theta$ with respect to $\mathcal Q$ and $\mathcal R$ if
for all $\varepsilon>0$ and $A>0$ we have for all $x\geq 1$
\begin{multline*}
\sum_{1\leq d\leq x^{\theta-\varepsilon}}
\max_{0\leq y\leq x}
\max_{\substack{0\leq a<d\\a+d\mathbb Z\in \mathcal Q(d)}}
\Biggl \lvert
\sum_{\substack{0\leq n<y\\n\equiv a \bmod d}}c_n
-\frac 1{\lvert \mathcal R(d)\rvert}
\sum_{\substack{0\leq n<y\\n+d\mathbb Z\in \mathcal R(d)}}c_n
\Biggr \rvert\\
=\mathcal O\Biggl(\Biggl(\sum_{0\leq n<x}\lvert c_n\rvert\Biggr)(\log 2x)^{-A}\Biggr).
\end{multline*}
The implied constant may depend on $A$ and $\varepsilon$.
Moreover, in this definition the maximum over the empty index set is defined as $0$.
\end{definition}
The level of distribution (also called \emph{exponent of distribution} by some authors) is an important concept in sieve theory.
As a striking application, a variant of this concept was used in the ``bounded gaps between primes'' paper by Zhang~\cite{Z2014}. 
For more information on this subject, we refer the reader to the survey by Kontorovich~\cite{K2014}.
Moreover, we wish to draw the attention of the reader to the book~\cite{FI2010}  on sieve theory by Friedlander and Iwaniec, in particular Chapter~22 on the level of distribution.

We are ready to present our main result.
\begin{theorem}\label{thm_intromain}
The Thue--Morse sequence has level of distribution $1$ with respect to $\mathcal Q$ and $\mathcal R$ given by $\mathcal Q(d)=\mathcal R(d)=\mathbb Z/d\mathbb Z$.
More precisely, for all $\varepsilon>0$ we have

\[
\sum_{1\leq d\leq D}
\max_{0\leq y\leq x}
\max_{0\leq a<d}
\Biggl \lvert
\sum_{\substack{0\leq n<y\\n\equiv a\bmod d}}
(-1)^{s(n)}
\Biggr \rvert
=\mathcal O(x^{1-\eta})
\]
for some $\eta>0$ depending on $\varepsilon$, where $D=x^{1-\varepsilon}$.
\end{theorem}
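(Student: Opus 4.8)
The plan is to treat each modulus $d$ separately, reducing the double maximum to a power-saving bound for the Thue--Morse sequence \emph{along the single progression} $n\equiv a\pmod d$, and only afterwards to sum over $d$.

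First, the term $\frac1d\sum_{0\le n<y}(-1)^{s(n)}$ that is subtracted in the definition of the level of distribution is $\mathcal O(1/d)$ here, since $\sum_{0\le n<y}(-1)^{s(n)}\in\{-1,0,1\}$ for all $y$ (a classical fact, proved by splitting the binary expansion of $y$); it contributes $\mathcal O(\log 2x)$ in total and may be discarded, as in the statement of Theorem~\ref{thm_intromain}. Next, factoring $d=2^\nu d'$ with $d'$ odd and using $s(a+2^\nu m)=s(a\bmod 2^\nu)+s(m)$ to split off the low digits, the inner double maximum for the modulus $d$ is reduced to that for the odd modulus $d'$, with a bound of the same shape $(x/d)^{1-\delta}$. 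For odd $d$ we have
\[
\sum_{\substack{0\le n<y\\ n\equiv a\bmod d}}(-1)^{s(n)}=\sum_{0\le m<M}(-1)^{s(a+dm)},\qquad M=\Bigl\lceil\tfrac{y-a}{d}\Bigr\rceil\le\tfrac xd+1,
\]
so everything comes down to proving
\[
\biggl\lvert\,\sum_{0\le m<M}(-1)^{s(a+dm)}\,\biggr\rvert\ll M^{1-\delta}
\]
for some $\delta=\delta(\varepsilon)>0$, uniformly in $M\ge1$, in $0\le a<d$, and in odd $d$. Because $M^{1-\delta}$ is increasing in $M$, the maximum over $y$ is then free, and one gets $\sum_{1\le d\le D}\max_y\max_a\lvert\,\cdot\,\rvert\ll\sum_{d\le D}(x/d)^{1-\delta}\ll x^{1-\delta}D^{\delta}=x^{1-\delta\varepsilon}$, which is the theorem with any $\eta<\delta\varepsilon$.

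It is crucial that the saving be measured against the length $M\approx x/d$ of the progression, not against $x$. If one completed the sum, $\sum_{n<y,\,n\equiv a(d)}(-1)^{s(n)}=d^{-1}\sum_{h=0}^{d-1}\e(-ah/d)\sum_{n<y}(-1)^{s(n)}\e(nh/d)$, then even the best possible estimate $\sup_\theta\bigl\lvert\sum_{n<N}(-1)^{s(n)}\e(n\theta)\bigr\rvert\ll N^{1-c'}$, with $c'=1-\log_4 3>0$, would give only $\ll x^{1-c'}$ per modulus and hence $\ll x^{2-\varepsilon-c'}$ over all $d\le D$ --- far too large. Instead I would work directly with the sequence $g(m)=(-1)^{s(a+dm)}$ on $[0,M)$. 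Van der Corput's inequality, iterated a bounded number $r$ of times, bounds $\bigl\lvert\sum_{m<M}g(m)\bigr\rvert^{2^r}$ by $M^{2^r}$ times an averaged correlation
\[
\frac1{M^{\,r+1}}\sum_{m,h_1,\dots,h_r\in[0,M)}\ \prod_{\omega\in\{0,1\}^{r}}(-1)^{s\bigl(a+d(m+\omega\cdot h)\bigr)}
\]
plus terms of strictly smaller order; after the substitution $n=a+dm$, $k_i=dh_i$, this is exactly a Gowers-$U^{r}$-type average for the Thue--Morse sequence restricted to the progression $a+d\mathbb Z$, with increments running through multiples of $d$.

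The main obstacle is to show that this correlation average is $\ll M^{-c}$ with $c=c(r)>0$ \emph{uniform in the progression}, i.e.\ independent of $a$ and $d$; one then takes $\delta=c/2^r$. For $d=1$ this is Konieczny's estimate $\lVert(-1)^{s}\rVert_{U^{r}[N]}\ll N^{-c}$, and the task is to extend it to all arithmetic progressions with the decay rate bounded away from $0$ as $d$ varies. I would do this by combining Konieczny's mechanism --- which exploits the $2$-automatic structure of $(-1)^{s}$ via a transfer operator acting on the relevant correlation sums under $n\mapsto 2n+\epsilon$ --- with a Mauduit--Rivat-type splitting of the binary expansion of $a+dm$ at a suitable intermediate level $\lambda$: on each residue class of $m$ modulo $2^{\lambda}$ this rewrites $(-1)^{s(a+dm)}$ as a fixed sign times $(-1)^{s(du+c_\rho)}$, where $m=2^{\lambda}u+\rho$, reducing to Thue--Morse sums along progressions still of difference $d$ but shorter, the carries caused by the additive shifts $a$ and $d(\omega\cdot h)$ affecting only a negligible proportion of the variables (a carry-propagation lemma). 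Iterated against the automaton recursion, this should produce the required $d$-uniform decay. The only threat to uniformity is the resonance of $(-1)^{s}$ near fractions with small odd denominator --- $1/3$ being extremal, responsible for the genuine growth $\asymp M^{\log_4 3}$ of $\sum_{m<M}(-1)^{s(3m)}$ --- but this merely caps $\delta$ at $1-\log_4 3$ rather than obstructing the method; keeping the exponent away from $0$ uniformly as $d$ grows --- in particular controlling how many periods of the doubling orbit of $h/d$ can fit into the roughly $\log_2 x$ available binary digits --- is the technical core.
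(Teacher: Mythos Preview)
Your reduction is clean and correct up to the point where you assert the uniform individual bound
\[
\Bigl\lvert\sum_{0\le m<M}(-1)^{s(a+dm)}\Bigr\rvert\ll M^{1-\delta}
\]
with $\delta>0$ independent of $d$ and $a$. This is precisely the statement you do not prove, and it is not a technicality: it is strictly stronger than the theorem, and the paper does \emph{not} establish it. Your van der Corput iteration produces the correlation
\[
\frac{1}{M^{r+1}}\sum_{m,h_1,\dots,h_r<M}\ \prod_{\omega\in\{0,1\}^r}(-1)^{s(a+d(m+\omega\cdot h))},
\]
which is a $U^r$-type average \emph{restricted to the progression $a+d\mathbb Z$ with increments in $d\mathbb Z$}. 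Konieczny's estimate controls only the unrestricted Gowers norm, and there is no general inverse theorem passing from the latter to the former with a $d$-uniform exponent. The ``Mauduit--Rivat-type splitting plus automaton recursion'' you invoke is exactly the hard part you have relocated rather than solved; you say so yourself when you call it ``the technical core''.

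The paper takes a genuinely different route that sidesteps the individual bound altogether. It keeps the sum over $d$ throughout (Proposition~\ref{prp_1}) and applies the generalized van der Corput inequality with multipliers $K_i$ chosen, via Dirichlet/Farey approximation of $d/2^{j\mu}$, so that each step cuts away $\mu$ binary digits. After $m$ steps this reduces to the \emph{unrestricted} truncated Gowers norm of Proposition~\ref{prp_gowers}, to which Konieczny's argument applies directly. The average over $d$ is used three times in an essential way: to bound the mean discrepancy of $n\alpha$-sequences (Lemma~\ref{lem_mean_discrepancy}), to show that the exceptional $d$ for which the Farey numerators $\mathfrak p_i$ have large $2$-valuation are sparse (Lemma~\ref{lem_exceptions}), and to equidistribute $\lfloor r_0\alpha/2^{(m+1)\mu}\rfloor\bmod 2^\rho$. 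Each of these steps can fail for an individual $d$; it is the averaging that makes them go through. Your plan offers no substitute for this mechanism.
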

Before presenting some history, we wish to say a word about the proof: we are going to reduce the problem to the estimation of a certain \emph{Gowers uniformity norm} of the Thue--Morse sequence.
These expressions appear by repeated application of van der Corput's inequality and have the form
\[
\sum_{\substack{0\leq n<2^\rho\\0\leq r_1,\ldots,r_m< 2^\rho}}
\prod_{\varepsilon\in\{0,1\}^m}
(-1)^{s_\rho(n+\varepsilon\cdot r)},
\]
where $\varepsilon\cdot r=\sum_{1\leq i\leq m}\varepsilon_ir_i$ and $s_\rho$ is the truncated sum-of-digits function in base $2$ defined by $s_\rho(n)=s(n\bmod 2^\rho)$.
The proof of a very similar statement was given recently by Konieczny~\cite{K2017}, and we use ideas from that paper to prove our estimate.

In order to put Theorem~\ref{thm_intromain} into context, we present some related theorems.
The well-known Bombieri--Vinogradov theorem concerns the level of distribution of the von Mangoldt function $\Lambda$, which is defined by $\Lambda(n)=\log p$ if $n=p^k$ for some prime $p$ and some $k\geq 1$ and $\Lambda(n)=0$ otherwise.
This theorem states that $\Lambda$ has level of distribution $1/2$ with respect to $\mathcal Q$ and $\mathcal R$ given by $\mathcal Q(d)=\mathcal R(d)=(\mathbb Z/d\mathbb Z)^*$.
\begin{introtheorem}{B}[Bombieri--Vinogradov]\label{thm_BV}
Let $d\geq 1$ and $a$ be integers and define
\[ \psi(x;d,a) = \sum_{\substack{1\leq n\leq x\\n\equiv a\bmod d}}\Lambda(n). \]
For all real numbers $A>0$ there exist $B>0$ and a constant $C$ such that setting $D=x^{1/2}(\log x)^{-B}$ we have for all $x\geq 2$
\[
\sum_{1\leq d\leq D}
\max_{1\leq y\leq x}
\max_{\substack{0\leq a<d\\\gcd(a,d)=1}}
\left\lvert\psi(y;d,a)-\frac y{\varphi(d)}\right\rvert
\leq Cx(\log x)^{-A}.
\]
Here $\varphi$ denotes Euler's totient function.
\end{introtheorem}
No improvement on the level of distribution $1/2$ in this theorem is currently known; meanwhile the Elliott--Halberstam conjecture~\cite{EH1970} states that we can choose $D=x^{1-\varepsilon}$ for any $\varepsilon>0$. That is, it is conjectured that the primes have level of distribution $1$.
Improvements on the exponent $1/2$ exist for certain sequences of integers;
we refer to the articles~\cite{F1982,F1984} by Fouvry, by Fouvry and Iwaniec~\cite{FI1980} and by Friedlander and Iwaniec~\cite{FI1985}.
Moreover, we note the series~\cite{BFI1986,BFI1987,BFI1989} by Bombieri, Friedlander and Iwaniec concerning these questions.
In this context, we also note the result of Goldston, Pintz, and Y\i ld\i r\i m~\cite{GPY2009}, who showed in particular the following conditional result: if the primes have level of distribution $\theta$ for some $\theta>1/2$, there is a constant $C$ such that $p_{n+1}-p_n<C$ infinitely often, where $p_n$ is the $n$-th prime.
In a groundbreaking paper we mentioned before, Zhang~\cite{Z2014}
used the Goldston--Pintz--Y\i ld\i r\i m method and a variant of the Bombieri--Vinogradov theorem to prove the above result unconditionally.
Maynard~\cite{M2015} later proved the bounded gaps result using only the classical Bombieri--Vinogradov theorem.

Improvements on the level $1/2$ are also known for the sum-of-digits function modulo $m$.
Fouvry and Mauduit~\cite{FM1996} established $0.5924$ as level of distribution of the Thue--Morse sequence, with respect to $\mathcal Q$ and $\mathcal R$, where $\mathcal Q(d)=\mathcal R(d)=\mathbb Z/d\mathbb Z$.
\begin{introtheorem}{C}[Fouvry--Mauduit]\label{thm_FM1996}
Set
\[
A(x;d,a)=\bigl \lvert\bigl\{0\leq n<x:\bt(n)=\tO, n\equiv a\bmod d\bigr\}\bigr \rvert.
\]
Then
\begin{equation}\label{eqn_FM96}
\sum_{1\leq d\leq D}
\max_{1\leq y\leq x}
\max_{0\leq a<d}
\Bigl \lvert  A(y;d,a)-\frac y{2d} \Bigr \rvert \leq Cx(\log 2x)^{-A}
\end{equation}
for all real $A$ and $D=x^{0.5924}$.
\end{introtheorem}
More generally, for $m\geq 2$ they also study the sum-of-digits function in base $2$ modulo $m$, obtaining the weaker level of distribution $0.55711$.
Using sieve theory, they apply this result to the study of the sum of digits modulo $m$ of numbers having at most two prime factors.
Later, Mauduit and Rivat~\cite{MR2010}, in an important paper, managed to treat the sum of digits modulo $m$ of prime numbers, thereby answering one of the questions posed by Gelfond~\cite{G1968}.

M\"ullner and the author~\cite{MS2017} improved the exponent $0.5924$ to $2/3-\varepsilon$, thereby establishing $2/3$ as an admissible level of distribution of the Thue--Morse sequence.

Fouvry and Mauduit~\cite{FM1996b} also considered, more generally, the sum-of-digits function $s_q$ in base $q$ modulo an integer $m$ such that $\gcd(m,q-1)=1$.
They obtain the remarkable result that the level of distribution approaches $1$ as the base $q$ gets larger.

\begin{introtheorem}{D}[Fouvry--Mauduit]\label{thm_FM1996b}
Let $q\geq 2$, $m\geq 1$ and $b$ be integers such that $\gcd(m,q-1)=1$.
Then for all $A$ and $\varepsilon>0$ we have for all $x\geq 1$
\begin{equation*}
\sum_{1\leq d\leq x^{\theta_q-\varepsilon}}
\max_{0\leq y\leq x}
\max_{0\leq a<d}
\Biggl \lvert
  \sum_{\substack{n<y,s_q(n)\equiv b\bmod m\\n\equiv a\bmod d}}1-
\frac 1d\sum_{n<y,s_q(n)\equiv b\bmod m}1\Biggr \rvert =\mathcal O(x(\log 2x)^{-A}),
\end{equation*}
where $\theta_q\rightarrow 1$ as $q\rightarrow\infty$.
The implied constant depends at most on $m$, $q$, $A$ and $\varepsilon$.
\end{introtheorem}
As an application of this theorem, they study the sum
$\sum_{n<x,s_q(n)\equiv b\bmod m}\Lambda_k(n)$, where $\Lambda_k$ is the generalized von Mangoldt function of order $k\geq 1$ (\cite[Corollaire~2]{FM1996b}).

Theorem~\ref{thm_FM1996b} motivates us to ask which sequences have level of distribution equal to $1$.
In the above-cited paper by Fouvry and Mauduit~\cite{FM1996b}, for example, a list of sequences having this property is given.
Moreover, we note~\cite[Chapter~22.3]{FI2010}, which studies the level of distribution for additive convolutions, giving further examples.
However, in these examples, other than the trivial example $c_n=1$ for all $n$,
the maximum over $a$ does not play a r\^ole: the set $\mathcal Q(d)$ consists of at most one element.

We are interested in sequences $c$ having level of distribution $1$ and such that the set $\mathcal Q(d)$ contains ``many'' residue classes.
In other words, we want to find analogues of the Elliott--Halberstam conjecture.
Requiring monotonicity of $c$, examples can be constructed easily:
$c(n)=n$ is such an example, and more generally, increasing sequences $c$ satisfying certain growth conditions have this property.
Apart from such ``trivial'' sequences, no other examples seem to be known.
Our Theorem~\ref{thm_intromain}, giving such an example, is therefore of considerable significance.

Moreover, we note that our method can certainly be adapted to $s_q(n)\bmod m$ for general bases $q\geq 2$ and $m\geq 1$, which yields $\theta_q=1$ in Theorem~\ref{thm_FM1996b}.

The second focus of this paper concerns \emph{Piatetski-Shapiro sequences},
which are sequences of the form $(\lfloor n^c\rfloor)_{n\geq 0}$ for some $c\geq 1$.
For stating the second main theorem, we do not need additional preparation.
\begin{theorem}\label{thm_intronc}
Let $1<c<2$.
The Thue--Morse sequence along $\lfloor n^c\rfloor$ is simply normal.
That is, each of the letters $\tO$ and $\tL$ appears with asymptotic frequency $1/2$ in $n\mapsto\bt(\lfloor n^c\rfloor)$.
\end{theorem}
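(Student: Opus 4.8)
The plan is to reduce the simple normality of $\bt$ along $\lfloor n^c\rfloor$ to exponential-sum estimates for $(-1)^{s(n)}$, exactly as in the strategy behind Theorem~\ref{thm_intromain}, and then to exploit the level-of-distribution input. Concretely, I would count $\lvert\{n<N:\bt(\lfloor n^c\rfloor)=\tO\}\rvert$ and show it equals $N/2+o(N)$; equivalently, it suffices to prove
\[
\sum_{0\leq n<N}(-1)^{s(\lfloor n^c\rfloor)}=o(N).
\]
The classical device for Piatetski-Shapiro sequences is the identity, valid for $1<c<2$ with $\gamma=1/c$, that detects $m=\lfloor n^c\rfloor$ via the condition $\lfloor -(m+1)^\gamma\rfloor-\lfloor -m^\gamma\rfloor=1$ (a ``Fejér--van der Corput'' type indicator). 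Summation by parts against this indicator converts $\sum_{n<N}(-1)^{s(\lfloor n^c\rfloor)}$ into
\[
\sum_{m<M}(-1)^{s(m)}\bigl(\lfloor -(m+1)^\gamma\rfloor-\lfloor -m^\gamma\rfloor\bigr)+(\text{negligible}),
\]
where $M\asymp N^c$. Expanding the fractional-part functions into a main term plus a Fourier series $\sum_{h\neq 0}c_h\,\e(h\,m^\gamma)$ (with a careful treatment of the small denominators via a truncated series and an error term, as in Mauduit--Rivat) leaves us to bound sums of the shape $\sum_{m<M}(-1)^{s(m)}\e(h\, m^\gamma)$ for $1\leq |h|\leq H$.

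The heart of the argument is then a bound for these mixed sums. I would split the range $[0,M)$ into dyadic (or, better, into $2^\rho$-length) blocks and, on each block, separate the digits of $m$ below and above a truncation level $\lambda$: write $m=u+2^\lambda v$, so that $s(m)=s_\lambda(u)+s(v)$ up to a carry that is controlled by restricting $u$ away from the top of its range. The factor $(-1)^{s(v)}$ is then constant in the inner variable $u$, while $\e(h(u+2^\lambda v)^\gamma)$ is, on each block of length $2^\lambda$, well approximated by a low-degree polynomial phase in $u$ (Taylor expansion, since $\gamma<1$ makes higher derivatives small). Summing over $u$ one applies Weyl/van der Corput differencing to the smooth phase, gaining a power of $2^\lambda$; after that the remaining sum over $v$ is a short sum of $\pm1$'s bounded trivially, or — to do better and really gain — one feeds in the Gowers-norm / van der Corput estimate for $(-1)^{s}$ alluded to in the introduction (the expression with $\prod_{\varepsilon\in\{0,1\}^m}(-1)^{s_\rho(n+\varepsilon\cdot r)}$), which after $m$-fold differencing of the Thue--Morse factor alone produces a power saving independent of the phase. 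Balancing the truncation level $\lambda$, the differencing depth $m$, and the Fourier cutoff $H$ against $M=N^c$ yields $\sum_{m<M}(-1)^{s(m)}\e(h\,m^\gamma)=O(M^{1-\delta})$ with $\delta=\delta(c)>0$, uniformly for $h$ in the relevant range; since we need $c<2$ precisely so that $\gamma>1/2$ and the van der Corput gain on the polynomial phase is not overwhelmed by the length of the $h$-sum, this is where the hypothesis is used.

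The main obstacle I expect is the same one that separates $c<3/2$ from $c<2$: controlling the interaction between the digital structure of $(-1)^{s(m)}$ and the analytic phase $\e(h\,m^\gamma)$ when $\gamma$ is only slightly above $1/2$. A naive van der Corput treatment of the phase alone stalls at $c=3/2$; the improvement must come from using the Thue--Morse factor actively — either by differencing it together with the phase (so that the $2^m$ terms $(-1)^{s_\rho(m+\varepsilon\cdot r)}$ partially cancel while the phase differences become small) and then invoking Konieczny-type uniformity-norm bounds, or by the carry-propagation decomposition above combined with the level-of-distribution estimate of Theorem~\ref{thm_intromain} applied to the blocks where $m^\gamma$ is close to an arithmetic progression. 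Making the error terms from truncating the Fourier expansion, from the carry corrections, and from the Taylor approximation of $m^\gamma$ all simultaneously summable against the $H$-sum is delicate, but it is a finite bookkeeping problem once the key bilinear/uniformity estimate is in hand; I would therefore isolate that estimate as a separate lemma and reduce Theorem~\ref{thm_intronc} to it, deducing simple normality by letting $N\to\infty$ after optimizing all parameters.
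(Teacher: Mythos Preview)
Your approach diverges substantially from the paper's. The paper does \emph{not} pass to sums of the form $\sum_m (-1)^{s(m)}\e(hm^\gamma)$ via the Piatetski--Shapiro indicator. Instead, following~\cite{MS2017}, it linearizes directly in the $n$-variable: on a short interval around $n_0$ one has $\lfloor n^c\rfloor = \lfloor n\alpha+\beta\rfloor$ with $\alpha\approx c\,n_0^{c-1}$ and a suitable shift $\beta$, the error being $O(n_0^{c-2}L^2)$ on an interval of length $L$, so one may take $L\asymp n_0^{(2-c)/2}$. Summing these pieces converts the problem into an \emph{integral} over real $\alpha$ of Beatty-sequence sums, with a supremum over $\beta$ taken inside; this is exactly Theorem~\ref{thm_2} (equivalently Proposition~\ref{prp_2}), and the paper emphasizes that having $\max_\beta$ inside the integral is essential. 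The Gowers-norm estimate (Proposition~\ref{prp_gowers}) then enters only after repeated van der Corput on the Beatty-sequence sum, where the argument of $s$ is already linear in $n$ and no curved phase ever appears. The hypothesis $c<2$ is used precisely to ensure that the linearization intervals have length tending to infinity.

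Your main line has a genuine gap at the point you yourself flag. After the digit split $m=u+2^\lambda v$, the inner sum is not a Weyl sum but $\sum_u (-1)^{s_\lambda(u)}\e(\text{polynomial in }u)$; this mixed digital--analytic object is exactly the obstruction that limited~\cite{MR2005} to $c<7/5$, and ``feeding in the Gowers-norm estimate'' does not resolve it as stated. Proposition~\ref{prp_gowers} bounds pure correlations $\sum_n\prod_\varepsilon(-1)^{s_\rho(n+\varepsilon\cdot r)}$ with no extraneous phase, whereas differencing $\sum_m (-1)^{s(m)}\e(hm^\gamma)$ leaves a residual phase at each step; no mechanism in your sketch removes it. Your parenthetical alternative (``blocks where $m^\gamma$ is close to an arithmetic progression'') is closer in spirit to the paper, but invoking Theorem~\ref{thm_intromain} is insufficient: the slopes produced by the linearization are genuinely real and vary continuously, so one needs the continuous Beatty-sequence statement Theorem~\ref{thm_2}, not the arithmetic-progression version.
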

By the argument given in our earlier paper~\cite{MS2017} with M\"ullner, this theorem is proved via a Beatty sequence variant of Theorem~\ref{thm_intromain}.
That theorem in turn is proved by arguments analogous to the arguments in the proof of Theorem~\ref{thm_intromain}, and reduces to the same estimate of the Gowers uniformity norm of Thue--Morse.
Theorem~\ref{thm_intronc} is therefore an application of the method of proof of Theorem~\ref{thm_intromain}.

Again, we present some historical background.
Studying Piatetski--Shapiro subsequences of a given sequence can be seen as a step towards proving theorems on polynomial subsequences.
For example, it is unknown whether there are infinitely many primes of the form $n^2+1$; therefore it is of interest to consider primes of the form $\lfloor n^c\rfloor$ for $1<c<2$ and prove an asymptotic formula for the number of such primes.
Piatetski-Shapiro~\cite{P1953} proved such a formula for $1<c<12/11$, and the currently best known bound is $1<c<2817/2426$ due to Rivat and Sargos~\cite{RS2001}.
In an analogous way, the study of the sum-of-digits function along $\lfloor n^c\rfloor$ was motivated. It is another problem posed by Gelfond~\cite{G1968} to study the distribution of the sum of digits of polynomial sequences in residue classes.
Since this problem could not be solved at first, Mauduit and Rivat~\cite{MR1995, MR2005} considered $q$-multiplicative functions along $\lfloor n^c\rfloor$ (where a $q$-multiplicative function $f:\mathbb N\rightarrow\{z\in\mathbb C:\lvert z\rvert=1\}$ satisfies $f(aq^k+b)=f(aq^k)f(b)$ for nonnegative integers $a,b,k$ such that $b<q^k$)
and they obtained an asymptotic formula for $c<7/5$.
\begin{introtheorem}{E}[Mauduit--Rivat]\label{thm_MR2005}
Let $1<c<7/5$ and set $\gamma=1/c$.
For all $\delta\in(0,(7-5c)/9)$ there exists a constant $C>0$ such that for all $q$-multiplicative functions $f:\mathbb N\rightarrow\{z\in\mathbb C:\lvert z\rvert=1\}$ and all $x\geq 1$ we have
\[
\left\lvert
\sum_{1\leq n\leq x}f\left(\left\lfloor n^c\right\rfloor\right)
-\sum_{1\leq m\leq x^c}\gamma m^{\gamma-1}f(m)
\right\rvert
\leq Cx^{1-\delta}.
\]
\end{introtheorem}
Since the Thue--Morse sequence is $2$-multiplicative, it follows in particular that the the subsequence indexed by $\lfloor n^c\rfloor$ assumes each of the two values $\tO$, $\tL$ with asymptotic frequency $1/2$, as long as $1<c<7/5$.
This means that this subsequence is \emph{simply normal}.
In the paper~\cite{DDM2012} by Deshouillers, Drmota, and Morgenbesser, a statement as in Theorem~\ref{thm_MR2005} for arbitrary automatic sequences and $1<c<7/5$ is proved.

Some progress on Gelfond's question on polynomials was made by Drmota and Rivat~\cite{DR2005} and by Dartyge and Tenenbaum~\cite{DT2006};
finally, Mauduit and Rivat~\cite{MR2009} managed to answer Gelfond's question for the polynomial $n^2$.
This latter paper was generalized by Drmota, Mauduit and Rivat~\cite{DMR2018}, who showed that
in fact $\bt(n^2)$ defines a \emph{normal sequence}, by which we understand an infinite sequence on $\{\tO,\tL\}$ such that every finite sequence of length $k$ occurs as a factor (contiguous finite subsequence) with asymptotic frequency $2^{-k}$.
This result also generalizes a paper by Moshe~\cite{M2007} who showed that every finite word on $\{\tO,\tL\}$ occurs as a factor of $n\mapsto \bt(n^2)$ at least once.

However, the distribution of the sum of digits of $\lfloor n^c\rfloor$ in residue classes, for 
$c\in[1.4,2)$, 
remained an open problem.
Progress in this direction was made by the author~\cite{S2014},
who improved the bound on $c$ to $ 1<c\leq 1.42$ for the Thue--Morse sequence.
The key idea in that paper is to approximate $\lfloor n^c\rfloor$ by a Beatty sequence $\lfloor n\alpha+\beta\rfloor$ and thus reduce the problem to a linear one.

M\"ullner and the author~\cite{MS2017}, using the same linearization argument
and a Bombieri--Vinogradov type theorem for the Thue--Morse sequence on Beatty sequences, were able to extend this range to $1<c<3/2$.
Moreover, we could handle occurrences of factors in Piatetski-Shapiro subsequences of $\bt$, thus showing that $\bt\left(\lfloor n^c\rfloor\right)$ defines a normal sequence for $1<c<3/2$.
\begin{introtheorem}{F}[M\"ullner--Spiegelhofer]\label{thm_normality}
Let $1<c<3/2$. Then the sequence $\mathbf u=\bigl(\bt\left(\left\lfloor n^c\right\rfloor\right)\bigr)_{n\geq 0}$ is normal.
More precisely, for any $L\geq 1$ there exists an exponent $\eta>0$ and a constant $C$
such that
\[
\Bigl \lvert \left\lvert\bigl\{n<N:\mathbf u(n+i)=\omega_i\textrm{ for }0\leq i<L\bigr\}\right\rvert-N/2^L\Bigr \rvert
\leq CN^{1-\eta}
\]
for all  $\bigl(\omega_0,\ldots,\omega_{L-1}\bigr)\in\{\tO,\tL\}^L$.
\end{introtheorem}
This theorem also improved on an earlier result by the author~\cite{S2015}, who obtained normality for $1<c<4/3$, using an estimate for Fourier coefficients related to the Thue--Morse sequence provided by Drmota, Mauduit and Rivat~\cite{DMR2018}.

Our Theorem~\ref{thm_intronc} finally closes the gap in the set of exponents $c$ such that we have an asymptotic formula for Thue--Morse on $\lfloor n^c\rfloor$.
This gap appeared with the Mauduit--Rivat result on squares; at that time, the gap was     
$[1.4,2)$, 
now it was only left to close the smaller gap 
$[1.5,2)$.                                    

However, the case $c>2$ remains open for now, for $c\in\mathbb Z$ (which is contained in Gelfond's problem on polynomial subsequences) as well as for Piatetski-Shapiro sequences.
For example, it is a notorious open question to prove that $\tO$ occurs with frequency $1/2$ in $n\mapsto \bt(n^3)$.
(If this result is proved some day, there will be a new gap to be closed.)

Mauduit~\cite[Conjecture~1]{M2001} conjectures that
\[
\lim_{N\rightarrow\infty}\frac 1N
\left\{
1\leq n\leq N:s_q(\lfloor n^c\rfloor)\equiv b\bmod m
\right\}
=\frac 1m
\]
for almost all $c>1$, where $q\geq 2$, $m\geq 1$ and $b$ are integers.
While this almost-all result is known for $1<c<2$, as he notes just before this conjecture, we believe (as we noted before) that our method can be adapted to generalize our results to general sequences $s_q(n) \bmod m$ and thus to prove the asymptotic identity for all $c\in(1,2)$.
However, while we are confident that the asymptotic identity in Mauduit's conjecture holds for \emph{all} non-integer $c>1$, the case $c>2$ cannot yet be handled by our methods.

Moreover, we note that it would be interesting to generalize the normality result from Theorem~\ref{thm_normality} to all exponents $1<c<2$.
\begin{notation}
For a real number $x$, we write $\e(x)=\exp(2\pi i x)$, $\{x\}=x-\lfloor x\rfloor$, $\lVert x\rVert=\min_{n\in\mathbb Z}\lvert x-n\rvert$ and $\langle\cdot\rangle=\lfloor x+1/2\rfloor$ (the ``nearest integer'' to $x$).
For a prime number $p$ let $\nu_p(n)$ be the exponent of $p$ in the prime factorization of $n$.
We define the \emph{truncated binary sum-of-digits function}
\[    s_\lambda(n)\coloneqq s(n'),    \]
where $0\leq n'<2^\lambda$ and $n'\equiv n\bmod 2^\lambda$,
which is the $2^\lambda$-periodic extension of the restriction of $s$ to $\{0,\ldots,2^\lambda-1\}$.
For $\mu\leq \lambda$ we define the \emph{two-fold restricted binary sum-of-digits function}
\[    s_{\mu,\lambda}(n)=s_\lambda(n)-s_\mu(n).    \]
For a real number $x\geq 0$, we set
\[    \logp x=\max\left\{1,\log x\right\}.    \]
The symbol $\mathbb N$ denotes the set of nonnegative integers.
\end{notation}
\section{Results}
In order to (re)state our main theorem, we introduce some notation.
Let $\alpha,\beta,y$ and $z$ be nonnegative real numbers such that $\alpha\geq 1$.
We define
\[
A(y,z;\alpha,\beta)=\bigl \lvert\bigl\{y\leq m<z:\bt(m)=\tO\text{ and } \exists n\in\mathbb Z\text{ such that }m=\lfloor n\alpha+\beta\rfloor\bigr\}\bigr \rvert
.
\]
For integers $d=\alpha$ and $a=\beta$, we clearly have
\[
A(y,z;d,a)=\bigl \lvert\bigl\{y\leq m<z:\bt(m)=\tO\textrm{ and } m\equiv a\bmod d\bigr\}\bigr \rvert
.
\]

Our main theorem is the following result, analogous to the Elliott--Halberstam conjecture.
\begin{theorem}\label{thm_1}
The Thue--Morse sequence has level of distribution $1$.
More precisely, for all $\varepsilon>0$ there exist $\eta>0$ and $C$ such that
\[
\sum_{1\leq d\leq D}\max_{\substack{y,z\\0\leq y\leq z\\z-y\leq x}}\max_{0\leq a<d}
\left\lvert  A(y,z;d,a) - \frac{y}{2d} \right\rvert    \leq Cx^{1-\eta}
\]
for $x\geq 1$ and $D=x^{1-\varepsilon}$.
\end{theorem}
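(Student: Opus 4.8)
The plan is to linearise, reduce by repeated van der Corput differencing to a Gowers-uniformity-norm estimate for the Thue--Morse sequence of the kind announced in the introduction, and prove that estimate by a carry-and-digits analysis in the spirit of Konieczny~\cite{K2017}.

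\emph{Step 1 (reductions).} Since $[\bt(n)=\tO]=\tfrac12(1+(-1)^{s(n)})$, the summand $A(y,z;d,a)-\tfrac{y}{2d}$ splits into a main part contributing $O(1)$ for each pair $(d,a)$, hence $O(D)=O(x^{1-\varepsilon})$ after the sum over $d\le D$, plus $\tfrac12$ times a genuine error of $(-1)^{s}$ along $n\equiv a\bmod d$. So it is enough to prove
\[
S\;:=\;\sum_{1\le d\le D}\ \max_{0\le y\le x}\ \max_{0\le a<d}\ \Bigl|\sum_{\substack{0\le n<y\\ n\equiv a\bmod d}}(-1)^{s(n)}\Bigr|\;=\;O(x^{1-\eta}).
\]
Writing $d=2^{j}d'$ with $d'$ odd and $n=a_0+2^{j}n_1$, $a_0=a\bmod 2^{j}$, the identity $(-1)^{s(n)}=(-1)^{s(a_0)}(-1)^{s(n_1)}$ together with $\gcd(2,d')=1$ reduces the inner sum to one of the same shape over $n_1$ in one residue class modulo the \emph{odd} number $d'$, so one may assume $d$ odd. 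A dyadic decomposition then replaces $\max_{0\le y\le x}$ by $O(\log x)$ maxima over $y\in(2^{\lambda-1},2^{\lambda}]$, $\lambda=\log_2 x+O(1)$, on which $s=s_{\lambda}$, and the remaining maximum over partial sums is absorbed by a standard completion argument at the cost of a further factor $O(\log x)$. This leaves $\sum_{d\le D,\ d\text{ odd}}\max_{a}|T_{d,a}|$ with $T_{d,a}=\sum_{n\equiv a\bmod d}(-1)^{s_{\lambda}(n)}$, the sum restricted to a range $n<2^{\lambda}$.

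\emph{Step 2 (van der Corput).} For $d$ odd I would write $n=a+dk$, $0\le k<K$ with $K\asymp 2^{\lambda}/d\ge x^{\varepsilon}/2$, and apply van der Corput's inequality $m$ times in $k$, each shift running over an interval of length $R\asymp K$; since a shift of $k$ by $r$ moves $n$ by the multiple $dr$ of $d$, this bounds $|T_{d,a}/K|^{2^{m}}$ by $R^{-m}K^{-1}\sum_{0\le r_1,\dots,r_m<R}\bigl|\sum_{n\equiv a\bmod d}\prod_{\varepsilon\in\{0,1\}^{m}}(-1)^{s_{\lambda}(n+d\,\varepsilon\cdot r)}\bigr|$ plus a van der Corput ``diagonal'' term $K^{-c}$, where $\varepsilon\cdot r=\sum_{1\le i\le m}\varepsilon_i r_i$. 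The residue condition is then removed by detecting it with additive characters modulo $d$ and performing \emph{one more} van der Corput step, now in the free variable $n$ over the full interval of length $\asymp 2^{\lambda}$: the phase $\e(hn/d)$ cancels against $\e(-h(n+t)/d)$, so the ensuing bound involves neither $h$ nor $d$ and the factor $d$ from the character sum is recovered exactly. This is the step that lets the level of distribution reach $1$ rather than some $\theta<1$: no power of $d$ is lost. Iterating, $\max_{a}|T_{d,a}|^{2^{m+1}}$ is bounded by a complete Gowers box sum of $(-1)^{s}$ over $\mathbb Z/2^{\lambda}\mathbb Z$ whose $2^{m+1}$ generators are either multiples of $d$ or free, all of size $\ll 2^{\lambda}$, plus acceptable errors.

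\emph{Step 3 (the key estimate, and conclusion).} Everything then rests on the bound, for each fixed $m$,
\[
\sum_{\substack{0\le n<2^{\rho}\\ 0\le r_1,\dots,r_m<2^{\rho}}}\ \prod_{\varepsilon\in\{0,1\}^{m}}(-1)^{s_{\rho}(n+\varepsilon\cdot r)}\;=\;O_{m}\bigl(2^{(m+1)\rho}\,2^{-c_{m}\rho}\bigr),\qquad c_{m}>0,
\]
a periodic Gowers-uniformity estimate for Thue--Morse, needed moreover in a form stable under restricting the generators to the sublattice $d\mathbb Z$ (harmless since $d$ is odd, so $d\mathbb Z$ meets every residue modulo $2^{\rho}$). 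Although $(-1)^{s}$ is \emph{not} Gowers-uniform over $\mathbb Z$ --- it has a Fourier coefficient of size $\asymp N^{\log 3/\log 4}$ near $\vartheta=1/3$ and its autocorrelations do not tend to $0$ --- this obstruction is invisible to the $2^{\rho}$-grid because $1/3$ is not a dyadic rational, which is why the displayed sum does cancel. I would prove it following Konieczny~\cite{K2017}: expand the product, pass to the iterated discrete derivatives $\Delta_{r_1}\cdots\Delta_{r_j}(-1)^{s_{\rho}}$, analyse the base-$2$ carries created when the shifts $\varepsilon\cdot r$ are added to $n$, and show the contribution is small off a sparse set of tuples. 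To feed this back into Step~2 one descends to a modulus $2^{\rho}$ with $\rho\asymp\varepsilon\lambda$ (splitting $n$ into a high and a low part --- the high part drops out since $2^{m+1}$ is even --- and controlling carry corrections on average), so that $2^{-c_{m}\rho}$ is a fixed power $x^{-\eta_0}$ uniform in $d\le D$. Assembling the pieces then gives $\max_{a}|T_{d,a}|\ll_{\varepsilon}(2^{\lambda}/d)\,x^{-\eta_0}$, and summing over the $O((\log x)^2)$ dyadic blocks and over $d\le D$ yields $S\ll x^{1-\eta_0}(\log x)^{O(1)}=O(x^{1-\eta})$, which with Step~1 proves the theorem. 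I expect the main obstacle to be precisely the estimate above together with its coupling to Step~2: the Gowers cancellation is needed not only in the clean periodic form but with generators in short sublattice-intervals that shrink with $d$, and the van der Corput diagonal terms, the completion error and the carry corrections must all be kept below $x^{1-\eta}$ --- it is exactly this uniformity in the modulus that pushes the level of distribution up to $1$.
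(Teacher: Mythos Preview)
Your high-level architecture (iterated van der Corput, then a Gowers-norm estimate for $(-1)^{s}$ in the spirit of Konieczny) matches the paper's, and your character trick in Step~2 is a clean way to get rid of the residue class $a$. The gap is in Step~3, precisely at the point you yourself flag as ``the main obstacle''.

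After your $m$ van der Corput steps in $k$ and the extra one in $n$, the inner expression is a Gowers box sum over $\mathbb Z/2^{\lambda}\mathbb Z$ whose generators are $t$ (free, short) and $dr_1,\ldots,dr_m$, each ranging over an arithmetic progression of step $d$ and length $R\asymp 2^{\lambda}/d$. Your proposed ``descent to modulus $2^{\rho}$ by splitting $n$ into high and low parts, the high part dropping out since $2^{m+1}$ is even'' does \emph{not} work: the shifts $d\,\varepsilon\cdot r$ have size up to $\asymp 2^{\lambda}$, so the digits of $n+d\,\varepsilon\cdot r$ above position $\rho$ genuinely depend on $\varepsilon$ and $r$, and the parity argument you invoke only cancels a part that is \emph{constant} across the $2^{m+1}$ terms. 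Equally, ``$d$ odd, so $d\mathbb Z$ meets every residue mod $2^{\rho}$'' gives equidistribution only when $r_i$ runs over a full period; here $r_i$ runs over an interval of length $\asymp 2^{\lambda}/d$, which for $d$ near $D=x^{1-\varepsilon}$ is only $\asymp 2^{\varepsilon\lambda}$, far short of $2^{\lambda}$. So you are left needing a restricted Gowers estimate with generators in short sublattice intervals, uniformly in $d$, and you have not said how to get it.

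The paper's mechanism for exactly this point is different and is the heart of the argument. Instead of plain van der Corput (shift $k\mapsto k+r$, i.e.\ $n\mapsto n+dr$), one uses the \emph{generalized} inequality with shift $k\mapsto k+K_ir$, where $K_i$ is chosen by Dirichlet/Farey approximation so that $K_i d$ is close to $2^{(i+1)\mu}M_i$ for some integer $M_i$. The shift then (up to a discrepancy error) moves $n$ by a multiple of $2^{(i+1)\mu}$, so the lowest $\mu$ binary digits are unchanged; since the $2^{m+1}$-fold product has an even number of factors, those $\mu$ digits cancel and $s_\lambda$ may be replaced by $s_{\mu,\lambda}$. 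Iterating $m$ times cuts the window down from $\lambda$ to $\rho=\lambda-(m+1)\mu$, and the resulting generators $\mathfrak p_i$ are now integers of moderate size. The sum over $d$ is used \emph{essentially} at this stage (mean discrepancy, and a separate lemma bounding the set of $d$ for which some $\mathfrak p_i$ has large $2$-adic valuation), after which the expression becomes exactly the clean periodic Gowers norm over $\mathbb Z/2^{\rho}\mathbb Z$ to which Konieczny's argument applies. In short: the passage from ``generators in $d\mathbb Z$ of size $2^{\lambda}$'' to ``free generators of size $2^{\rho}$'' is achieved not by a high/low split of $n$, but by choosing the van der Corput multipliers $K_i$ so that the shifts are near dyadic multiples and then averaging over $d$.
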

Note that this theorem allows intervals 
$[y,z)$ 
for arbitrary $y\geq 0$, which is more general than our definition of a level of distribution.


Our second result concerns Piatetski-Shapiro subsequences of the Thue--Morse sequence.
\begin{theorem}\label{thm_nc}
Let $1<c<2$. Then the sequence $n\mapsto \bt(\lfloor n^c\rfloor)$ is simply normal.
More precisely, there exists an exponent $\eta>0$ and a constant $C$
such that
\[
\left\lvert\frac 1N\bigl \lvert\bigl\{0\leq n<N:\bt(\lfloor n^c\rfloor)=\tO\bigr\}\bigr \rvert-\frac 12\right\rvert
\leq CN^{-\eta}.
\]
\end{theorem}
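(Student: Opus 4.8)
The plan is to reduce Theorem~\ref{thm_nc} to the Beatty-sequence refinement of Theorem~\ref{thm_1}, following the linearization strategy from~\cite{S2014} and~\cite{MS2017}. First I would fix $1<c<2$, set $\gamma=1/c\in(1/2,1)$, and observe that $m=\lfloor n^c\rfloor$ for some integer $n\in[1,N)$ precisely when $\lceil m^\gamma\rceil=\lceil(m+1)^\gamma\rceil$ fails, i.e.\ the indicator of the Piatetski-Shapiro set can be written as $\lfloor -m^\gamma\rfloor-\lfloor-(m+1)^\gamma\rfloor$ (a standard device; see~\cite{P1953}). Thus
\[
\bigl\lvert\{0\le n<N:\bt(\lfloor n^c\rfloor)=\tO\}\bigr\rvert
=\sum_{0\le m<M}\mathbf 1[\bt(m)=\tO]\,\bigl(\lfloor-m^\gamma\rfloor-\lfloor-(m+1)^\gamma\rfloor\bigr)+O(1),
\]
with $M=N^c$. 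The main point is that on a short block of $m$ of length roughly $M^{1-\delta}$ the function $-m^\gamma$ is well approximated by a linear polynomial $m\alpha+\beta$ with $\alpha$ of size about $M^{\gamma-1}=N^{1-c}$; so after splitting $[0,M)$ into such blocks and controlling the approximation error by a van der Corput / exponential-sum argument exactly as in~\cite[Section~?]{MS2017}, the count becomes, up to an admissible error, a sum over blocks of expressions of the shape $A(y,z;\alpha,\beta)$ for varying real $\alpha,\beta$ with $\alpha\asymp N^{1-c}$ and $z-y$ bounded.

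Next I would invoke the Beatty-sequence version of our level-of-distribution result. Concretely, the argument of~\cite{MS2017} shows that Theorem~\ref{thm_1} upgrades (by the same proof, reducing to the same Gowers-norm estimate for $(-1)^{s(n)}$) to a statement of the form: for every $\varepsilon>0$ there are $\eta>0$ and $C$ with
\[
\sum_{1\le d\le D}\ \max_{\substack{y,z\\0\le y\le z\\ z-y\le x}}\ \max_{0\le a<d}\ \max_{0\le\beta<1}
\Bigl\lvert A(y,z;d+\beta,a+\beta)-\tfrac{\mathrm{(main\ term)}}{}\Bigr\rvert\le Cx^{1-\eta},
\]
i.e.\ $\alpha,\beta$ may be taken real. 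Feeding $\alpha\asymp N^{1-c}$, which lies well below the threshold $D=M^{1-\varepsilon}=N^{c-c\varepsilon}$ for any fixed small $\varepsilon$ since $1-c<0<c-c\varepsilon$, this gives cancellation in $\sum_{0\le m<z}\mathbf 1[\bt(m)=\tO, m=\lfloor n\alpha+\beta\rfloor]$ against its expected value $\tfrac12\lvert\{m<z:m=\lfloor n\alpha+\beta\rfloor\}\rvert$ for \emph{every} individual $(\alpha,\beta)$ up to a power saving. Summing these savings over all $O(M^\delta)$ blocks and collecting the error from the linear approximation yields
\[
\bigl\lvert\{0\le n<N:\bt(\lfloor n^c\rfloor)=\tO\}\bigr\rvert
=\tfrac12\sum_{0\le m<M}\bigl(\lfloor-m^\gamma\rfloor-\lfloor-(m+1)^\gamma\rfloor\bigr)+O(N^{1-\eta'})
=\tfrac N2+O(N^{1-\eta'}),
\]
since the telescoping sum equals $N+O(1)$. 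Dividing by $N$ gives the claimed bound $\lvert N^{-1}\lvert\{\dots\}\rvert-\tfrac12\rvert\le CN^{-\eta}$.

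The genuinely hard part is \emph{not} in this deduction — which is essentially the linearization machinery already present in~\cite{S2014,MS2017} — but in securing the Beatty-sequence version of Theorem~\ref{thm_1} with the full level $1$, and that in turn rests on the Gowers-uniformity-norm estimate for $(-1)^{s(n)}$ described in the introduction (the bound on $\sum_{n,r_1,\dots,r_m<2^\rho}\prod_{\varepsilon\in\{0,1\}^m}(-1)^{s_\rho(n+\varepsilon\cdot r)}$, in the spirit of~\cite{K2017}). Once that estimate is in hand, the passage to Beatty sequences requires re-running the completion-of-sums and van der Corput steps of the proof of Theorem~\ref{thm_1} while carrying the extra real parameter $\beta$ and the irrational slope; the constraint $\alpha<D$ must be tracked carefully so that the relevant range of $d$ (now $d\le N^{1-c}\cdot N^{o(1)}$, which is a \emph{small} power of $M$) stays inside the admissible window, but this is comfortable because $c<2$ forces $N^{1-c}\le N^{\,c-c\varepsilon}$ with room to spare. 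I would therefore expect the write-up of Theorem~\ref{thm_nc} itself to be short modulo two ingredients quoted from elsewhere in the paper: the Gowers-norm bound and the Beatty analogue of Theorem~\ref{thm_1}.
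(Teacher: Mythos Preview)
Your overall plan --- linearize $\lfloor n^c\rfloor$ by a Beatty sequence and feed the result into a level-of-distribution statement --- is correct in spirit and is indeed what the paper does. But there is a genuine gap in the step where you claim cancellation ``for \emph{every} individual $(\alpha,\beta)$ up to a power saving''. Neither Theorem~\ref{thm_1} nor its Beatty analogue gives pointwise bounds for a single real slope: a level-of-distribution statement is an \emph{average} over the modulus. The sum $\sum_{d\leq D}\max_a\lvert\cdots\rvert\leq Cx^{1-\eta}$ does not imply $\max_a\lvert\cdots\rvert\leq Cx^{1-\eta}$ for each fixed $d$, and for irrational $\alpha$ a uniform pointwise bound of this type is not available (it would depend on the Diophantine properties of $\alpha$, and fails badly for Liouville-like $\alpha$). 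So the sentence beginning ``Feeding $\alpha\asymp N^{1-c}$\dots'' does not follow from the displayed upgraded Theorem~\ref{thm_1}.

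The paper's route avoids exactly this issue. The Beatty-sequence input is Theorem~\ref{thm_2}, which is an \emph{integral} over $\alpha\in[D,2D]$ with the maximum over $\beta$ inside; the paper stresses that this structure is essential. The reduction from~\cite[Section~4.2]{MS2017} linearizes in the variable $n$ (not $m$), so on a block $n\in[N_0,N_0+L)$ one approximates $n^c$ by $n\alpha+\beta$ with $\alpha=cN_0^{c-1}\asymp N^{c-1}\geq 1$ (your $\alpha\asymp N^{1-c}<1$ comes from linearizing in the inverse variable and does not fit the framework, where $\alpha\geq 1$). As the block index varies, $\alpha$ sweeps monotonically through a real interval of the right size; after a change of variables the sum over blocks becomes (up to admissible errors) precisely an integral in $\alpha$, to which Theorem~\ref{thm_2} applies with $D\asymp N^{c-1}$ and $x\asymp N^c$, i.e.\ $D\asymp x^{(c-1)/c}$ with $(c-1)/c\in(0,1)$ for $1<c<2$. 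In short: you must replace the claimed pointwise bound by the integral bound of Theorem~\ref{thm_2}, and carry out the linearization in $n$ so that the block sum matches that integral.
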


For proving this theorem, we follow the general argument presented in Section~4.2 of~\cite{MS2017}.
This argument uses linear approximation of $\lfloor n^c\rfloor$ by $\lfloor n\alpha+\beta\rfloor$ and thus reduces the problem to Beatty sequences.
Therefore Theorem~\ref{thm_nc} is a corollary of the following Beatty sequence version of a statement on the level of distribution.
\begin{theorem}\label{thm_2}
Let $0<\theta_1\leq\theta_2<1$.
There exist $\eta>0$ and $C$ such that
\[
\int_{D}^{2D}\max_{\substack{y,z\\0\leq y\leq z\\z-y\leq x}}\max_{\beta\geq 0}
\left\lvert  A(y,z;\alpha,\beta) - \frac{z-y}{2\alpha}  \right\rvert
\,\mathrm d\alpha
\leq Cx^{1-\eta}
\]
for all $x$ and $D$ such that $x\geq 1$ and $x^{\theta_1}\leq D\leq x^{\theta_2}$.
\end{theorem}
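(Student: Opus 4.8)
The plan is to reduce the Beatty-sequence sum to an exponential-sum estimate and then to the Gowers-norm bound on the Thue--Morse sequence announced in the introduction. First I would detect the membership condition $m=\lfloor n\alpha+\beta\rfloor$ for some integer $n$ by the standard device: for $\alpha\geq 1$, $m$ lies in the Beatty sequence $\lfloor n\alpha+\beta\rfloor$ if and only if $\{(m+1-\beta)/\alpha\}<1/\alpha$ (up to the usual boundary care), so that the indicator equals $\lfloor (m+1-\beta)/\alpha\rfloor-\lfloor(m-\beta)/\alpha\rfloor$. Writing $\bt(m)=\tO$ as $(1+(-1)^{s(m)})/2$, the quantity $A(y,z;\alpha,\beta)$ becomes a main term $\tfrac{z-y}{2\alpha}+O(1)$ plus the genuinely oscillatory piece $\tfrac12\sum_{y\le m<z}(-1)^{s(m)}\bigl(\lfloor(m+1-\beta)/\alpha\rfloor-\lfloor(m-\beta)/\alpha\rfloor\bigr)$. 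Expanding the fractional-part indicator into a Fourier series (truncated at height $H$, with the Vaaler/Vinogradov smoothing to control the error) turns the inner sum into a bounded number of sums of the shape $\sum_{y\le m<z}(-1)^{s(m)}\e(hm/\alpha)$, with $h$ ranging up to $H$ and weights decaying like $1/\lvert h\rvert$.

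Next I would carry out the integration over $\alpha\in[D,2D]$ first. For fixed $h\ge 1$, bounding $\int_D^{2D}\lvert\sum_{y\le m<z}(-1)^{s(m)}\e(hm/\alpha)\rvert\,d\alpha$ by Cauchy--Schwarz and opening the square introduces a difference variable; the off-diagonal terms carry a factor $\e\bigl(h(m_1-m_2)/\alpha\bigr)$ whose $\alpha$-integral is small unless $m_1-m_2$ is small, by the first-derivative (van der Corput) estimate for exponential integrals. This is the mechanism by which averaging over the Beatty slope $\alpha$ plays the role that averaging over the modulus $d$ plays in Theorem~\ref{thm_1}: it converts a sum over $d\le D$ (or $\alpha\in[D,2D]$) into a saving, provided $D$ is a positive power of $x$, which is exactly the hypothesis $x^{\theta_1}\le D\le x^{\theta_2}$. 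After this step one is left with controlling, for each surviving small shift, a correlation sum of $(-1)^{s(m)}$ with an additive character, uniformly in the character — and here I would truncate $s$ to $s_\rho$ at a scale $\rho\approx\log_2 x$, the error from this truncation being negligible.

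The core of the argument is then to estimate the resulting correlation sums of the truncated Thue--Morse sequence. Applying van der Corput's inequality $m$ times (with $m$ a large fixed integer depending on $\varepsilon$, $\theta_1$, $\theta_2$) removes the additive character — since $\e(h(n+\varepsilon\cdot r)/\alpha)$ telescopes across the $\pm$ structure of the Gowers box — and reduces everything to the uniformity-norm quantity
\[
\sum_{\substack{0\le n<2^\rho\\ 0\le r_1,\dots,r_m<2^\rho}}\ \prod_{\varepsilon\in\{0,1\}^m}(-1)^{s_\rho(n+\varepsilon\cdot r)}
\]
displayed in the introduction, whose power saving $O(2^{(m+1)\rho(1-\delta_m)})$ for some $\delta_m>0$ I would establish following Konieczny~\cite{K2017}, by analyzing the carry propagation in binary addition via a transfer-operator / automaton argument and exploiting that the $2^m$-fold product over the box cannot be constant in $n$ on a positive-density set. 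Unwinding the van der Corput steps then yields $\sum_{y\le m<z}(-1)^{s_\rho(m)}\e(hm/\alpha)\ll x^{1-\eta'}$ for some $\eta'>0$, uniformly in $h$ and $\alpha$; summing the $1/\lvert h\rvert$-weighted contributions over $h\le H$ (costing only a $\log$) and choosing $H$ a small power of $x$ to balance the truncation error finishes the proof, with $\eta>0$ extracted from $\eta'$ and the number of van der Corput iterations.

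The main obstacle I anticipate is the Gowers-norm estimate itself: getting a power saving in the $2^m$-fold correlation of $(-1)^{s_\rho}$ with $m$ arbitrarily large — uniform in $\rho$ — is delicate, because the naive recursion on the binary digits loses the saving as $m$ grows, and one must argue carefully (as in Konieczny's work) that the relevant automaton has no "large invariant component" on which the product degenerates. A secondary technical point is making the Beatty-to-exponential-sum reduction and the $\alpha$-integration uniform in the shift $\beta$ and in the endpoints $y,z$ (in particular handling the contribution of $m$ near the boundary of $[y,z)$ and the $h=0$ term, which must be absorbed into the main term $\tfrac{z-y}{2\alpha}$ rather than the error); these are routine but require care to keep all error terms below $x^{1-\eta}$ after summing the geometric-type series in $D$.
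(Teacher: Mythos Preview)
Your reduction via Fourier expansion of the Beatty indicator is the natural first move, but the subsequent plan has a genuine gap that prevents it from reaching any level beyond $1/2$. After Cauchy--Schwarz in $\alpha$ you write
\[
\int_D^{2D}\Bigl\lvert\sum_{y\le m<z}(-1)^{s(m)}\e(hm/\alpha)\Bigr\rvert\,\mathrm d\alpha
\le D^{1/2}\Bigl(\sum_k C(k)\int_D^{2D}\e(hk/\alpha)\,\mathrm d\alpha\Bigr)^{1/2},
\]
with $C(k)=\sum_m(-1)^{s(m)+s(m+k)}$. The diagonal $k=0$ contributes $C(0)\cdot D\asymp xD$, and no amount of oscillation in $\alpha$ or in Thue--Morse can touch it. Thus $\int_D^{2D}\lvert S\rvert\,\mathrm d\alpha\gg D x^{1/2}=x^{\theta+1/2}$ already from the diagonal, which beats $x^{1-\eta}$ only when $\theta<1/2$. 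Iterating van der Corput on the surviving correlations afterwards does not repair this: the loss is incurred at the Cauchy--Schwarz step, before you ever see a Gowers norm. In short, the $\alpha$-average in your scheme does \emph{not} play the r\^ole that the $d$-sum plays in the arithmetic-progression theorem; it gives no saving on the diagonal.

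The paper's route is structurally different. It keeps the Beatty parametrisation and works with the short sum $\sum_{n<N}(-1)^{s(\lfloor n\alpha+\beta\rfloor)}\e(n\xi)$, $N\asymp x/D$ (this is Proposition~\ref{prp_2}). Van der Corput is applied to the $n$-sum, but with $\alpha$-dependent steps $K_i=K_i(\alpha)$ chosen via Farey/Dirichlet so that $K_i\alpha$ is close to a prescribed power of $2$; each such step strips away a block of $\mu$ low binary digits, and after $m$ iterations only $\rho=\lambda-(m+1)\mu$ digits survive, with $2^\rho\ll N$. At that point the summand no longer depends on $\beta$ (so the inner $\max_\beta$ disappears for free), and the expression collapses to the Gowers norm in Proposition~\ref{prp_gowers}. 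The integral over $\alpha$ is used not to generate oscillation but to average discrepancies of $n\alpha$-sequences and to show (Lemma~\ref{lem_exceptions}) that for most $\alpha$ the Farey numerators $\mathfrak p_i$ have small $2$-adic valuation, so the shift variables $r_i\mathfrak p_i$ fill out full residue systems. This digit-cutting mechanism, specific to the structure $s(\lfloor n\alpha+\beta\rfloor)$, is what lets the argument reach every $\theta<1$; it has no analogue in your Fourier-expansion approach.
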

In order to derive Theorem~\ref{thm_nc} from this result, it is essential that we have the maximum over $\beta$ inside the integral over $\alpha$,
since we need to approximate $\lfloor n^c\rfloor$ by inhomogeneous (shifted) Beatty sequences $\lfloor n\alpha+\beta\rfloor$.

Concerning Theorem~\ref{thm_1}, a version of this result without the maximum over $a$ follows from work of Martin, Mauduit and Rivat, as we show now.
\begin{remark}
Martin, Mauduit and Rivat~\cite[Proposition~3]{MMR2014} proved an estimate of a sum of type II
containing the following special case:
let $a_m$ and $b_n$ be complex numbers satisfying $\lvert a_m\rvert\leq 1$ and $\lvert b_n\rvert\leq 1$.
Assume that $x\geq 2$, $0<\varepsilon\leq 1/2$, $x^\varepsilon\leq M,N\leq x$ and $MN\leq x$. Then
\[S_0=
\sum_{M<m\leq 2M}\sum_{\substack{N<n\leq 2N\\mn\leq x}}
a_mb_n
(-1)^{s(mn)}
\ll x^{1-\eta}
\]
for an absolute implied constant and some $\eta>0$ only depending on $\varepsilon$.
By dyadic decomposition and using the trivial estimate for $n<x^\varepsilon$, we obtain
\[
\sum_{M<m\leq 2M}
\Biggl \lvert
\sum_{\substack{0\leq n\leq 2N\\mn\leq x}}
(-1)^{s(mn)}
\Biggr \rvert
\ll x^{1-\eta}\log N+Mx^\varepsilon
\]
for $M$ and $N$ satisfying the same restrictions, and with an implied constant that may depend on $\varepsilon$.
Let $x$ be given and assume that $x^\varepsilon\leq M\leq x^\theta$ for some $\theta\in(1/2,1)$.
Set $\varepsilon=\frac{1-\theta}2\leq 1/2$ and $N=x/M$.
Then $N\geq x^\varepsilon$ and
the condition $mn\leq x$ implies $n\leq 2N$.
We obtain

\[\sum_{M<m\leq 2M}
\Biggl \lvert
\sum_{\substack{0\leq k\leq x\\k\equiv 0\bmod m}}
(-1)^{s(k)}
\Biggr \rvert
\ll x^{1-\eta}\log x+Mx^{\varepsilon}.
\]

We use dyadic decomposition again (in $m$), moreover Fouvry and Mauduit~\cite{FM1996} in order to handle residue classes having small modulus $m$, that is, $m\leq x^\varepsilon$.
Moreover, we note (as we did in~\cite{MS2017}) that the error term in their
estimate~\cite[(1.6)]{FM1996} is in fact $x^{1-\eta}$ for some $\eta>0$, which follows from their Th\'eor\`eme~2.
We obtain
\begin{equation*}
\sum_{1\leq d\leq D}
\Biggl \lvert
\sum_{\substack{0\leq n\leq x\\n\equiv 0\bmod d}}
  (-1)^{s(n)}
\Biggr \rvert
\leq C x^{1-\eta}
\end{equation*}
for $D=x^\theta$ and some $\eta>0$ and $C$ depending on $\theta$.
This is a weak version of a statement of the type ``the Thue--Morse sequence has level of distribution $1$'', where $\mathcal Q(d)$ has only one element.
(We note that we could also handle the maximum over $y\leq x$, using the factor $\e(\beta mn)$ that appears in~\cite[Proposition~3]{MMR2014}.)
The additional value of our paper lies in the maximum over the residue classes modulo $d$.

\end{remark}

Finally, we note the following open questions concerning Theorems~\ref{thm_1} and~\ref{thm_nc}:
\begin{enumerate}
\item In Theorem~\ref{thm_1}, can we choose $D=x(\log x)^{-B}$ for some $B>0$ (using $x(\log x)^{-A}$ as error term)?
\item Does Theorem~\ref{thm_nc} hold for $\lfloor x^2(\log x)^{-C}\rfloor$ (and similar sequences, possibly with a worse error term)?
\end{enumerate}

\noindent\textbf{Plan of the paper.}
In Section~\ref{sec_aux} we state two results (Propositions~\ref{prp_1} and~\ref{prp_2}) from which Theorems~\ref{thm_1} and~\ref{thm_2} follow, moreover an important Gowers uniformity norm estimate of the Thue--Morse sequence, Proposition~\ref{prp_gowers}. We also give an idea of the proof of Proposition~\ref{prp_1}.
In Section~\ref{sec_lemmas} we state lemmas needed for proving the results from Section~\ref{sec_aux}.
Section~\ref{sec_proofs} is devoted to proving Propositions~\ref{prp_1} and~\ref{prp_2}.
Finally, we prove Proposition~\ref{prp_gowers} and a technical lemma appearing in the proof of Propositions~\ref{prp_1} and~\ref{prp_2}.
\section{Auxiliary results}\label{sec_aux}
As in our earlier paper with M\"ullner (\cite[Section~4.1]{MS2017}, and using Fouvry and Mauduit \cite[Th\'eor\`eme~2]{FM1996} for handling small $d$), it is sufficient to prove the following two results in order to obtain our main theorems.
\begin{proposition}\label{prp_1}
For real numbers $N,D\geq 1$ and $\xi$ set
\begin{equation}\label{eqn_S0_def}
S_0 = S_0(N,D,\xi)
=
\sum_{D\leq d<2D}
\max_{a\geq 0}
\Biggl \lvert \sum_{0\leq n<N}
  \e\left(\frac 12s(nd+a)\right)\e(n\xi)
\Biggr \rvert
. \end{equation}
Let $\rho_2\geq \rho_1>0$.
There exists an $\eta>0$ and a constant $C$ such that
\begin{equation*}
\frac{S_0}{ND} \leq CN^{-\eta}
\end{equation*}
holds for all $\xi\in\mathbb R$ and all real numbers $N,D\geq 1$ satisfying
$N^{\rho_1}\leq D \leq N^{\rho_2}$.
\end{proposition}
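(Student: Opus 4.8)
The plan is to reduce the maximum-over-$a$ sum $S_0$ to the Gowers-type expression mentioned in the introduction (Proposition~\ref{prp_gowers}), by iterated use of van der Corput's inequality, and then to invoke that estimate as a black box. First I would dispose of easy ranges: if $D$ is very large compared with $N$ (say $D \geq N^{\rho_2}$ with $\rho_2$ large), or $N$ is bounded, the bound is trivial, so we may assume $N,D\geq N_0$ for a suitable $N_0=N_0(\rho_1,\rho_2)$ and work with a free small parameter $\rho$ (the ``digit cutoff'') to be chosen as a small multiple of $\log_2 N$. The key structural observation, exactly as in~\cite{MS2017}, is that $s(nd+a)$ depends on the low-order $\approx \log_2(Nd)$ binary digits only up to a controlled main term, so after splitting off the top digits we replace $s(nd+a)$ by $s_{\mu,\lambda}(nd+a)$ for suitable levels $\mu<\lambda$, paying an error that is negligible on the scale $ND$. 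The phase $\e(n\xi)$ is linear and survives the van der Corput iteration (the $m$-fold difference of a linear function is constant), so it contributes nothing essential — this is why $\xi$ is arbitrary in the statement.

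The heart of the argument is then the following: after fixing the worst $a=a(d)$ for each $d$ and removing the absolute value by inserting a unimodular weight $c_d$ (so that $S_0 = \sum_{D\le d<2D} c_d \sum_n \e(\tfrac12 s_{\mu,\lambda}(nd+a_d))\e(n\xi)$), apply the Cauchy--Schwarz inequality in $d$ to pass to $\lvert S_0\rvert^2 \ll D \sum_d \lvert \sum_n \cdots\rvert^2$, and then apply van der Corput's inequality $m$ times in the variable $n$. Each application introduces a new shift variable $r_i$ and replaces the summand by a product of two copies with arguments differing by $r_i d$; after $m$ steps we obtain a $2^m$-fold product $\prod_{\varepsilon\in\{0,1\}^m}\e(\tfrac12 s_{\mu,\lambda}((n+\varepsilon\cdot r)d + a_d))$, with the linear phases having cancelled. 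The crucial point is a change of variables: because $\gcd$ issues are controlled and $d$ ranges over a dyadic block, the congruence structure lets us replace $nd+a_d$, $rd$ by new free variables modulo an appropriate power of $2$, at the cost of the divisor-type weight $\nu_2(\cdot)$ and a factor accounting for how many $(n,d)$ map to a given residue; here the truncated sum-of-digits function $s_\rho$ with $\rho = \lambda-\mu$ appears naturally, and one arrives at a main term of the shape
\[
\sum_{\substack{0\le n<2^\rho\\ 0\le r_1,\dots,r_m<2^\rho}}\ \prod_{\varepsilon\in\{0,1\}^m}(-1)^{s_\rho(n+\varepsilon\cdot r)},
\]
up to a bounded number of logarithmic factors and the $\nu_2$-weights, which are handled by standard summation by parts over the $2$-adic valuations.

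By Proposition~\ref{prp_gowers} (the Gowers uniformity norm estimate of Konieczny type) this $2^m$-fold sum is $O\bigl(2^{(m+1)\rho}\, 2^{-c\rho}\bigr)$ for some $c=c(m)>0$, i.e.\ it beats the trivial bound $2^{(m+1)\rho}$ by a power saving. Unwinding the chain of inequalities — dividing out the $D$ from Cauchy--Schwarz, the $2^{j\rho}$ losses from the $m$ van der Corput steps, and taking a $2^m$-th root — yields $S_0/(ND) \ll (\log N)^{O(1)} 2^{-c'\rho}$ with $c'=c'(m)>0$, and choosing $\rho = \kappa \log_2 N$ with $\kappa$ a sufficiently small constant (small enough that all the earlier truncation errors, which were bounded by things like $N D\, 2^{-(\lambda - \log_2(ND))}$ or $N D\, \mu\, 2^{-\mu}$, remain under control) converts this to the claimed $CN^{-\eta}$. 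The number $m$ of van der Corput iterations is a fixed parameter, chosen large enough that the power saving in Proposition~\ref{prp_gowers} survives the $2^m$-th root with room to spare; crucially $m$ does \emph{not} depend on $\rho_1,\rho_2$, only the final $\eta$ and $C$ do.

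The main obstacle, and where the real work lies, is the change-of-variables step that turns the $d$-sum with the congruence $nd + a_d \pmod{2^\lambda}$ into the clean Gowers sum over free variables mod $2^\rho$: one must track carefully how the dyadic constraint $D\le d<2D$ interacts with the arithmetic-progression structure, control the multiplicity with which pairs $(n,d)$ hit a given residue class (this is where the hypothesis $N^{\rho_1}\le D\le N^{\rho_2}$ enters, ensuring both $N$ and $D$ are genuine powers of $N$ so that neither the inner sum nor the divisor sum degenerates), and absorb the resulting $\nu_2$-weighted divisor sums without losing more than logarithmic factors. A secondary technical nuisance is that the optimal $a$ depends on $d$, so one cannot simply interchange and must keep the unimodular weights $c_d$ throughout; fortunately these are harmless under Cauchy--Schwarz and van der Corput. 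Everything else — the van der Corput iteration itself, the cancellation of the linear phase, the final optimization of $\rho$ — is routine once this combinatorial reduction is in place.
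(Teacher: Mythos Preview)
There is a genuine gap, and it sits exactly where you flag ``the main obstacle'': the change of variables you describe cannot be carried out. After $m$ plain van der Corput steps the arguments are $(n+\varepsilon\cdot r)d+a_d$, which you read through $s_{\mu,\lambda}$, i.e.\ through a function of period $2^{\lambda-\mu}$. But $2^\lambda$ must be taken of size $\gg ND\cdot R_0$ (otherwise the carry propagation step loses everything), while $n$ ranges only over $[0,N)$; so the $n$-sum is far too short to cover a full period, and there is no bijection---weighted by $\nu_2$ or otherwise---onto free variables modulo $2^\rho$. Cutting the \emph{top} digits (passing from $s$ to $s_\lambda$) is indeed done by one van der Corput step plus carry propagation, but cutting the \emph{bottom} digits (passing to $s_{\mu,\lambda}$ or to $s_\rho$) is the whole difficulty, and nothing in your outline accomplishes it: the $2^m$-fold product does not cancel low digits, because the shifts $r_i d$ are not multiples of any prescribed power of $2$.

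The missing idea, which is the main new input of the paper over~\cite{MS2017}, is to use the \emph{generalized} van der Corput inequality (Lemma~\ref{lem_vdc}) with multipliers $K_1,\ldots,K_m$ depending on $d$: each $K_i$ is chosen by Dirichlet/Farey approximation so that $K_i d/2^{i\mu}$ is within $2^{-\sigma}$ of a multiple of $2^\mu$. This forces the shift $r_iK_id$ to be (essentially) a multiple of $2^{(i+1)\mu}$, so one may discard a block of $\mu$ low digits at the $i$-th iteration; after $m$ iterations only $\rho=\lambda-(m+1)\mu$ digits remain, with $2^\rho\ll N$, and \emph{now} the $n$-sum covers a full period (via a discrepancy estimate averaged over $d$), while the residual integer shifts $\mathfrak p_i$ are handled by the divisibility Lemma~\ref{lem_exceptions}, again averaged over $d$. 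This also shows your claim that $m$ is independent of $\rho_1,\rho_2$ is wrong: one must remove roughly $\log_2(ND)$ digits in blocks of size $\mu\lesssim\log_2 N$ (since $K_i\le N$ is required for van der Corput to be nontrivial), so $m$ must grow with $\rho_2$; the paper takes $m=3j-1$ where $N^{j-1}\le 2^\nu<N^j$.
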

\begin{proposition}\label{prp_2}
For real numbers $D,N\geq 1$ and $\xi$ set
\begin{equation}\label{eqn_S0_def_continuous}
S_0 = S_0(N,D,\xi)
=
\int_{D}^{2D}
\max_{\beta\geq 0}
\Biggl \lvert
    \sum_{0\leq n<N}\e\left(\frac 12s\bigl(\lfloor n\alpha+\beta\rfloor\bigr)\right)\e(n\xi)
\Biggr \rvert
\,\mathrm d \alpha
.
\end{equation}
Let $\rho_2\geq \rho_1>0$.
There exist $\eta>0$ and a constant $C$ such that
\begin{equation*}
\frac{S_0}{ND}
\leq
CN^{-\eta}
\end{equation*}
holds for all real numbers $D,N\geq 1$ satisfying
$N^{\rho_1}\leq D \leq N^{\rho_2}$ and for all $\xi\in\mathbb R$.
\end{proposition}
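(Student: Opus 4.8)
The plan is to reduce Proposition~\ref{prp_2} to Proposition~\ref{prp_1} by discretizing the parameter $\alpha$, at the cost of controlling how $\lfloor n\alpha+\beta\rfloor$ changes under small perturbations of $\alpha$. First I would fix a scale: partition the interval $[D,2D)$ into roughly $D/\delta$ subintervals of length $\delta$, where $\delta = D/Q$ for a parameter $Q$ to be optimized (a power of $N$). On a subinterval $[\alpha_0,\alpha_0+\delta)$, for $n<N$ we have $n\alpha - n\alpha_0 \in [0,nN\delta/D) \subseteq [0,N^2\delta/D)$, so $\lfloor n\alpha+\beta\rfloor$ and $\lfloor n\alpha_0+\beta'\rfloor$ differ only when $\{n\alpha_0+\beta'\}$ lies in a short window, for a suitably adjusted shift $\beta'$. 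This means the inner sum $\sum_{n<N}\e(\tfrac12 s(\lfloor n\alpha+\beta\rfloor))\e(n\xi)$ can be compared to a sum of the shape appearing in $S_0(N,D,\xi)$ of Proposition~\ref{prp_1} — an arithmetic-progression sum $nd+a$ — after rounding $\alpha_0$ to a nearby rational $p/q$ with $q\asymp Q$ (a Dirichlet/continued-fraction approximation step), replacing $\lfloor n p/q + \beta'\rfloor$ by the arithmetic progression $n p + a$ read modulo $q$ in blocks.

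The key steps, in order, are: (i) a \emph{Lipschitz-in-$\alpha$ lemma} bounding, uniformly in $\beta$, the number of $n<N$ for which $\lfloor n\alpha+\beta\rfloor \neq \lfloor n\alpha_0+\beta\rfloor$ by $O(1 + N^2|\alpha-\alpha_0|/D)$, plus the observation that on such exceptional $n$ the summand changes by $O(1)$; (ii) \emph{rational approximation}: choose $p/q$ with $|\alpha_0 - p/q| \le 1/(qQ)$ and $q\le Q$, so that on $n<N$ the Beatty sequence $\lfloor n\alpha_0+\beta\rfloor$ agrees with the periodic pattern $\lfloor np/q+\beta\rfloor$ up to $O(N^2/(qQ))$ exceptions; (iii) \emph{reorganizing into arithmetic progressions}: for fixed $q$, $\lfloor np/q+\beta\rfloor$ runs through $\mathbb Z$ hitting residue classes mod $q$ in a controlled way, so $\sum_{n<N}\e(\tfrac12 s(\lfloor np/q+\beta\rfloor))\e(n\xi)$ splits into $O(q)$ sums of the form $\sum \e(\tfrac12 s(md+a))\e(m\xi')$ over progressions, each bounded by applying Proposition~\ref{prp_1} with $D$ replaced by $q$ (after checking $q$ lies in an admissible range $N^{\rho_1'}\le q\le N^{\rho_2'}$; the small-$q$ range is handled by Theorem~\ref{thm_FM1996} / the remark, exactly as in~\cite{MS2017}); (iv) \emph{integrating and optimizing}: sum the per-subinterval bound over the $D/\delta$ subintervals, multiply by $\delta$, and choose $Q$ (hence $\delta$) to balance the main saving $N^{-\eta}$ from Proposition~\ref{prp_1} against the error terms $N^2\delta/D$ and $N^2/Q$, yielding an overall bound $ND\cdot N^{-\eta'}$ for some smaller $\eta'>0$.

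The main obstacle I expect is step (iii): passing cleanly from the \emph{Beatty} sum $\sum_{n<N}\e(\tfrac12 s(\lfloor n\alpha+\beta\rfloor))\e(n\xi)$ to genuine \emph{arithmetic-progression} sums $\sum_m \e(\tfrac12 s(md+a))\e(m\xi')$ without losing the uniformity in $\beta$ and without the number of pieces blowing up. The delicate point is that the max over $\beta$ sits \emph{inside} the integral over $\alpha$, so the approximation $\alpha\rightsquigarrow p/q$ must be done in a way that is uniform over all shifts simultaneously; one has to be careful that the window $[0, N^2\delta/D)$ controlling the exceptional $n$ does not depend on $\beta$, which it does not, but organizing the resulting progressions and tracking the factor $\e(n\xi)$ (which becomes $\e(m\xi')$ with $\xi'$ depending on $p,q$ but not $\beta$) through the block decomposition requires care. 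A secondary technical nuisance is ensuring the auxiliary modulus $q$ can always be taken in a range where Proposition~\ref{prp_1} applies — this forces a two-sided constraint on $Q$ relative to $N$ and $D$, which is why the hypothesis $N^{\rho_1}\le D\le N^{\rho_2}$ is exactly what is needed to make the optimization in step (iv) go through.
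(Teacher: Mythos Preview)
Your plan is genuinely different from what the paper does. The paper does \emph{not} reduce Proposition~\ref{prp_2} to Proposition~\ref{prp_1}; it proves both in parallel by running the identical argument (Cauchy--Schwarz, iterated van der Corput with Farey-chosen shifts $K_i$, then the Gowers norm estimate of Proposition~\ref{prp_gowers}) with the Lebesgue measure in place of the counting measure on $\alpha$. The only places where the two cases diverge are the mean discrepancy estimate (Lemma~\ref{lem_mean_discrepancy}) and the technical Lemma~\ref{lem_exceptions}, each of which has a discrete and a continuous version. Crucially, the paper never converts the Beatty sum into arithmetic-progression sums at all; the integral over $\alpha$ is carried through to the very end and is used to average out the auxiliary integers $\mathfrak p_i$.

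Your step~(iii) has a real gap. When you write $n=mq+r$ for $\alpha\approx p/q$, the value $\lfloor np/q+\beta\rfloor$ equals $mp+a_r$, so the common difference of the arithmetic progression you obtain is $p\asymp Dq$, not $q$. Consequently ``applying Proposition~\ref{prp_1} with $D$ replaced by $q$'' is not what you want: the correct invocation has $N'=N/q$, $D'=Dq$, $\xi'=q\xi$. More seriously, Proposition~\ref{prp_1} bounds the \emph{sum over all moduli} $d\in[D',2D')$, not a single progression; so you cannot bound ``each'' of the $O(q)$ pieces separately by it. To make this work you must organise the integral over $\alpha$ as a Farey dissection at level $Q$, so that for each fixed $q\leq Q$ the sum over numerators $p\in[Dq,2Dq)$ becomes exactly the $d$-sum in $S_0(N/q,Dq,q\xi)$. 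Your equal-length subinterval discretization in step~(i) does not produce this structure, and your step~(iv) cannot be carried out as stated because the saving from Proposition~\ref{prp_1} is only available after summing over $p$, not termwise. (The Lipschitz bound in~(i) also has a spurious $1/D$: the number of $n<N$ with $\lfloor n\alpha+\beta\rfloor\neq\lfloor n\alpha_0+\beta\rfloor$ is governed by $N\lvert\alpha-\alpha_0\rvert$, not $N\lvert\alpha-\alpha_0\rvert/D$.) With these corrections the reduction route can be made to work, but it is more lossy and more intricate than the paper's direct parallel proof.
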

In the proof of these results, we will use the following essential \emph{Gowers uniformity norm} estimate of the Thue--Morse sequence (see Konieczny~\cite{K2017}).

\begin{proposition}\label{prp_gowers}
Let $m\geq 2$ be an integer.
There exists some $\eta>0$ and some $C$ such that
\[
\frac 1{2^{(m+1)\rho}}
\sum_{\substack{0\leq n<2^\rho\\0\leq r_1,\ldots,r_m< 2^\rho}}
\e\left(\frac 12\sum_{\varepsilon\in\{0,1\}^m}s_\rho(n+\varepsilon\cdot r)\right)
\leq C 2^{-\rho\eta}
\]
for all $\rho\geq 0$, where $\varepsilon\cdot r=\sum_{1\leq i\leq m}\varepsilon_ir_i$.

\end{proposition}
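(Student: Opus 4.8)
The plan is to proceed by induction on $\rho$, tracking how the digit structure of $s_\rho$ behaves under the substitution $n\mapsto 2n+\delta$, exactly in the spirit of Konieczny's argument~\cite{K2017}. First I would split the sum over $n$ and over each $r_i$ according to the lowest binary digit: write $n = 2n' + \delta_0$ and $r_i = 2r_i' + \delta_i$ with $\delta_0,\delta_1,\ldots,\delta_m\in\{0,1\}$. For a fixed choice of $\varepsilon\in\{0,1\}^m$, the quantity $n+\varepsilon\cdot r$ has lowest digit $\delta_0 + \varepsilon\cdot\delta \bmod 2$ and, crucially,
\[
s_\rho(n+\varepsilon\cdot r) = s_{\rho-1}\!\left(n' + \varepsilon\cdot r' + \left\lfloor \tfrac{\delta_0 + \varepsilon\cdot\delta}{2}\right\rfloor\right) + \bigl((\delta_0 + \varepsilon\cdot\delta)\bmod 2\bigr),
\]
i.e.\ the bottom digit contributes an explicit $\pm1$ phase and the carry $\lfloor(\delta_0+\varepsilon\cdot\delta)/2\rfloor\in\{0,\ldots,\lfloor m/2\rfloor\}$ gets pushed into the argument of $s_{\rho-1}$. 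Summing $\e(\tfrac12\sum_\varepsilon s_\rho(\cdots))$ over the $2^{m+1}$ choices of $(\delta_0,\delta)$ then produces a bounded linear combination (with coefficients of modulus $1$, coming from the bottom-digit phases $\e(\tfrac12\sum_\varepsilon (\delta_0+\varepsilon\cdot\delta))$) of shifted Gowers-type sums at level $\rho-1$, where the shifts in the $m$ "directions" are now integers in a bounded range rather than $0$.

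The second step is therefore to set up a slightly more general quantity than the one in the statement — a Gowers sum at level $\rho$ with the $2^m$ points $n+\varepsilon\cdot r$ replaced by $n + \varepsilon\cdot r + c_\varepsilon$ for a fixed tuple of bounded integer shifts $(c_\varepsilon)_{\varepsilon\in\{0,1\}^m}$ — and to prove a decay bound for all such sums simultaneously by induction, the bound being uniform over shifts lying in a fixed bounded box. The recursion above expresses each such generalized sum (normalized by $2^{(m+1)\rho}$) as an average over $(\delta_0,\delta)$ of generalized sums one level down, with new shift tuples still in a bounded box (the box grows by at most $\lfloor m/2\rfloor$ per step, so one must either allow the box to grow slowly or, better, observe that after renormalizing the directions $r_i'$ one can absorb most of the growth and keep the box bounded — this bookkeeping is routine but needs care). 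Iterating gives a bound of the shape (constant)$\,\cdot\,\gamma^{\rho}$ provided that at \emph{some} bounded level $\rho_0$ — depending only on $m$ — the generalized sums, averaged over the box of admissible shifts, already have absolute value strictly less than the trivial bound $1$.

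The heart of the matter, and the step I expect to be the main obstacle, is establishing that strict contraction at a bounded scale: one must show that for the bounded box of shift tuples $(c_\varepsilon)$ relevant to the recursion, the average over one digit-splitting step of the corresponding level-$(\rho-1)$ generalized sums is bounded by some $\gamma<1$. Equivalently, one needs a non-trivial cancellation estimate for a \emph{single} step that does not merely reproduce the trivial bound. This is where the specific arithmetic of the Thue--Morse phase $\e(\tfrac12 s(\cdot)) = (-1)^{s(\cdot)}$ enters: the sum over $(\delta_0,\delta)\in\{0,1\}^{m+1}$ of the bottom-digit characters $(-1)^{\delta_0+\varepsilon\cdot\delta}$ must be analyzed — for $m\ge 2$ this character sum, viewed correctly as a sum over the hypercube weighted by the level-$(\rho-1)$ data, cannot be constantly of maximal modulus, because the $2^m$ linear forms $\varepsilon\cdot\delta$ span enough of $\mathbb{F}_2^{m}$ to force genuine interference. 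Making this precise — identifying the right finite "base case" computation and checking $\gamma<1$ there, possibly after grouping the $\varepsilon$'s into cosets and using a quadratic-Gauss-sum / self-similarity identity for the Thue--Morse correlation as in~\cite{K2017} — is the technical core; once it is in hand, the induction closes and yields $C\,2^{-\rho\eta}$ with $\eta = -\log_2\gamma/\rho_0 > 0$ and $C$ absorbing the finitely many small-$\rho$ cases. I would also remark that the case $m=1$ is excluded precisely because there the analogous one-step character sum degenerates, consistent with the hypothesis $m\ge 2$.
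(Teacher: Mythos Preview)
Your overall framework coincides with the paper's: generalize to averages $A_\rho(\mathbf a)$ carrying a shift tuple $\mathbf a=(\mathbf a_\varepsilon)_{\varepsilon\in\{0,1\}^m}$, derive the digit-splitting recursion, note that the reachable shift tuples form a finite set (your concern about the box growing is unnecessary: $\max_\varepsilon\lvert\delta(\mathbf a,e)_\varepsilon\rvert\le\tfrac12\bigl(\max_\varepsilon\lvert\mathbf a_\varepsilon\rvert+m+1\bigr)$, so the box stabilizes at radius $m+1$), and then establish contraction.

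The gap is in the contraction step. You locate the cancellation in the single-step bottom-digit phases, but the phase that actually enters the recursion is $(-1)^{\sum_\varepsilon(\delta_0+\varepsilon\cdot\delta)}$, and $\sum_{\varepsilon\in\{0,1\}^m}(\delta_0+\varepsilon\cdot\delta)=2^m\delta_0+2^{m-1}(\delta_1+\cdots+\delta_m)$ is even for every choice of $(\delta_0,\delta)$ once $m\ge2$. Hence all $2^{m+1}$ terms in one step carry the \emph{same} sign $(-1)^{\lvert\mathbf a\rvert}$, and the one-step transition always satisfies $\sum_{\mathbf b}\lvert w(\mathbf a,\mathbf b)\rvert=1$: there is no single-step contraction, for any $\mathbf a$. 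The paper (following Konieczny) instead treats the recursion as a finite weighted directed graph on the reachable shift tuples, shows it is strongly connected, and exhibits an explicit closed walk $\mathbf 0=\mathbf a^{(0)}\to\cdots\to\mathbf a^{(m+1)}=\mathbf 0$ of length $m+1$ whose product of edge signs $\prod_{j=0}^{m}(-1)^{\lvert\mathbf a^{(j)}\rvert}$ equals $-1$; indeed $\lvert\mathbf a^{(j)}\rvert=2^{m-j}$ is even for $1\le j<m$ and odd only for $j=m$. Together with the trivial positive loop at $\mathbf 0$ this forces $\sum_{\mathbf b}\lvert w_k(\mathbf 0,\mathbf b)\rvert<1$ for some finite $k$, and strong connectedness then propagates the strict inequality to every vertex. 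The contraction is therefore a genuinely multi-step phenomenon driven by the parity of $\lvert\mathbf a\rvert$ along paths through the shift-tuple graph, not by interference of low-digit characters within a single step.
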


We wish to give a rough idea of the proof of Proposition~\ref{prp_1}
(Proposition~\ref{prp_2} being proved essentially in the same way.)

\noindent\textbf{Idea of the proof of Proposition~\ref{prp_1}.}
The key idea is to reduce the number of digits that have to be taken into account, and thus to replace the sum-of-digits function $s$ by its truncated version $s_\rho$.
Here $2^\rho$ will be significantly smaller than $N$, so that (we simplify things a bit to convey the idea) we may replace the sum over $s(nd+a)$ by a full sum over the periodic function $s_\rho(n)$.
This reducing of the digits  is achieved by a refinement of the method used by M\"ullner and the author~\cite{MS2017}, which in turn builds on the ideas from the papers~\cite{MR2009,MR2010} by Mauduit and Rivat.

First, we apply van der Corput's inequality and use a ``carry propagation lemma'' in order to replace $s$ by $s_\lambda$.
In general, $2^\lambda$ will be much larger than $N$, so that we have to reduce $\lambda$ further.
The next step is to apply the generalized van der Corput inequality repeatedly. With each application, we remove $\mu$ many digits.
This is achieved by appealing to the Dirichlet approximation theorem, by which we can find a multiple of $\alpha=d/2^{j\mu}$ that is close to a multiple of $2^\mu$. This property can be used to discard the $\mu$ lowest digits.

By this repeated application the estimate is reduced to an estimate of a so-called Gowers uniformity norm of the Thue--Morse sequence; a related estimate was recently given by Konieczny~\cite{K2017}.
\section{Lemmas}\label{sec_lemmas}
We have the following series of lemmas that can also be found in our earlier paper with M\"ullner~\cite{MS2017}.

The first lemma can be proved by elementary considerations.
\begin{lemma}\label{lem_fractional_part_facts}
Let $a,b\in\mathbb R$ and $n\in\mathbb N$.
\begin{align}
&\text{If $\lVert a\rVert<\varepsilon$ and $\lVert b\rVert\geq \varepsilon$, then
$\lfloor a+b\rfloor=\langle a\rangle+\lfloor b\rfloor$.}\label{item_fpf_1}\\
&\lVert na\rVert \leq n\lVert a\rVert\label{item_fpf_2}.\\
&\text{If $\lVert a\rVert<\varepsilon$ and $2n\varepsilon<1$, then
$\langle na\rangle=n\langle a\rangle$.} \label{item_fpf_3}
\end{align}
\end{lemma}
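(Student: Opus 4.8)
The plan is to prove all three assertions directly, by unwinding the definitions of $\lfloor\cdot\rfloor$ and $\langle\cdot\rangle$ and using the decomposition of a real number into its nearest integer plus a remainder of absolute value $\lVert\cdot\rVert$.

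For~\eqref{item_fpf_1} I would write $a=\langle a\rangle+\delta$ with $\lvert\delta\rvert=\lVert a\rVert<\varepsilon$, and $b=\lfloor b\rfloor+\{b\}$. Since $\langle a\rangle$ and $\lfloor b\rfloor$ are integers, $\lfloor a+b\rfloor=\langle a\rangle+\lfloor b\rfloor+\lfloor\delta+\{b\}\rfloor$, so it suffices to show $0\le\delta+\{b\}<1$, i.e.\ that $\lfloor\delta+\{b\}\rfloor=0$. The hypothesis $\lVert b\rVert\ge\varepsilon$ says exactly $\varepsilon\le\{b\}\le 1-\varepsilon$, whence $\delta+\{b\}>\{b\}-\varepsilon\ge 0$ and $\delta+\{b\}<\{b\}+\varepsilon\le 1$, and the claim follows. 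For~\eqref{item_fpf_2} I would take $k\in\mathbb Z$ to be a nearest integer to $a$, so that $\lvert a-k\rvert=\lVert a\rVert$; then $nk\in\mathbb Z$ gives $\lVert na\rVert\le\lvert na-nk\rvert=n\lvert a-k\rvert=n\lVert a\rVert$.

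For~\eqref{item_fpf_3} I would again write $a=\langle a\rangle+\delta$ with $\lvert\delta\rvert=\lVert a\rVert<\varepsilon$, so that $na=n\langle a\rangle+n\delta$ with $\lvert n\delta\rvert<n\varepsilon<1/2$ by the hypothesis $2n\varepsilon<1$. Using $\langle x\rangle=\lfloor x+1/2\rfloor$ and $n\langle a\rangle\in\mathbb Z$ one gets $\langle na\rangle=n\langle a\rangle+\lfloor n\delta+1/2\rfloor$; since $n\delta+1/2\in(1/2-n\varepsilon,\;1/2+n\varepsilon)\subseteq(0,1)$ the floor vanishes, giving $\langle na\rangle=n\langle a\rangle$.

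The only point requiring care — and the reason the hypotheses are phrased as they are — is the boundary analysis: in~\eqref{item_fpf_1} one must ensure the perturbation $\delta$ does not move $\{b\}$ across an integer, which is exactly what $\lVert b\rVert\ge\varepsilon$ guarantees by keeping $\{b\}$ bounded away from both $0$ and $1$, and in~\eqref{item_fpf_3} the strict inequality $2n\varepsilon<1$ is what keeps $n\delta+1/2$ strictly inside $(0,1)$. No deeper input is needed; the whole lemma is an exercise in tracking these inequalities.
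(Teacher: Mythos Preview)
Your argument is correct and complete; each of the three parts is handled by exactly the kind of direct unwinding of the definitions of $\lfloor\cdot\rfloor$, $\{\cdot\}$, $\lVert\cdot\rVert$, and $\langle\cdot\rangle$ that the paper has in mind when it says the lemma ``can be proved by elementary considerations'' without giving further details. Your boundary analysis in~\eqref{item_fpf_1} and~\eqref{item_fpf_3} is precisely the point, and you have it right (note that in~\eqref{item_fpf_3} the hypothesis $2n\varepsilon<1$ forces $\varepsilon<1/2$ when $n\ge 1$, so the decomposition $a=\langle a\rangle+\delta$ with $\lvert\delta\rvert=\lVert a\rVert$ is legitimate, and the case $n=0$ is trivial).
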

As an essential tool, we will use repeatedly the following generalized van der Corput inequality~\cite[Lemme 17]{MR2009}.
\begin{lemma}\label{lem_vdc}
Let $I$ be a finite interval containing $N$ integers and
let $z_n$ be a complex number for $n\in I$.
For all integers $K\geq 1$ and $R\geq 1$ we have
\begin{equation}\label{eqn_vdc}
  \Biggl \lvert\sum_{n\in I}z_n\Biggr \rvert^2
\leq
  \frac{N+K(R-1)}R
  \sum_{0\leq\lvert r\rvert<R}\left(1-\frac {\lvert r\rvert}R\right)
  \sum_{\substack{n\in I\\n+Kr\in I}}z_{n+Kr}\overline{z_n}
.
\end{equation}
\end{lemma}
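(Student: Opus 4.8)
The plan is to run the classical Weyl/van der Corput differencing argument, paying attention to the support of the extended sequence. First I would extend $(z_n)$ to all of $\mathbb Z$ by setting $z_n=0$ for $n\notin I$, and introduce the differenced sums $T_n\coloneqq\sum_{0\le k<R}z_{n+Kk}$. Summing over all $n\in\mathbb Z$ and interchanging the order of summation gives
\[
\sum_{n\in\mathbb Z}T_n=\sum_{0\le k<R}\sum_{n\in\mathbb Z}z_{n+Kk}=R\sum_{n\in I}z_n ,
\]
so, writing $S\coloneqq\sum_{n\in I}z_n$, we have $RS=\sum_{n\in\mathbb Z}T_n$.

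Next I would locate the support of $(T_n)$: if $T_n\neq 0$ then $n+Kk\in I$ for some $0\le k<R$, i.e.\ $n$ lies in $\bigcup_{0\le k<R}(I-Kk)$, a set contained in an interval carrying at most $N+K(R-1)$ integers; call this finite set $J$. Applying the Cauchy--Schwarz inequality to $RS=\sum_{n\in J}T_n$ and then extending the resulting sum back over all of $\mathbb Z$ (legitimate since $T_n$ vanishes off $J$) yields
\[
R^2\lvert S\rvert^2\le\lvert J\rvert\sum_{n\in J}\lvert T_n\rvert^2\le\bigl(N+K(R-1)\bigr)\sum_{n\in\mathbb Z}\lvert T_n\rvert^2 .
\]

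Finally I would expand $\sum_{n\in\mathbb Z}\lvert T_n\rvert^2=\sum_{n\in\mathbb Z}\sum_{0\le k_1,k_2<R}z_{n+Kk_1}\overline{z_{n+Kk_2}}$ and reorganise it by the substitution $m=n+Kk_2$, $r=k_1-k_2$: for each $r$ with $\lvert r\rvert<R$ there are exactly $R-\lvert r\rvert$ pairs $(k_1,k_2)\in\{0,\dots,R-1\}^2$ with difference $r$, and the vanishing of $z$ outside $I$ restricts the $m$-summation to $m\in I$ with $m+Kr\in I$. This gives
\[
\sum_{n\in\mathbb Z}\lvert T_n\rvert^2=R\sum_{0\le\lvert r\rvert<R}\Bigl(1-\tfrac{\lvert r\rvert}{R}\Bigr)\sum_{\substack{m\in I\\m+Kr\in I}}z_{m+Kr}\overline{z_m} ,
\]
and substituting this into the previous display and dividing by $R^2$ produces exactly \eqref{eqn_vdc}.

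I do not expect a genuine obstacle here: the argument is entirely elementary, using only Weyl differencing, a single application of Cauchy--Schwarz, and a change of summation index. The only points needing a little care are the bound $\lvert J\rvert\le N+K(R-1)$ on the support of $(T_n)$ and the combinatorial count of pairs $(k_1,k_2)$ with a prescribed difference; both are routine.
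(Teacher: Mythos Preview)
Your argument is correct and is exactly the standard Weyl--van der Corput differencing proof of this inequality. The paper does not give its own proof at all; it simply quotes the lemma from Mauduit--Rivat~\cite[Lemme~17]{MR2009}, so there is nothing further to compare.
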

Assume that $\alpha$ is a real number and $N$ is a nonnegative integer.
We define the \emph{discrepancy} of the sequence $n\alpha$ modulo $1$:
\[
D_N(\alpha)=\sup_{\substack{0\leq x\leq 1\\y\in\mathbb R}}  
\left\lvert  \frac 1N\sum_{n<N}c_{[0,x)+y+\mathbb Z}(n\alpha)-x  \right\rvert. 
\]
Applying this definition, using $x=1/(KT)$ and $\alpha/K$ instead of $\alpha$, we obtain the following lemma.
\begin{lemma}\label{lem_f_discrepancy}
Let $J$ be an interval in $\mathbb R$ containing $N$ integers
and let $\alpha$ and $\beta$ be real numbers.
Assume that $t,T,k$ and $K$ are integers such that $0\leq t<T$ and $0\leq k<K$.
Then
\[
\left\lvert\bigl\{
    n\in J: \frac tT\leq \{n\alpha+\beta\}<\frac{t+1}T,
    \lfloor n\alpha+\beta\rfloor\equiv k\bmod K
\bigr\}\right\rvert
=
\frac N{KT} + O\left(ND_N\left(\frac \alpha K\right)\right)
\]
with an absolute implied constant.
\end{lemma}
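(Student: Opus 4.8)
The plan is to derive Lemma~\ref{lem_f_discrepancy} directly from the definition of the discrepancy $D_N(\alpha)$ by a change of variables, exactly as the sentence preceding the statement suggests. The condition ``$t/T \le \{n\alpha+\beta\} < (t+1)/T$ and $\lfloor n\alpha+\beta\rfloor \equiv k \bmod K$'' is a condition on $n\alpha+\beta$ modulo $KT$: writing $m = n\alpha+\beta$, the requirement is that $\lfloor m \rfloor \in k + K\mathbb{Z}$ and the fractional part of $m$ lies in the subinterval of length $1/T$ starting at $t/T$. Together these say that $m$, reduced modulo $K$, lies in the interval $[\,k + t/T,\; k + (t+1)/T\,)$; equivalently, $m/K$ reduced modulo $1$ lies in the interval $I$ of length $1/(KT)$ whose left endpoint is $(k + t/T)/K$. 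So the first step is to record this equivalence cleanly:
\[
    \frac tT\le \{n\alpha+\beta\}<\frac{t+1}T \text{ and } \lfloor n\alpha+\beta\rfloor\equiv k\bmod K
    \iff
    \Bigl\{\frac{n\alpha+\beta}{K}\Bigr\}\in I,
\]
where $I = \bigl[\,(k/K) + t/(KT),\ (k/K) + (t+1)/(KT)\,\bigr) \pmod 1$, an interval of length $x := 1/(KT)$.

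Second, I would rewrite the counting function as a sum of the indicator $c_{I+\mathbb Z}$ over $n\in J$. Since $(n\alpha+\beta)/K = n(\alpha/K) + \beta/K$, and $I$ is an interval of length $x$ translated by the real number $y := \beta/K$ minus the left endpoint offset (or, more cleanly, we can absorb everything into the ``$+y$'' in the definition of $D_N$, since $D_N(\alpha/K)$ takes a supremum over all starting points $y\in\mathbb R$ and all lengths $x\in[0,1]$). Concretely,
\[
    \Bigl\lvert\Bigl\{ n\in J : \Bigl\{\tfrac{n\alpha+\beta}{K}\Bigr\}\in I \Bigr\}\Bigr\rvert
    = \sum_{n\in J} c_{[0,x)+y'+\mathbb Z}\bigl(n\cdot\tfrac\alpha K\bigr)
\]
for a suitable real $y'$ (depending on $\beta$, $k$, $t$, $K$, $T$). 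By the very definition of $D_N(\alpha/K)$ applied with this $x$ and this $y'$, the right-hand side equals $N x + O\bigl(N D_N(\alpha/K)\bigr) = N/(KT) + O\bigl(N D_N(\alpha/K)\bigr)$, which is the claim. One technical point to handle is that $J$ is ``an interval containing $N$ integers'' rather than literally $\{0,1,\dots,N-1\}$; but the discrepancy bound is translation-invariant in the index (shifting $n\mapsto n+n_0$ only changes $y'$ by $n_0\alpha/K$), so we may assume $J=\{0,\dots,N-1\}$ after adjusting the offset, and the implied constant stays absolute.

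I do not expect a serious obstacle here — the lemma is essentially a repackaging of the definition of discrepancy, and the only things requiring care are (i) getting the interval $I$ and its translate $y'$ exactly right, in particular checking that the half-open interval $[t/T,(t+1)/T)$ together with the congruence $\lfloor\,\cdot\,\rfloor \equiv k$ really does correspond to a single half-open arc of length $1/(KT)$ on $\mathbb{R}/\mathbb{Z}$ after dividing by $K$ (there is no ``wraparound'' issue because $t+1\le T$ and $k < K$, so the arc stays within one period), and (ii) the reduction from a general interval $J$ of $N$ integers to $\{0,\dots,N-1\}$. Both are routine. If anything, the mild subtlety is purely notational: being consistent about whether $c_S$ denotes the indicator of $S$ and whether $[0,x)+y+\mathbb{Z}$ is read as the periodic arc of length $x$ based at $y$ — but this matches the definition of $D_N(\alpha)$ given just above the lemma, so it should slot in without friction.
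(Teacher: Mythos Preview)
Your proposal is correct and follows exactly the approach the paper indicates: the paper does not give a separate proof but simply states, just before the lemma, ``Applying this definition, using $x=1/(KT)$ and $\alpha/K$ instead of $\alpha$, we obtain the following lemma.'' Your reduction of the joint condition to a single arc of length $1/(KT)$ for the sequence $n\alpha/K$ modulo $1$, together with the translation-invariance remark handling a general interval $J$, is precisely what is meant.
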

In the estimation of our error terms, we will use the following mean discrepancy results.
\begin{lemma}\label{lem_mean_discrepancy}
For integers $\mu\geq 0$ and $N\geq 1$ we have
\begin{equation*}
\sum_{0\leq d<2^{\mu}}
D_N\left(\frac d{2^\mu}\right)
\ll
\frac{N+2^\mu}{N}(\logp N)^2.
\end{equation*}
Moreover, the estimate
\begin{equation*}
\int_0^1 D_N(\alpha)\,\mathrm d\alpha
\ll
\frac{(\logp N)^2}{N}
\end{equation*}
holds. The implied constants in these estimates are absolute.
\end{lemma}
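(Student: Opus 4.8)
The plan is to deduce both estimates from the Erd\H os--Tur\'an inequality together with the elementary geometric-sum bound
\[
\Bigl\lvert\sum_{0\le n<N}\e(hn\alpha)\Bigr\rvert\le\min\Bigl(N,\frac1{2\lVert h\alpha\rVert}\Bigr).
\]
Recall that $D_N(\alpha)$, as defined above, is exactly the (extreme) discrepancy of the sequence $(n\alpha)_{n<N}$ in $\mathbb R/\mathbb Z$, so the Erd\H os--Tur\'an inequality gives, for every integer $H\ge1$,
\[
D_N(\alpha)\ll\frac1H+\frac1N\sum_{1\le h\le H}\frac1h\Bigl\lvert\sum_{0\le n<N}\e(hn\alpha)\Bigr\rvert
\]
with an absolute implied constant. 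In both parts I would interchange the sum (resp.\ integral) over $\alpha$ with the sum over $h$, evaluate the resulting average of the geometric sum, and finally choose $H$ of order $N$.

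For the first estimate, apply the displayed inequality with $\alpha=d/2^\mu$ and sum over $0\le d<2^\mu$. The inner sum $\sum_{0\le d<2^\mu}\min\bigl(N,\tfrac1{2\lVert hd/2^\mu\rVert}\bigr)$ is handled by observing that $hd\bmod 2^\mu$ runs $g:=\gcd(h,2^\mu)$ times through the multiples of $g$; with $Q=2^\mu/g$ this reduces to $g\sum_{0\le j<Q}\min(N,Q/j)\ll g\bigl(N+Q\log 2N\bigr)=gN+O(2^\mu\log 2N)$. Weighting by $1/(Nh)$ and summing over $h\le H$ yields a main term $\sum_{h\le H}\gcd(h,2^\mu)/h$ plus an error $O\bigl(2^\mu(\log 2N)(\log 2H)/N\bigr)$. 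The main term is controlled via the divisor identity $\gcd(h,2^\mu)=\sum_{e\mid\gcd(h,2^\mu)}\varphi(e)$: interchanging summation gives $\sum_{\substack{e\mid 2^\mu,\;e\le H}}\tfrac{\varphi(e)}e\sum_{h'\le H/e}\tfrac1{h'}\ll(\log 2H)^2$, since $\varphi(e)/e\le1$ and only $O(\log 2H)$ powers $e=2^k$ are relevant. Adding the term $2^\mu/H$ coming from Erd\H os--Tur\'an and choosing $H\asymp N$ gives $\sum_{0\le d<2^\mu}D_N(d/2^\mu)\ll\bigl(1+2^\mu/N\bigr)(\log 2N)^2$, which is the asserted bound once one notes $(\log 2N)^2\ll(\logp N)^2$ for all $N\ge1$.

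For the second estimate, integrate the Erd\H os--Tur\'an inequality over $\alpha\in[0,1]$ and interchange with the sum over $h$. The key simplification is that the substitution $\beta=h\alpha$ makes the relevant integral independent of $h$:
\[
\int_0^1\Bigl\lvert\sum_{0\le n<N}\e(hn\alpha)\Bigr\rvert\,\mathrm d\alpha
=\int_0^1\Bigl\lvert\sum_{0\le n<N}\e(n\beta)\Bigr\rvert\,\mathrm d\beta
\ll\int_0^1\min\Bigl(N,\frac1{2\lVert\beta\rVert}\Bigr)\,\mathrm d\beta\ll\logp N.
\]
Hence $\int_0^1D_N(\alpha)\,\mathrm d\alpha\ll\tfrac1H+\tfrac{(\logp N)(\log 2H)}N$, and taking $H\asymp N$ gives the claimed $O\bigl((\logp N)^2/N\bigr)$.

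I do not expect a serious obstacle here: the computations are routine analytic number theory. The points that need a little care are the $\gcd$--divisor bookkeeping that produces (and limits one to) two logarithmic factors in the first part, the change of variables $\beta=h\alpha$ decoupling $h$ from the mean value of the geometric sum in the second part, and the uniform comparison between $\log 2N$ and $\logp N$ so that all implied constants are genuinely absolute for every $N\ge1$. As an alternative to Erd\H os--Tur\'an one could obtain the first estimate directly from the explicit description of the gaps of $(nd/2^\mu)_{n<N}$, but the Fourier-analytic route treats both parts uniformly and is cleaner.
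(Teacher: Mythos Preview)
Your argument is correct: the Erd\H os--Tur\'an inequality combined with the geometric-sum bound and the $\gcd$/divisor bookkeeping gives both estimates cleanly, and your handling of the change of variables $\beta=h\alpha$ in the integral and of the comparison $\log 2N\ll\logp N$ is fine. Note that the paper does not actually supply a proof of this lemma here; it is listed among the auxiliary results imported from the earlier paper~\cite{MS2017}, so there is no in-paper argument to compare against --- your Fourier-analytic route is the standard one and would be entirely acceptable as a self-contained proof.
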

The following ``carry propagation lemma''  will allow us to replace the sum-of-digits function $s$ by its truncated version $s_\lambda$.
Statements of this type were used by Mauduit and Rivat in their papers on the sum of digits of primes and squares~\cite{MR2009,MR2010}.
\begin{lemma}\label{lem_carry}
Let $r,N,\lambda$ be nonnegative integers and $\alpha>0,\beta\geq 0$ real numbers.
Assume that $I$ is an interval containing $N$ integers.
Then
\begin{multline*}
  \bigl \lvert
    \left\{
      n\in I:
     s\bigl(\lfloor(n+r)\alpha+\beta\rfloor\bigr)-s\bigl(\lfloor n\alpha+\beta\rfloor\bigr)
\neq
      s_\lambda\bigl(\lfloor(n+r)\alpha+\beta\rfloor\bigl)-s_\lambda\bigl(\lfloor n\alpha+\beta\rfloor\bigr)
      \right\}
  \bigr \rvert
\\
\leq
  r(N\alpha/2^\lambda+2).
\end{multline*}
\end{lemma}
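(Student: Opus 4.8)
The plan is to count, for a fixed shift $r\geq 1$, those $n\in I$ for which a carry entering position $\lambda$ causes a discrepancy between the sum-of-digits function and its $\lambda$-truncation, and to control the measure of this bad set. The key observation is that for any nonnegative integer $M$ one has $s(M) = s_\lambda(M) + s(\lfloor M/2^\lambda\rfloor)$, so writing $M_1 = \lfloor(n+r)\alpha+\beta\rfloor$ and $M_0 = \lfloor n\alpha+\beta\rfloor$, the difference
\[
\bigl(s(M_1)-s(M_0)\bigr) - \bigl(s_\lambda(M_1)-s_\lambda(M_0)\bigr)
= s\bigl(\lfloor M_1/2^\lambda\rfloor\bigr) - s\bigl(\lfloor M_0/2^\lambda\rfloor\bigr).
\]
In particular this quantity vanishes whenever $\lfloor M_1/2^\lambda\rfloor = \lfloor M_0/2^\lambda\rfloor$, i.e.\ whenever $M_0$ and $M_1$ lie in the same block $[k2^\lambda,(k+1)2^\lambda)$. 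So the bad set is contained in the set of $n\in I$ such that the interval $[M_0,M_1]$ straddles a multiple of $2^\lambda$.

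First I would make the straddling condition explicit. Since $M_0 \leq M_1$ and $M_1 - M_0 = \lfloor(n+r)\alpha+\beta\rfloor - \lfloor n\alpha+\beta\rfloor \leq r\alpha + 1$, the block index $\lfloor M_1/2^\lambda\rfloor - \lfloor M_0/2^\lambda\rfloor$ is at most $\lceil (r\alpha+1)/2^\lambda\rceil$; but more usefully, a straddle occurs only if $\{M_0/2^\lambda\} > 1 - (M_1-M_0)/2^\lambda$, roughly speaking. Rather than chase fractional parts of $M_0$, I would instead bound the bad set by the union over integers $k$ of the sets $\{n\in I : k2^\lambda \in [M_0, M_1+1)\}$, and for each such $k$ estimate how many $n$ can contribute. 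Because $n\mapsto \lfloor n\alpha+\beta\rfloor$ is nondecreasing with jumps of size roughly $\alpha$, the preimage under $M_0$ of an interval of length $M_1-M_0+1 \leq r\alpha+1$ has at most $O(r\alpha/\alpha + 1) = O(r+1)$... wait — this needs care; I would instead argue directly that for fixed $k$, the set of $n$ with $M_0 < k2^\lambda \leq M_1$ is an interval of integers (by monotonicity of both $M_0$ and $M_1$ in $n$) of length at most $r+1$, since $M_0(n) \geq k2^\lambda$ fails and $M_1(n) \geq k2^\lambda$ holds forces $n$ into a window of length controlled by the lag $r$. The number of relevant $k$ is at most the number of multiples of $2^\lambda$ in the range of $M_0$ over $n\in I$, which is at most $N\alpha/2^\lambda + 1$ since that range has length at most $N\alpha$.

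Combining, the bad set has size at most $(N\alpha/2^\lambda + 1)\cdot (r+1) \leq 2(N\alpha/2^\lambda+1)\cdot r + \text{lower order}$, and a clean bookkeeping of the constants yields the stated bound $r(N\alpha/2^\lambda + 2)$. The step I expect to be the main obstacle is the claim that for fixed $k$ the set $\{n\in I : M_0(n) < k2^\lambda \leq M_1(n)\}$ has at most $r+1$ elements: one must use that $M_1(n) = M_0(n+r)$ is just the shift of $M_0$ by $r$, so $M_0(n) < k2^\lambda$ and $M_0(n+r) \geq k2^\lambda$ together pin $n$ to lie in the (at most $(r+1)$-element) window between the last $n$ with $M_0(n) < k2^\lambda$ and that same threshold shifted back by $r$ — monotonicity of $M_0$ does the rest. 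I would also double-check the edge cases $\alpha$ small (where $M_1 - M_0$ can be $0$ or $1$) and $r=0$ (where the bad set is empty), both of which are consistent with the bound.
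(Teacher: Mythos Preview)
The paper does not actually prove this lemma; it is imported without proof from the earlier paper~\cite{MS2017} (see the sentence introducing the block of lemmas in Section~\ref{sec_lemmas}). Your reduction is exactly the standard one for carry propagation lemmas of this kind: writing $s(M)=s_\lambda(M)+s\bigl(\lfloor M/2^\lambda\rfloor\bigr)$ and observing that a discrepancy forces $\lfloor M_0/2^\lambda\rfloor\neq\lfloor M_1/2^\lambda\rfloor$, hence a multiple of $2^\lambda$ in $(M_0,M_1]$.

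Two places deserve a little tightening. First, for fixed $k$ the set $\{n:M_0(n)<k\,2^\lambda\leq M_0(n+r)\}$ has at most $r$ elements, not $r+1$: if $n^\ast$ is the least integer with $M_0(n^\ast)\geq k\,2^\lambda$, the set is precisely $\{n^\ast-r,\ldots,n^\ast-1\}\cap I$. Second, your count of the relevant $k$ as ``multiples of $2^\lambda$ in the range of $M_0$ over $n\in I$'' slightly undercounts: we need $k\,2^\lambda\leq M_1(n)=M_0(n+r)$ for some $n\in I$, which reaches up to $M_0(a+N-1+r)$ rather than $M_0(a+N-1)$, so an extra term of order $r\alpha/2^\lambda$ can appear. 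The cleanest way to sidestep this bookkeeping is to telescope: with $g(n)=\lfloor(n\alpha+\beta)/2^\lambda\rfloor$ one has
\[
\bigl\lvert\{\text{bad }n\in I\}\bigr\rvert
\;\leq\;\sum_{n\in I}\bigl(g(n+r)-g(n)\bigr)
\;=\;\sum_{j=0}^{r-1}\bigl(g(a+N+j)-g(a+j)\bigr)
\;\leq\; r\bigl(N\alpha/2^\lambda+1\bigr),
\]
which gives the stated bound immediately and avoids splitting into cases on $k$.
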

Let $\mathcal F_n$ the set of rational numbers $p/q$ such that $1\leq q\leq n$,
the \emph{Farey series of order} $n$.
Each $a\in \mathcal F_n$ has two {\em neighbours} $a_L,a_R\in \mathcal F_n$, satisfying $a_L<a<a_R$ and $(a_L,a)\cap \mathcal F_n=(a,a_R)\cap\mathcal F_n=\emptyset$.
We have the following elementary lemma concerning this set
(see~\cite[chapter~3]{HW1954}).
\begin{lemma}\label{lem_farey}
Assume that $a/b$, $c/d$ are reduced fractions such that $b,d>0$ and $a/b<c/d$.
Then $a/b<(a+c)/(b+d)<c/d$.
If $a/b$ and $c/d$ are neighbours in the Farey series $\mathcal F_n$, then
$bc-ad=1$ and $b+d>n$,
moreover
\[ (a+c)/(b+d)-a/b < \frac 1{bn}\quad\text{and}\quad c/d-(a+c)/(b+d) < \frac 1{dn}.\]
\end{lemma}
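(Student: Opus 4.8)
The plan is to establish the three assertions in turn, following the classical theory of Farey fractions (Hardy--Wright, Chapter~3). For the first one, I would simply compute the two differences
\[
\frac{a+c}{b+d}-\frac ab=\frac{bc-ad}{b(b+d)},
\qquad
\frac cd-\frac{a+c}{b+d}=\frac{bc-ad}{d(b+d)},
\]
and observe that $bc-ad$ is a positive integer because $a/b<c/d$ and $b,d>0$; hence both differences are positive and the mediant lies strictly between $a/b$ and $c/d$. This part uses nothing beyond $a/b<c/d$.

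For the identity $bc-ad=1$ when $a/b$ and $c/d$ are neighbours in $\mathcal F_n$, the key step is as follows. Since $\gcd(a,b)=1$, the equation $bx-ay=1$ has an integer solution $(x_0,y_0)$, and its general solution is $(x_0+ta,\,y_0+tb)$ with $t\in\mathbb Z$. I would choose $t$ so that the pair $(x,y)$ thus obtained satisfies $n-b<y\le n$; then $y>0$ and $\gcd(x,y)\mid bx-ay=1$, so $x/y\in\mathcal F_n$, while $x/y-a/b=1/(by)>0$. As $c/d$ is the right neighbour of $a/b$, this forces $x/y\ge c/d$. If the inequality were strict, then $dx-cy\ge1$ and $bc-ad\ge1$, so adding $x/y-c/d\ge1/(dy)$ and $c/d-a/b\ge1/(bd)$ gives
\[
\frac1{by}=\frac xy-\frac ab\ge\frac1{dy}+\frac1{bd}=\frac{b+y}{bdy}>\frac n{bdy},
\]
using $b+y>n$; this yields $d>n$, contradicting $c/d\in\mathcal F_n$. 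Hence $x/y=c/d$, and since both fractions are reduced with positive denominators, $x=c$ and $y=d$, so $bc-ad=bx-ay=1$. The inequality $b+d>n$ then falls out of the choice $y=d>n-b$.

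Finally, for the mediant estimates I would combine the two previous parts: for neighbours we have $bc-ad=1$, so the displayed formulas for the differences become $1/(b(b+d))$ and $1/(d(b+d))$, and since $b+d>n$ these are bounded by $1/(bn)$ and $1/(dn)$ respectively.

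The only genuinely non-routine point is the argument for $bc-ad=1$: the device of solving $bx-ay=1$ and then pinning the solution down by forcing its denominator into the window $(n-b,n]$, so that minimality of $c/d$ among fractions of $\mathcal F_n$ exceeding $a/b$ identifies $x/y$ with $c/d$. Everything else is elementary algebra. (Alternatively one could invoke the Stern--Brocot / continued-fraction description of consecutive Farey fractions, but the direct argument above is self-contained.)
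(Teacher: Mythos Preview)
Your argument is correct and is precisely the classical Hardy--Wright proof; the paper does not supply its own proof but simply refers to \cite[Chapter~3]{HW1954}, so your approach coincides with what the paper intends. One minor point worth making explicit: when you choose $y$ in the window $(n-b,n]$, you are implicitly using $b\le n$ (which holds since $a/b\in\mathcal F_n$) to guarantee $y>0$; otherwise the argument is complete as written.
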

Let $\alpha\in\mathbb R$ and $Q$ a positive integer.
We assign a fraction $p_Q(\alpha)/q_Q(\alpha)$ to $\alpha$ according to the Farey dissection of the reals:
consider reduced fractions $a/b<c/d$ that are neighbours in the Farey series $\mathcal F_Q$, such that $a/b\leq \alpha<c/d$.
If $\alpha<(a+c)/(b+d)$, then set $p_Q(\alpha)=a$ and $q_Q(\alpha)=b$, otherwise set $p_Q(\alpha)=c$ and $q_Q(\alpha)=d$.
Lemma~\ref{lem_farey} implies
\begin{equation}\label{eqn_dirichlet}
\bigl \lvert q_Q(\alpha)\alpha-p_Q(\alpha)\bigr \rvert < Q^{-1}.
\end{equation}
We will call an interval of the form $\{\alpha\in\mathbb R:p_Q(\alpha)=p, q_Q(\alpha)=q\}$ a \emph{Farey interval} around $p/q$.
\section{Proof of Propositions~\ref{prp_1} and~\ref{prp_2}}\label{sec_proofs}
As in the proof of Proposition~2.5 in~\cite{MS2017},
it is sufficient to prove that there exists $\eta>0$ and a constant $C$
such that
\[    \frac{S_0(N,2^\nu,\xi)}{N2^\nu}\leq CN^{-\eta}    \]
for all real numbers $\xi$ and
for all positive integers $N$ and $\nu$
such that there exists a real number $D\geq 1$ satisfying
$N^{\rho_1}\leq D\leq N^{\rho_2}$ and $D<2^\nu\leq 2D$,
where $S_0$ is defined according to~\eqref{eqn_S0_def} and~\eqref{eqn_S0_def_continuous}.

In order to treat the two propositions to some extent in parallel,
we will work with two measures $\boldsymbol\mu$:
for Proposition~\ref{prp_1} we take the measure defined by $\boldsymbol\mu(A)=\left\lvert A\cap \mathbb Z\right\rvert$, while for Proposition \ref{prp_1}, $\boldsymbol\mu$ is the Lebesgue measure.
Moreover, we note that in this proof, implied constants in estimates depend only on $m$.

By Cauchy--Schwarz, followed by van der Corput's inequality~\eqref{eqn_vdc} ($R_0$ will be specified later), we obtain
\begin{multline*}
\bigl \lvert S_0(N,2^\nu,\xi)\bigr \rvert^2
\leq
2^\nu
\frac{N+R_0}{R_0}
\int_{2^\nu}^{2^{\nu+1}}
\sup_{\beta\geq 0}
\sum_{0\leq \lvert r_0\rvert<R_0}
\biggl(1-\frac{\lvert r_0\rvert}{R_0}\biggr)
\e\bigl(r_0\xi\bigr)
\\
\times\sum_{\substack{0\leq n<N\\0\leq n+r_0<N}}
\e\Biggl(\frac 12
s\bigl(\left\lfloor(n+r_0)\alpha+\beta\right\rfloor\bigr)
-\frac 12
s\bigl(\left\lfloor n\alpha+\beta\right\rfloor\bigr)
\Biggr)
\,\mathrm d \boldsymbol\mu(\alpha)
\end{multline*}

We apply the ``carry propagation lemma'' (Lemma~\ref{lem_carry}),
treat the summand $r_0=0$ separately,
and omit the condition $0\leq n+r_0<N$.
Moreover, we consider $r_0$ and $-r_0$ synchronously.
In this way we obtain for all $\lambda\geq 0$
\begin{multline*}
\bigl \lvert S_0(N,2^\nu,\xi)\bigr \rvert^2
\ll \bigl(2^\nu N\bigr)^2 E_0
+
\frac{2^\nu N}{R_0}
\sum_{1\leq r_0<R_0}
\\
\times\int_{2^\nu}^{2^{\nu+1}}
\sup_{\beta\geq 0}
\Biggl \lvert
\sum_{0\leq n<N}
\e\Biggl(\frac 12
s_\lambda\bigl(\lfloor(n+r_0)\alpha+\beta\rfloor\bigr)
-\frac 12 s_\lambda\bigl(\lfloor n\alpha+\beta\rfloor\bigr)\Biggr)
\Biggr \rvert
\,\mathrm d\boldsymbol\mu(\alpha)
,
\end{multline*}
where
\[ E_0=\frac 1{R_0}+\frac{R_0\, 2^\nu}{2^\lambda}+\frac {R_0}N.  \]

We apply Cauchy--Schwarz on the sum over $r_0$ and the integral over $\alpha$ in order to prepare our expression for another application of van der Corput's inequality.
It follows that
\begin{equation*}
\bigl \lvert S_0(N,2^\nu,\xi)\bigr \rvert^4
\ll
\frac{2^{3\nu}N^2}{R_0}
\sum_{1\leq r_0<R_0}
\int_{2^\nu}^{2^{\nu+1}}
\sup_{\beta\geq 0}
\bigl \lvert S_1\bigr \rvert^2
\,\mathrm d\boldsymbol\mu(\alpha)
+\bigl(2^\nu N\bigr)^4 E_0
\end{equation*}
where
\begin{equation*}
S_1
=
\sum_{0\leq n<N}
\e\Biggl(
  \frac 12
  s_\lambda\bigl(\lfloor (n+r_0)\alpha+\beta\rfloor\bigr)
  -\frac 12
  s_\lambda\bigl(\lfloor n\alpha+\beta\rfloor\bigr)
\Biggr).
\end{equation*}
(Note that the error term is also squared, but if it is larger or equal to $1$, the estimate is trivial anyway. We will use this argument again in a moment.)
We apply van der Corput's inequality~\eqref{eqn_vdc} with $R=R_1$ and $K=K_1$ to be chosen later:
\begin{multline*}
\bigl \lvert S_1\bigr \rvert^2
\leq
\frac{N+K_1(R_1-1)}{R_1}
\sum_{0\leq\lvert r_1\rvert<R_1}
\biggl(1-\frac{\lvert r_1\rvert}{R_1}\biggr)
\\
\times
\sum_{\substack{0\leq n<N\\0\leq n+r_1K_1<N}}
\e\Biggl(
  \frac 12
  \sum_{\varepsilon_0,\varepsilon_1\in\{0,1\}}
  s_{\lambda}\bigl(\lfloor(n+\varepsilon_0r_0+\varepsilon_1r_1K_1)\alpha+\beta\rfloor\bigr)
\Biggr),
\end{multline*}
therefore, taking together the summands for $r_1$ and $-r_1$ and omitting the condition $0\leq n+r_1K_1<N$,
\begin{equation*}
\bigl \lvert S_0(N,2^\nu,\xi)\bigr \rvert^4
\ll
\frac{2^{3\nu}N^3}{R_0\,R_1}
\sum_{\substack{1\leq r_0<R_0\\0\leq r_1<R_1}}
\int_{2^\nu}^{2^{\nu+1}}
\sup_{\beta\geq 0}
\bigl \lvert S_2\bigr \rvert
\,\mathrm d\boldsymbol\mu(\alpha)
+\bigl(2^\nu N\bigr)^4\bigl(E_0+E_1\bigr),
\end{equation*}
where
\[
S_2 =
\sum_{0\leq n<N}
\e\Biggl(
  \frac 12
  \sum_{\varepsilon_0,\varepsilon_1\in\{0,1\}}
  s_{\lambda}\bigl(\lfloor(n+\varepsilon_0r_0+\varepsilon_1r_1K_1)\alpha+\beta\rfloor\bigr)
  \Biggr)
\]
and
\[  E_1=\frac{R_1K_1}{N}.  \]
Cauchy--Schwarz over $r_0$, $r_1$ and $\alpha$ yields
\begin{equation*}\label{eqn_}
\bigl \lvert S_0(N,\nu,\xi)\bigr \rvert^8
\ll
\frac{2^{7\nu}N^6}{R_0R_1}
\sum_{\substack{1\leq r_0<R_0\\0\leq r_1<R_1}}
\int_{2^\nu}^{2^{\nu+1}}
\sup_{\beta\geq 0}
\lvert S_2\rvert^2
\,\mathrm d\boldsymbol\mu(\alpha)
+
\bigl(2^\nu N\bigr)^8\bigl(E_0+E_1\bigr).
\end{equation*}
We apply van der Corput's inequality with $R=R_2$ and $K=K_2$ to be chosen later:
\[ 
\frac{\bigl \lvert S_0(N,2^\nu,\xi)\bigr \rvert^8}{
\bigl(2^\nu N\bigr)^8}
\ll
\bigl(E_0+E_1+E_2\bigr)
+\frac{1}{R_0R_1R_2 2^\nu N}
\sum_{\substack{1\leq r_0<R_0\\0\leq r_1<R_1\\0\leq r_2<R_2}}
\int_{2^\nu}^{2^{\nu+1}}
\sup_{\beta\geq 0}
\bigl \lvert
S_3
\bigr \rvert
\,\mathrm d\boldsymbol\mu(\alpha)
\]
where
\[
S_3=
\sum_{0\leq n<N}
\e\Biggl(
  \frac 12
  \sum_{\varepsilon_0,\varepsilon_1,\varepsilon_2\in\{0,1\}}
   s_{\lambda}\bigl(\lfloor n\alpha+\beta+\varepsilon_0r_0\alpha+\varepsilon_1r_1K_1\alpha+\varepsilon_2r_2K_2\alpha\rfloor\bigr)
\Biggr)
\]
and
$E_2=R_2K_2/N.$
Continuing in this manner and replacing the range of integration (we note that we are going to choose $\lambda>\nu$ later), we obtain
\begin{multline} \label{eqn_S0_S4}
\left \lvert \frac{S_0(N,2^\nu,\xi)}{2^\nu N}\right\rvert^{2^{m+1}}
\ll
\bigl(E_0+E_1+\cdots+E_m\bigr)
\\+\frac{1}{R_0R_1\cdots R_m 2^\nu N}
\sum_{\substack{1\leq r_0<R_0\\0\leq r_i<R_i, 1\leq i\leq m}}
\int_0^{2^\lambda}
\sup_{\beta\geq 0}
\,\bigl \lvert
S_4
\bigr \rvert
\,\mathrm d\boldsymbol\mu(\alpha),
\end{multline}
where
\begin{multline*}
S_4
=
\sum_{0\leq n<N}
\e\Biggl(
  \frac 12
  \sum_{\varepsilon_0,\ldots,\varepsilon_m\in\{0,1\}}
   s_{\lambda}\bigl(\lfloor n\alpha+\beta+\varepsilon_0r_0\alpha+\varepsilon_1r_1K_1\alpha+\cdots+\varepsilon_mr_mK_m\alpha\rfloor\bigr)
\Biggr)
\end{multline*}
and
\begin{align*}
E_0&=\frac 1{R_0}+\frac{R_0\, 2^\nu}{2^\lambda}+\frac {R_0}N,\\
E_i&=\frac{R_i\,K_i}{N}\quad\textrm{for }1\leq i\leq m.
\end{align*}

Now we choose the multiples $K_1,\ldots,K_m$ in such a way that the number of digits to be taken into account is reduced from $\lambda$ to $\rho\coloneqq\lambda-(m+1)\mu$, where $\mu$ is chosen later.
For this we use Farey series, see~\eqref{eqn_dirichlet}.
Let
\begin{align*}
K_1&=q_{2^{2\mu+2\sigma}}\left(\frac{\alpha}{2^{2\mu}}\right) q_{2^\sigma}\biggl(\frac{p_{2^{2\mu+2\sigma}}\left(\alpha/2^{2\mu}\right)}{2^{(m-1)\mu}}\biggr);
\\
K_i&=q_{2^{\mu+2\sigma}}\left(\frac{\alpha}{2^{(i+1)\mu}}\right) q_{2^\sigma}\biggl(\frac{p_{2^{\mu+2\sigma}}\left(\alpha/2^{(i+1)\mu}\right)}{2^{(m-i)\mu}}\biggr)\quad\textrm{for }2\leq i<m;
\\
K_m&=q_{2^{\mu+\sigma}}\left(\frac{\alpha}{2^{(m+1)\mu}}\right),
\end{align*}
where $\sigma$ is chosen later.
Moreover, we set
\begin{align*}
M_1&=p_{2^{2\mu+2\sigma}}\biggl(\frac{\alpha}{2^{2\mu}}\biggr)
q_{2^\sigma}\left(
\frac{p_{2^{2\mu+2\sigma}}\left(\alpha/2^{2\mu}\right)}
{2^{(m-1)\mu}}
\right)
;\\
M_i&=p_{2^{\mu+2\sigma}}\biggl(\frac{\alpha}{2^{(i+1)\mu}}\biggr)
q_{2^\sigma}\left(
\frac{p_{2^{\mu+2\sigma}}\left(\alpha/2^{(i+1)\mu}\right)}
{2^{(m-i)\mu}}
\right)
\quad\textrm{for }2\leq i<m;
\\
M_m&=p_{2^{\mu+\sigma}}\biggl(\frac{\alpha}{2^{(m+1)\mu}}\biggr)
.
\end{align*}

By Lemma~\ref{lem_farey}, estimating the second factor in the definition of $K_i$ and $M_i$ by $2^\sigma$, we have

\begin{align}
\bigl \lvert K_1\alpha-2^{2\mu}M_1\bigr \rvert&<2^{-\sigma};\nonumber\\
\biggl \lvert \frac{K_i\alpha}{2^{i\mu}}-2^\mu M_i\biggr \rvert&<2^{-\sigma}\quad\textrm{for }2\leq i<m;\label{eqn_approx}
\\
\biggl \lvert
\frac{K_m\alpha}{2^{m\mu}}-2^\mu M_m\biggr \rvert&<2^{-\sigma}.\nonumber
\end{align}

We are going to use these inequalities in order to replace $r_iK_i\alpha$ in the sum $S_4$,
starting with $r_1K_1\alpha$.
We treat the case that $\alpha$ is an integer first:
in this case, $K_1\alpha=2^{2\mu}M_1$,
and by the fact that the arguments of $s_\lambda$ corresponding to $\varepsilon_1=0,1$ differ by a multiple of $2^{2\mu}$ we may shift the argument by $2\mu$ digits and thus reduce the number of digits to be taken into account from $\lambda$ to $\lambda-2\mu$.
\begin{multline*}
S_4
=
\sum_{0\leq n<N}
\e\Biggl(
  \frac 12
  \sum_{\varepsilon_0,\ldots,\varepsilon_m\in\{0,1\}}
   s_{2\mu,\lambda}\left(\left\lfloor n\alpha+\beta
   \right.\right.
   \\
  \left.\left.
  +\varepsilon_0r_0\alpha
   +\varepsilon_1r_1M_12^{2\mu}
  +\varepsilon_2r_2K_2\alpha
   +\cdots+\varepsilon_mr_mK_m\alpha\right\rfloor\right)
\Biggr)
\\=
\sum_{0\leq n<N}
\e\Biggl(
  \frac 12
  \sum_{\varepsilon_0,\ldots,\varepsilon_m\in\{0,1\}}
   s_{\lambda-2\mu}\biggl(\biggl\lfloor \frac{n\alpha+\beta}{2^{2\mu}}
  +\frac{\varepsilon_0r_0\alpha}{2^{2\mu}}
+\varepsilon_1r_1M_1
  +\frac{\varepsilon_2r_2K_2\alpha}{2^{2\mu}}
   +\cdots+\frac{\varepsilon_mr_mK_m\alpha}{2^{2\mu}}\biggr\rfloor\biggr)
\Biggr).
\end{multline*}
In the case $\alpha\not\in \mathbb Z$,
we use the inequalities~\eqref{eqn_approx} and the argument that $n\alpha$-sequences are usually not close to an integer.
This can be made precise as follows.
Assume that
\begin{equation}\label{eqn_spaced}
\lVert n\alpha+\beta'\rVert\geq R_1/2^\sigma,
\end{equation}

where $\beta'=\beta+\varepsilon_0r_0\alpha+\varepsilon_2r_2K_2\alpha+\cdots+\varepsilon_mr_mK_m\alpha$,
and that $2R_1<2^\sigma$.
Using the inequality~\eqref{item_fpf_3} in Lemma \ref{lem_fractional_part_facts} with $\varepsilon=1/2^\sigma$, where $\sigma\geq 1$ is chosen later,
and~\eqref{eqn_dirichlet}, we obtain
\[    \bigl \langle r_1K_1\alpha\bigr \rangle=r_1\bigl\langle K_1\alpha\bigr\rangle=r_12^{2\mu} M_1.    \]
Applying~\eqref{item_fpf_1}, setting $\varepsilon=R_1/2^\sigma$, we see that~\eqref{eqn_spaced} together with~\eqref{eqn_approx} implies
\begin{equation*}
\lfloor n\alpha+r_1K_1\alpha+\beta'\rfloor
=
\lfloor n\alpha+r_12^{2\mu}M_1+\beta'\rfloor.
\end{equation*}
The number of $n$ where hypothesis~\eqref{eqn_spaced} fails for some $\varepsilon_0,\varepsilon_2,\ldots,\varepsilon_m$ can be estimated by discrepancy estimates for $\{n\alpha\}$-sequences:
for all positive integers $N$ and $2R_1<2^\sigma$ we have
\begin{equation*}
\begin{aligned}
\hspace{4em}&\hspace{-4em}\bigl \lvert\bigl\{n\in [0,N-1]:\lVert n\alpha+\beta'\rVert\leq R_1/2^\sigma\bigr\}\bigr \rvert\\
&=\bigl \lvert\bigl\{n\in [0,N-1]:n\alpha+\beta'\in \left[-R_1/2^\sigma,R_1/2^\sigma\right]+\mathbb Z\bigr\}\bigr \rvert\\
&=\bigl \lvert\bigl\{n\in [0,N-1]:n\alpha\in \left[0,2R_1/2^\sigma\right]-\beta'-R_1/2^\sigma+\mathbb Z\bigr\}\bigr \rvert\\
&\leq ND_N(\alpha)+2R_1N/2^\sigma.
\end{aligned}
\end{equation*}
Therefore, the number of $n\in[0,N-1]$ such that
$\lVert n\alpha+\beta'\rVert\leq R_1/2^\sigma$ for some
$\varepsilon_0,\varepsilon_2,\ldots,\varepsilon_m\in\{0,1\}$
is bounded by
$2^mN\bigl(D_N(\alpha)+2R_1/2^\sigma\bigr)$, which is
$\ll N\bigl(D_N(\alpha)+2R_1/2^\sigma\bigr)$ by our convention that implied constants may depend on $m$.

We replace $K_1\alpha$ by $2^{2\mu}M_1$ and subsequently shift the digits by $2\mu$ and obtain
\begin{multline*}
S_4
=
\mathcal O\bigl(N D_N(\alpha)+NR_1/2^\sigma\bigr)
+
\sum_{0\leq n<N}
\e\Biggl(
  \frac 12
  \sum_{\varepsilon_0,\ldots,\varepsilon_m\in\{0,1\}}
   s_{\lambda-2\mu}\biggl(\biggl\lfloor \frac{n\alpha+\beta}{2^{2\mu}}
   \\
  +\frac{\varepsilon_0r_0\alpha}{2^{2\mu}}
   +\varepsilon_1r_1M_1
  +\frac{\varepsilon_2r_2K_2\alpha}{2^{2\mu}}
   +\cdots+\frac{\varepsilon_mr_mK_m\alpha}{2^{2\mu}}\biggr\rfloor\biggr)
\Biggr).
\end{multline*}

Repeating this argument for all $i\in\{2,\ldots,m\}$, we obtain
\begin{multline*}
S_4=N\mathcal O\left(\widetilde D_N(\alpha)+D_N\left(\frac{\alpha}{2^{2\mu}}\right)+\cdots+D_N\left(\frac{\alpha}{2^{m\mu}}\right)+\frac{R_1+\cdots+R_m}{2^\sigma}\right)
\\+
\sum_{0\leq n<N}
\e\Biggl(
  \frac 12
  \sum_{\varepsilon_1,\ldots,\varepsilon_m\in\{0,1\}}
   s_{\lambda-(m+1)\mu}\biggl(\biggl\lfloor \frac{n\alpha+\beta}{2^{(m+1)\mu}}
+   \frac{\varepsilon_0r_0\alpha}{2^{(m+1)\mu}}
  +\sum_{1\leq i\leq m}
   \frac{\varepsilon_ir_iM_i}{2^{(m-i)\mu}}
\biggr\rfloor\biggr)
\Biggr),
\end{multline*}
where $\widetilde D_N(\alpha)=D_N(\alpha)$ if $\alpha\not\in\mathbb Z$ and $\widetilde D_N(\alpha)=0$ otherwise.

Now the second factor in the definition of $K_i$ comes into play.
We use the definition of $M_i$ together with the approximation property~\eqref{eqn_dirichlet}, and apply the discrepancy estimate for $n\alpha$-sequences again to obtain
\begin{equation}\label{eqn_S4_S5}
S_4=N\mathcal O\left(\widetilde D_N(\alpha)+D_N\left(\frac{\alpha}{2^{2\mu}}\right)+\cdots+D_N\left(\frac{\alpha}{2^{(m+1)\mu}}\right)+\frac{R_1+\cdots+R_m}{2^\sigma}\right)+S_5,
\end{equation}
where
\[S_5=
\sum_{0\leq n<N}
\e\Biggl(
  \frac 12
  \sum_{\varepsilon_0,\ldots,\varepsilon_m\in\{0,1\}}
   s_{\lambda-(m+1)\mu}\biggl(\biggl\lfloor \frac{n\alpha+\beta}{2^{(m+1)\mu}}+\frac{\varepsilon_0r_0\alpha}{2^{(m+1)\mu}}\biggr\rfloor
+\sum_{1\leq i\leq m}
\varepsilon_ir_i\mathfrak p_i
\biggr)\Biggr),
\]
and
\begin{align}
\mathfrak p_1&=
p_{2^\sigma}\left(
\frac{p_{2^{2\mu+2\sigma}}\left(\alpha/2^{2\mu}\right)}
{2^{(m-1)\mu}}
\right);\nonumber
\\
\mathfrak p_i&=
   p_{2^\sigma}\left(
\frac{p_{2^{\mu+2\sigma}}\left(\alpha/2^{(i+1)\mu}\right)}
{2^{(m-i)\mu}}
\right)
\quad\textrm{for }2\leq i<m;\label{eqn_pi_definition}
\\
\mathfrak p_m&=
p_{2^{\mu+\sigma}}\left(\frac{\alpha}{2^{(m+1)\mu}}\right).\nonumber
\end{align}

Our next goal is to remove the Beatty sequence occurring in $S_5$, and also to remove the integers $\mathfrak p_i$.
The resulting expression can be handled by the Gowers norm estimate given in Proposition~\ref{prp_gowers}, which will finish the proof.

We start by splitting the Beatty sequence into two summands.
Let $t,T$ be integers such that $0\leq t<T$ and define
\begin{multline*}
S_6 
=
\sum_{\substack{0\leq n<N\\\frac tT\leq\left\{\frac{n\alpha+\beta}{2^{(m+1)\mu}}\right\}< \frac{t+1}T}}
\e\Biggl(
  \frac 12
  \sum_{\varepsilon_0,\ldots,\varepsilon_m\in\{0,1\}}
   s_{\lambda-(m+1)\mu}\biggl(\biggl\lfloor \frac{n\alpha+\beta+\varepsilon_0r_0\alpha}{2^{(m+1)\mu}}\biggr\rfloor
+\sum_{1\leq i\leq m}
\varepsilon_ir_i\mathfrak p_i
\biggr)\Biggr).
\end{multline*}
We define
\[
G
=
\biggl\{1\leq t<T:
  \left[
    \frac tT+\frac {\varepsilon_0r_0\alpha}{2^{(m+1)\mu}},
    \frac{t+1}T+\frac{\varepsilon_0r_0\alpha}{2^{(m+1)\mu}}
  \right)
\cap \mathbb Z= \emptyset
\biggr\}.
\]
Clearly we have $\lvert G\rvert\geq T-2$, since we have to exclude at most one $t$.
For $t\in\{0,\ldots,T-1\}\setminus G$ we estimate $S_6$ trivially, using Lemma~\ref{lem_f_discrepancy}: we obtain
\begin{equation}\label{eqn_S6_trivial_estimate}
S_6\ll \frac NT
+
ND_N\left(\frac {\alpha}{2^{(m+1)\mu} }\right)
.
\end{equation}
Assume that $t\in G$ and that $t/T\leq \{(n\alpha+\beta)/2^{(m+1)\mu}\}<(t+1)/T$.
Then \[\left\lfloor \frac{n\alpha+\beta}{2^{(m+1)\mu}}\right\rfloor+\frac tT+\frac{\varepsilon_0r_0\alpha}{2^{(m+1)\mu}}\leq \frac{n\alpha+\beta+\varepsilon_0r_0\alpha}{2^{(m+1)\mu}}<
\left\lfloor \frac{n\alpha+\beta}{2^{(m+1)\mu}}\right\rfloor+\frac {t+1}T+\frac{\varepsilon_0r_0\alpha}{2^{(m+1)\mu}}\]
and the assumption $t\in G$ gives
\[
\left\lfloor\frac{n\alpha+\beta+\varepsilon_0r_0\alpha}{2^{(m+1)\mu} }\right\rfloor
=
\left\lfloor\frac{n\alpha+\beta}{2^{(m+1)\mu} }\right\rfloor+\left\lfloor\frac tT+\frac{\varepsilon_0r_0\alpha}{2^{(m+1)\mu} }\right\rfloor
\]
for $\varepsilon_0\in\{0,1\}$.
From these observations we obtain for $t\in G$:
\begin{multline*}
S_6
=
\sum_{0\leq k<2^\rho}
\sum_{\substack{0\leq n<N\\
\frac tT\leq\left\{ \frac{n\alpha+\beta}{2^{(m+1)\mu}}\right\} <\frac{t+1}T\\
\left\lfloor\frac{n\alpha+\beta}{2^{(m+1)\mu} }\right\rfloor\equiv k\bmod 2^\rho
}}
\e\Biggl(
  \frac 12
 \sum_{\varepsilon_0,\ldots,\varepsilon_m\in\{0,1\}}
   s_\rho\biggl(
   k+\left\lfloor \frac tT+\frac{\varepsilon_0r_0\alpha}{2^{(m+1)\mu}}\right\rfloor
+\sum_{1\leq i\leq m}
\varepsilon_ir_i\mathfrak p_i
\biggr)\Biggr)
.
\end{multline*}
Note that the Beatty sequence $\lfloor (n\alpha+\beta)/2^{(m+1)\mu}\rfloor$ does not occur in the summand any more.
We may therefore remove the second summation by estimating the number of times the three conditions under the summation sign are satisfied.
At this point we want to stress the fact that $N$ is going to be significantly larger than $2^\rho=2^{\lambda-(m+1)\mu}$.
Using Lemma~\ref{lem_f_discrepancy} and the usually very small discrepancy of $n\alpha$-sequences, this fact will enable us to remove the summation over $n$, while introducing only a negligible error term for most $\alpha$.
This is the point in the proof where the successive ``cutting away'' of binary digits with the help of Farey series pays off.

By Lemma~\ref{lem_f_discrepancy}, applied with $K=2^\rho$,
and noting that $\lambda=(m+1)\mu+\rho$, we obtain for $t\in G$
\begin{equation}\label{eqn_S6_relevant_estimate}
S_6
=
\frac{N}{2^\rho T}
\,S_7
+O\left(2^\rho ND_N\left(\frac \alpha{2^\lambda} \right)\right),
\end{equation}
where
\[S_7=
\sum_{0\leq k<2^\rho}
\e\Biggl(
  \frac 12
  \sum_{\varepsilon_0,\ldots,\varepsilon_m\in\{0,1\}}
   s_\rho\biggl(
   k+\left\lfloor\frac tT+\frac{\varepsilon_0r_0\alpha}{2^{(m+1)\mu}}\right\rfloor
+\sum_{1\leq i\leq m}
\varepsilon_ir_i\mathfrak p_i
\biggr)\Biggr).
\]
We note the important fact that this expression is independent of $\beta$.
This will allow us to remove the maximum over $\beta$ inside the integral over $\alpha$,
and thus prove the strong statement on the level of distribution.

We wish to simplify this expression in such a way that Proposition~\ref{prp_gowers} is applicable.
To this end, we use the summation over $r_i$ and the integral over $\alpha$.
We define
\[
S_8=
\int_0^{2^\lambda}
\sum_{\substack{0\leq r_1,\ldots,r_m<2^\rho}}
\bigl \lvert S_7\bigr \rvert
\,\mathrm d\boldsymbol\mu(\alpha),
\]
which is an expression that will appear when we expand the original sum $S_0$.

We are going to apply the argument that for most $\alpha<2^\lambda$ (with respect to $\boldsymbol \mu$) the $2$-valuation of $\mathfrak p_1,\ldots,\mathfrak p_m$ is small.
For these $\alpha$, the term $r_i\mathfrak p_i\bmod 2^\rho$ attains each $k\in\{0,\ldots,2^\rho-1\}$ not too often, as $r_i$ runs. We may therefore replace $r_i\mathfrak p_1$ by $r_i$ and thus obtain full sums over $r_i$ (we note that we will set $R_i=2^\rho$ for $1\leq i\leq m$).
In order to make this argument work, we are going to utilize the following technical result, the proof of which we give in section~\ref{sec_proofofdivisibility}.
\begin{lemma}\label{lem_exceptions}
Let $\mu,\lambda,\sigma,\gamma,m$ be nonnegative integers such that $m\geq 2$ and

\begin{equation}\label{eqn_gamma_conditions}
\begin{aligned}
\lambda\geq (m+1)\mu,&\quad\gamma\leq \lambda-(m+1)\mu,\\
\mu\geq 4\sigma,&\quad\sigma\geq \gamma\geq 1.
\end{aligned}
\end{equation}
Let $\mathfrak p_1,\ldots,\mathfrak p_m$ be defined by~\eqref{eqn_pi_definition} and set
\[A=\{\alpha\in\{0,\ldots,2^\lambda-1\}:2^{3\gamma}\mid \mathfrak p_i\textrm{ for some }i=1,\ldots,m\}.\]
Then \[\lvert A\rvert=\mathcal O\bigl(2^{\lambda-\gamma}\bigr).\]
Analogously, if
\[A=\{\alpha\in[0,2^\lambda]:2^{3\gamma}\mid \mathfrak p_i\textrm{ for some }i=1,\ldots,m\}.\]
Then \[\boldsymbol\lambda(A)=\mathcal O\bigl(2^{\lambda-\gamma}\bigr),\]
where $\boldsymbol \lambda$ is the Lebesgue measure.
The implied constants are independent of $\mu,\lambda,\sigma$, and $\gamma$.
\end{lemma}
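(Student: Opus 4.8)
Write $A=\bigcup_{i=1}^m A_i$ with $A_i=\{\alpha:2^{3\gamma}\mid\mathfrak p_i\}$. Since $m$ is fixed and implied constants may depend on $m$, it suffices to bound $\lvert A_i\rvert$ (resp.\ $\boldsymbol\lambda(A_i)$) by $O\bigl(2^{\lambda-\gamma}\bigr)$ for each $i$. The key structural point is that $\mathfrak p_i$ is a piecewise constant function of $\alpha$, since a Farey dissection $p_Q(\cdot)/q_Q(\cdot)$ is constant on each Farey interval: for $2\le i<m$ the quantity $\mathfrak p_i$ is constant on the Farey intervals of $\mathcal F_{2^{\mu+2\sigma}}$ for the variable $\alpha/2^{(i+1)\mu}$, for $i=1$ on the Farey intervals of $\mathcal F_{2^{2\mu+2\sigma}}$ for $\alpha/2^{2\mu}$, and for $i=m$ we simply have $\mathfrak p_m=p_{2^{\mu+\sigma}}(\alpha/2^{(m+1)\mu})$. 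Hence each $A_i$ is a union of such \emph{outer} Farey intervals — precisely those for which the resulting $\mathfrak p_i$ is divisible by $2^{3\gamma}$ — and by Lemma~\ref{lem_farey} the one around a fraction $p/q$ has, after rescaling back to $\alpha$, length $\asymp q^{-1}2^{-2\sigma}$ when $i=1$, $\asymp q^{-1}2^{i\mu-2\sigma}$ when $2\le i<m$, and $\asymp q^{-1}2^{m\mu-\sigma}$ when $i=m$.

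The plan is to estimate this union after first fixing the outer denominator $q$. For $i<m$ the condition $2^{3\gamma}\mid\mathfrak p_i$ says that the (integer) outer numerator $p$, divided by $2^{(m-i)\mu}$, falls into one of the Farey intervals of the \emph{inner} dissection $\mathcal F_{2^{\sigma}}$ whose numerator is divisible by $2^{3\gamma}$; a routine count of such inner Farey fractions, weighted by the lengths of their Farey intervals, shows that their total length is $O\bigl(\text{length}/2^{3\gamma}+\sigma 2^{-\sigma}\bigr)$, where the relevant length is $q\,2^{\lambda-(m+1)\mu}$. Here the hypothesis $\mu\ge 4\sigma$ enters decisively: the spacing $2^{-(m-i)\mu}$ of the admissible values of $p/2^{(m-i)\mu}$ lies below the minimal length $\gg 2^{-2\sigma}$ of an inner Farey interval, so the ``number of components'' error is absorbed and the number of admissible $p$ for a given $q$ is $O\bigl(2^{(m-i)\mu}(q\,2^{\lambda-(m+1)\mu-3\gamma}+\sigma 2^{-\sigma})\bigr)$. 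Multiplying by the outer interval length, summing over $q\le 2^{\mu+2\sigma}$ (resp.\ $q\le 2^{2\mu+2\sigma}$ for $i=1$), and then over $i$, produces a main term $O\bigl(2^{\lambda-3\gamma}\bigr)$ together with a handful of lower-order terms, each of which is $O\bigl(2^{\lambda-\gamma}\bigr)$ by the inequalities in~\eqref{eqn_gamma_conditions} (and $\mu\ge 4$, so $\mu^2\le 2^{\mu}$). The case $i=m$ is the same computation with only the single dissection $\mathcal F_{2^{\mu+\sigma}}$ and spacing $2^{-(m+1)\mu}$, which again lies below the mesh of $\mathcal F_{2^{\mu+\sigma}}$ because $(m-1)\mu>2\sigma+1$.

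This already proves the Lebesgue statement. For the counting statement one additionally bounds the number of \emph{integers} in each bad outer interval by its length plus $1$; the ``$+1$'' contributions are harmless for $i\ge 2$, but for $i=1$ the outer intervals have length only $\asymp q^{-1}2^{-2\sigma}$ and there are too many of them. There one uses that for $\alpha\in\mathbb Z$ the bound $\bigl\lvert q_{2^{2\mu+2\sigma}}(\beta)\,\beta-p_{2^{2\mu+2\sigma}}(\beta)\bigr\rvert<2^{-(2\mu+2\sigma)}$ with $\beta=\alpha/2^{2\mu}$ forces, after multiplication by $2^{2\mu}$, the identity $q_{2^{2\mu+2\sigma}}(\beta)\,\alpha=2^{2\mu}\,p_{2^{2\mu+2\sigma}}(\beta)$ — both sides being integers differing by less than $\tfrac14$ — so that the outer numerator equals $\alpha/2^{\min(\nu_2(\alpha),2\mu)}$. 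Thus $\mathfrak p_1$ depends on $\alpha$ only through $\nu_2(\alpha)$ and the odd part of $\alpha$, and splitting into the $O(\mu)$ cases $\nu_2(\alpha)=k$ reduces the count to a single inner dissection $\mathcal F_{2^{\sigma}}$ per case, over which — there being only $O(\mu)$ cases — the number-of-components error is harmless. (In the ranges where this leaves the numerator of a Farey fraction of $\mathcal F_{2^\sigma}$ that is not an integer, $\mathfrak p_1$ is an odd integer, hence not divisible by $2^{3\gamma}$; when $\mathfrak p_1$ is itself an integer one has $\nu_2(\mathfrak p_1)=\nu_2(\alpha)-(m+1)\mu$, which is $\ge 3\gamma$ only on a set of at most $2^{\lambda-(m+1)\mu-3\gamma}+1$ integers.)

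The main difficulty is bookkeeping rather than conceptual: after two rescalings and summation over the inner and outer denominators one accumulates roughly a dozen error terms, and one must check that \emph{each} is $\le C\,2^{\lambda-\gamma}$ using only~\eqref{eqn_gamma_conditions}. The two tight points are (i) comparing the spacing of the relevant arithmetic progression against the mesh of $\mathcal F_{2^{\sigma}}$, which is exactly where $\mu\ge 4\sigma$ is indispensable, and (ii), for the counting measure, the case $i=1$, where forced integrality must be used to keep the index set of size $O(\mu)$ rather than of size $\asymp 2^{2\mu}$.
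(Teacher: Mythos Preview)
Your approach is valid but takes a genuinely different route from the paper's.

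The paper's proof is built on the \emph{arithmetic-progression structure} of $\mathfrak p_i$ in the high part of $\alpha$. It writes $\alpha=\alpha_0+2^{(m-1)\mu}\alpha_1$ (or a three-part decomposition in the continuous case) and observes that
\[
\mathfrak p_i=\bigl(\text{term depending only on }\alpha_0\bigr)+q\cdot\alpha_1,
\]
where $q$ is a Farey \emph{denominator}. Two dedicated lemmas (one for Lebesgue measure, one for well-spaced point sets) show that $2^{\gamma}\mid q$ is rare; on the good set one then exploits that an arithmetic progression with step not divisible by $2^{\gamma}$ hits multiples of $2^{2\gamma}$ at most a $2^{-\gamma}$ proportion of the time. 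So the paper reduces to controlling Farey \emph{denominator} divisibility, and the 3 in $2^{3\gamma}$ arises from chaining three such reductions ($q$, then $q'$, then the AP in $\alpha_2$).

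Your argument instead treats $A_i$ as a union of bad \emph{outer} Farey intervals and sums their lengths, the badness being decided by whether the outer numerator $p$ lands in an inner Farey interval with \emph{numerator} divisible by $2^{3\gamma}$. The resulting measure bound $O(L/2^{3\gamma}+\sigma 2^{-\sigma})$ and the absorption of the component count via $\mu\ge 4\sigma$ are correct, and multiplying by the outer interval length and summing over $q$ gives $O(2^{\lambda-3\gamma})$ as you say. For the discrete case you correctly identify that $i=1$ is special because the outer intervals have length $\asymp q^{-1}2^{-2\sigma}<1$, and your fix via the forced identity $q\alpha=2^{2\mu}p$ (hence $p=\alpha/2^{\min(\nu_2(\alpha),2\mu)}$) followed by a split into the $O(\mu)$ residue classes $\nu_2(\alpha)=k$ is the right idea; each class then reduces to a single inner-dissection count with spacing $\ll 2^{-(m-1)\mu}$, and the $O(\mu)$ factor is harmless.

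One point to clean up: your final parenthetical is garbled. The phrase ``the numerator of a Farey fraction of $\mathcal F_{2^\sigma}$ that is not an integer'' and the dichotomy ``when $\mathfrak p_1$ is itself an integer'' do not parse ($\mathfrak p_1$ is always an integer), and the formula $\nu_2(\mathfrak p_1)=\nu_2(\alpha)-(m+1)\mu$ holds only in the extreme case $\nu_2(\alpha)\ge(m+1)\mu$, where $p/2^{(m-1)\mu}$ is an integer and the inner approximation is exact. This parenthetical is not needed: once you split by $k=\nu_2(\alpha)$, each of the $2\mu+1$ cases (including the aggregate case $k\ge 2\mu$) is handled uniformly by the same inner-dissection count you use for $i\ge 2$, and the ``$+1$'' term summed over $O(\mu)$ cases is easily $O(2^{\lambda-\gamma})$. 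I would simply delete the parenthetical and state this.

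In short: correct strategy, different from the paper's denominator/AP approach, with one confused but inessential aside that should be excised.
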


Let $A$ be defined as in this lemma.
We choose $R_i=2^\rho$ for $1\leq i\leq m$.

Assume that $\alpha\not\in A$.
Then by an elementary argument, $r_i\mathfrak p_i\bmod 2^\rho$ attains each value not more than $2^{3\gamma}$ times, as $r_i$ runs through $\{0,\ldots,2^\rho-1\}$.
The contribution for $\alpha\in A$ will be estimated trivially by the lemma.
We obtain
\begin{equation*}
S_8
\leq
\mathcal O\left(2^{\lambda+(m+1)\rho-\gamma}\right)
+
2^{3\gamma m}
\int_0^{2^\lambda}
\sum_{0\leq r_1,\ldots,r_m<2^\rho}
\left\lvert
S_9
\right\rvert
\,\mathrm d\boldsymbol\mu(\alpha),
\end{equation*}

where
\[
S_9=
\sum_{0\leq n<2^\rho}
\e\Biggl(
  \frac 12
  \sum_{\varepsilon_0,\ldots,\varepsilon_m\in\{0,1\}}
   s_\rho\biggl(
   n+\left\lfloor\frac tT+\frac{\varepsilon_0r_0\alpha}{2^{(m+1)\mu}}\right\rfloor
+\sum_{1\leq i\leq k} \varepsilon_ir_i
\biggr) \Biggr).
\]

The next step is removing the remaining floor function, using the integral over $\alpha$.
In the continuous case, the expression $\left\lfloor t/T+r_0K_0\alpha/2^{(m+1)\mu}\right\rfloor\bmod 2^\rho$  runs through $\{0,\ldots,2^\rho-1\}$ in a completely uniform manner.
That is, for $r_0\neq 0$ we have
\[\boldsymbol\lambda\left(\left\{\alpha\in[0,2^\lambda]:
\left\lfloor t/T+r_0\alpha/2^{(m+1)\mu}\right\rfloor\equiv k\bmod 2^\rho\right\}\right)=2^{\lambda-\rho},\]
where $\boldsymbol\lambda$ is the Lebesgue measure.
We consider the discrete case.
Assume that $r_0\leq 2^{(m+1)\mu}$ (we will choose $R_0$ very small at the end of the proof, so that this will be satisfied).
Then the set of $\alpha\in\{0,\ldots,2^\lambda-1\}$
such that $\lfloor t/T+r_0\alpha/2^{(m+1)\mu}\rfloor\equiv k\bmod 2^\rho$
decomposes into at most $r_0+1$ many intervals (note that $\lambda=(m+1)\mu+\rho$), each having $\leq 2^{(m+1)\mu}/r_0+1$ elements.
In total we have $\ll 2^{\lambda-\rho}$ elements, where the implied constant is absolute.
It follows that
\begin{align*}
S_8\ll
2^{\lambda+(m+1)\rho-\gamma}
+
2^{\lambda-\rho+3\gamma m}
\sum_{0\leq r_0,\ldots,r_m<2^\rho}
\lvert S_{10}(r_0,\ldots,r_m)\rvert,
\end{align*}
where
\[S_{10}(r_0,\ldots,r_m)
=
\sum_{0\leq n<2^\rho}
\e\left(\frac 12\sum_{\varepsilon_0,\ldots,\varepsilon_m\in\{0,1\}}
s_\rho\left(n+\sum_{0\leq i\leq m}\varepsilon_i r_i\right)\right).
\]

As a final step in the procedure of reducing the main theorems to Proposition~\ref{prp_gowers}, we are going to to remove the absolute value around $S_{10}$.
For brevity, we set
\[g(n)=
\sum_{\varepsilon_0,\ldots,\varepsilon_m\in\{0,1\}}
s_\rho\left(n+\sum_{0\leq i\leq m}\varepsilon_i r_i\right)\]
By the $2^\rho$-periodicity of $g$ we have
\begin{align*}
\hspace{3em}&\hspace{-3em}
\sum_{0\leq r_0,\ldots,r_m<2^\rho}\lvert S_{10}(r_0,\ldots,r_m)\rvert^2
=
\sum_{0\leq r_0,\ldots,r_m<2^\rho}
\sum_{0\leq n_1,n_2<2^\rho}
\e\left(\frac 12
g(n_1)+\frac 12g(n_2)\right)
\\&=
\sum_{0\leq r_0,\ldots,r_m<2^\rho}
\sum_{0\leq n_1<2^\rho}
\sum_{0\leq r_{m+1}<2^\rho}
\e\left(\frac 12
g(n_1)+\frac 12g(n_1+r_{m+1})
\right)
\\&=
\sum_{0\leq r_0,\ldots,r_{m+1}<2^\rho}
\sum_{0\leq n_1<2^\rho}
\e\left(\frac 12
g(n_1)+\frac 12g(n_1+r_{m+1})
\right)
\\&=
\sum_{0\leq r_0,\ldots,r_{m+1}<2^\rho}
\sum_{0\leq n_1<2^\rho}
\e\left(\frac 12
\sum_{\varepsilon_0,\ldots,\varepsilon_m\in\{0,1\}}
\sum_{\varepsilon_{m+1}\in\{0,1\}}
s_\rho(n_1+\varepsilon\cdot r+\varepsilon_{m+1}r_{m+1})
\right)
\\&=
\sum_{0\leq r_0,\ldots,r_{m+1}<2^\rho}
S_{10}(r_0,\ldots,r_{m+1}).
\end{align*}
We have therefore removed the absolute value around $S_{10}$ for the price an additional variable $r_{m+1}$.
This means that we have reduced our main theorems to Proposition~\ref{prp_gowers}.

By this Proposition and Cauchy-Schwarz we obtain
\begin{equation}\label{eqn_S8_estimate}
S_8\ll 2^{\lambda+(m+1)\rho}\left(2^{-\gamma}+2^{3\gamma m-\eta\rho}\right)
\end{equation}
for some $\eta>0$.

It remains to collect the error terms and to choose values for the free variables.
Using \eqref{eqn_S6_relevant_estimate} and~\eqref{eqn_S6_trivial_estimate},
we obtain
\begin{multline*}
S_5\ll
\sum_{t\not\in G}\mathcal O\left(\frac NT+ND_N\left(\frac{\alpha}{2^{(m+1)\mu}}\right)\right)
+\sum_{t\in G}\left(\frac N{2^\rho T}S_7+\mathcal O\left(2^\rho N D_N\left(\frac{\alpha}{2^\lambda}\right)\right)\right)
\\=
\frac N{2^\rho T}\sum_{t\in G}S_7
+\mathcal O\left(\frac NT+ND_N\left(\frac{\alpha}{2^{(m+1)\mu}}\right)+2^\rho N TD_N\left(\frac{\alpha}{2^\lambda}\right)\right)
\end{multline*}
and by~\eqref{eqn_S4_S5} and~\eqref{eqn_S0_S4} we obtain

\begin{multline}\label{eqn_S0_errorterms}
\left\lvert\frac{S_0(N,\nu,\xi)}{2^\nu N}\right\rvert^{2^{m+1} }
\ll
\mathcal O\left(\frac 1{R_0}+\frac{R_0\,2^\nu}{2^\lambda}+\frac{R_0}{N}+\frac{R_1K_1}{N}\cdots+\frac{R_mK_m}{N}\right)
\\+
\frac 1{2^\nu N}
\int_0^{2^\lambda}
N\mathcal O\left(\widetilde D_N(\alpha)+D_N\left(\frac{\alpha}{2^{2\mu}}\right)+\cdots+D_N\left(\frac{\alpha}{2^{(m+1)\mu}}\right)+\frac{R_1+\cdots+R_m}{2^\sigma}\right)
\,\mathrm d\boldsymbol\mu(\alpha),
\\+
\frac 1{2^\nu N}
\int_0^{2^\lambda}
\mathcal O\left(\frac NT+ND_N\left(\frac{\alpha}{2^{(m+1)\mu}}\right)+2^\rho NT D_N\left(\frac{\alpha}{2^\lambda}\right)\right)
\,\mathrm d\boldsymbol\mu(\alpha),
\\+\frac 1{R_0\cdots R_m2^\nu N}
\frac N{2^\rho T}
\sum_{t\in G}
\sum_{1\leq r_0<R_0}
\int_0^{2^\lambda}
\sum_{\substack{0\leq r_1,\ldots,r_m<2^\rho}}
\bigl \lvert S_7\bigr \rvert
\,\mathrm d\boldsymbol\mu(\alpha).
\end{multline}

We employ the mean discrepancy estimates from Lemma~\ref{lem_mean_discrepancy}.
Assume that $\delta\leq \lambda$.
In the continuous case we have
\[
\frac 1{2^\nu}\int_0^{2^\lambda}
D_N\left(\frac \alpha{2^\delta}\right)
\,\mathrm d\alpha
\ll
2^{\lambda-\nu-\delta}
\int_0^{2^\delta}
D_N\left(\frac \alpha{2^\delta}\right)
\,\mathrm d\alpha
\ll
2^{\lambda-\nu}\frac{(\logp N)^2}N,
\]
while the discrete case gives
\[
\frac 1{2^\nu}\sum_{0\leq d<2^\lambda}
D_N\left(\frac d{2^\delta}\right)
\ll
2^{\lambda-\delta-\nu}
\frac{N+2^\delta}N(\logp N)^2
=2^{\lambda-\nu}(\logp N)^2
\left(\frac 1N+\frac 1{2^\delta}\right)
\]
In total, noting that $\lambda\geq (m+1)\mu$,
the discrepancy terms can be estimated by
\[\ll
2^{\lambda-\nu}(\logp N)^2 2^\rho T
\left(\frac 1N+\frac 1{2^{2\mu}}\right).
\]
By~\eqref{eqn_S8_estimate}, the last summand in~\eqref{eqn_S0_errorterms} can be estimated by
\[
\ll
2^{\lambda-\nu}\left(2^{-\gamma}+2^{3\gamma m-\eta\rho}\right).
\]
Moreover, using the facts $R_1=\cdots=R_m=2^\rho$
and $K_i\leq 2^{2\mu+3\sigma}$ for $1\leq i\leq m$,
we obtain
\begin{multline}\label{eqn_S0_errorterms2}
\left\lvert\frac{S_0(N,\nu,\xi)}{2^\nu N}\right\rvert^{2^{m+1} }
\ll
\frac 1{R_0}+\frac{R_0\,2^\nu}{2^\lambda}+\frac{R_0}{N}+\frac{2^{\rho+2\mu+3\sigma}}{N}+
\\
2^{\lambda-\nu}(\logp N)^2 2^\rho
T\left(\frac 1N+\frac 1{2^{2\mu}}\right)
+2^{\rho-\sigma+\lambda-\nu}
+\frac 1T
+2^{\lambda-\nu}\left(2^{-\gamma}+2^{3\gamma m-\eta\rho}\right)
\end{multline}
with some implied constant only depending on $m$.
Collecting also the requirements on the variables we assumed in the course of our calculation, we see that this estimate is valid as long as
\begin{equation}\label{eqn_requirements}
\begin{aligned}
&R_0,T\geq 1, m\geq 2, \gamma, \nu,\lambda,\rho,\mu\geq 0,&&
R_1=\cdots=R_m=2^\rho,\\
&\lambda>\nu,&&
\rho=\lambda-(m+1)\mu,\\
&\gamma\leq \rho<\sigma-1,
&&\mu\geq 4\sigma,\\
&R_0\leq 2^{(m+1)\mu}.
\end{aligned}
\end{equation}

It remains to choose the variables within these constraints.
Choose the integer $j\geq 1$ in such a way that $N^{j-1}\leq 2^\nu<N^j$ and set
$m=3j-1$. Clearly, $m\geq 2$.
We define
\[\mu=\left\lfloor\frac{\nu}{m+1+1/8}\right\rfloor,\quad
\sigma=\lfloor \mu/4\rfloor,\quad \widetilde\rho=\nu-(m+1)\mu.\]
We obtain the inequalities
$N\geq 2^{3\mu}$, $\mu\geq 4\sigma$, $\widetilde\rho\geq 0$.
Moreover, for large $\nu$ we obtain $\widetilde\rho\sim \mu/8$.

Choose
$\gamma=\lfloor \widetilde\rho\eta/(6m)\rfloor$ and $R_0=\lfloor 2^{\gamma/4}\rfloor$.
Then the last summand in~\eqref{eqn_S0_errorterms2} is $\ll 2^{\lambda-\nu}\bigl(2^{-\gamma}+2^{-\widetilde\rho\eta/2}\bigl)\ll 2^{\lambda-\nu-\gamma}$.
Finally, set $\lambda=\nu+\lfloor\gamma/2\rfloor$, $T=2^\gamma$ and $\rho=\lambda-(m+1)\mu$.
It follows that $\rho=\widetilde \rho+\lfloor \gamma/2\rfloor \sim \frac \mu 8(1+\eta/(12m))\leq \mu/8+\mu/192$.
Using these definitions, it is not hard to see that, for large $N$ and $\nu$, the requirements~\eqref{eqn_requirements} are met.

Moreover, using the statements $N^{\rho_1}\leq D\leq N^{\rho_2}$ and $D<2^\nu\leq 2D$
we can easily estimate~\eqref{eqn_S0_errorterms2} term by term and conclude that $S_0(N,\nu,\xi)/(2^\nu N)\leq C N^{-\eta'}$ for some $\eta'>0$ and some constant $C$. This finishes the proof of Propositions~\ref{prp_1} and~\ref{prp_2} and therefore of our main theorems.
It remains to prove our auxiliary results.
\subsection{Proof of Proposition~\ref{prp_gowers}}
We utilize ideas from the paper~\cite{K2017} by Konieczny.
Set
\[A_\rho(\mathbf a)=
\frac 1{2^{(m+1)\rho}}
\sum_{\substack{0\leq n<2^\rho\\0\leq r_1,\ldots,r_m<2^\rho}}
\e\left(\frac 12\sum_{\varepsilon\in\{0,1\}^m}s_\rho(n+\varepsilon\cdot r+\mathbf a_\varepsilon)\right).
\]
Then in analogy to equation~(16) of~\cite{K2017}, we get after a similar calculation (using $m\geq 2$)
\begin{equation}\label{eqn_konieczny_rec}
A_{\rho+1}(\mathbf a)=\frac{(-1)^{\lvert \mathbf a\rvert}}{2^{m+1}}
\sum_{e_0,\ldots,e_m\in\{0,1\}} A_\rho(\delta(\mathbf a,e)),
\end{equation}
where $\lvert \mathbf a\rvert = \sum_{\varepsilon\in\{0,1\}^m} \mathbf a_\varepsilon$
and
\[\delta(\mathbf a,e)_\varepsilon=
\left\lfloor \frac{\mathbf a_\varepsilon+e_0+\sum_{1\leq i\leq m}\varepsilon_ie_i}2\right\rfloor.\]

We define a directed graph with weighted edges according to~\eqref{eqn_konieczny_rec}.
The set of vertices is given by the set of families $\mathbf a\in\mathbb Z^{\{0,1\}^m}$.
There is an edge from $\mathbf a$ to $\mathbf b$ if and only if there is an $e=(e_0,\ldots,e_m)\in\{0,1\}^{m+1}$ such that $\delta(\mathbf a,e)=\mathbf b$ and this edge has the weight
\[
w(\mathbf a,\mathbf b)
=\frac{(-1)^{\lvert \mathbf a\rvert}}{2^{m+1}}
\left\lvert\left\{
e\in\{0,1\}^{m+1}:\delta(\mathbf a,e)=\mathbf b
\right\}\right\rvert.
\]
Note that
\begin{equation}\label{eqn_total_weight}
\sum_{\mathbf b\in\mathbb Z^{\{0,1\}^m}}\lvert w(\mathbf a,\mathbf b)\rvert=1,
\end{equation}
which we will need later.
We are interested in the subgraph $(V,E,w)$ induced by the set of vertices reachable from $\mathbf 0$.
This graph is finite: we have
\[\max_{\varepsilon\in\{0,1\}^m}\lvert \delta(\mathbf a,e)_\varepsilon\rvert
\leq
\frac 12\left(\max_{\varepsilon\in\{0,1\}^m}\lvert \mathbf a_\varepsilon\rvert
+m+1\right)\]
and by induction, it follows that
$\max_{\varepsilon\in\{0,1\}^m}\lvert \mathbf a_\varepsilon\rvert<m+1$ for all $\mathbf a\in V$, which implies the finiteness of $V$.

Moreover, this subgraph is strongly connected. We prove this by showing that $\mathbf 0$ is reachable from each $\mathbf a\in V$.
This follows immediately by considering the path $(\mathbf a=\mathbf a^{(0)},\mathbf a^{(1)}),\ldots,(\mathbf a^{(k)},\mathbf a^{(k+1)})$ defined by $\mathbf a^{(j+1)}=\delta(\mathbf a^{(k)},(0,\ldots,0))$. It is clear from the definition of $\delta$ that such a path reaches $\mathbf 0$ if $k$ is large enough.

We wish to apply~\eqref{eqn_konieczny_rec} recursively.
We therefore define, for two vertices $\mathbf a, \mathbf b\in V$ and a positive integer $k$, the weight $w_k(\mathbf a,\mathbf b)$ as the sum of all weights of paths of length $k$ from $\mathbf a$ to $\mathbf b$.
(Here the weight of a path is the product of the weights of the edges.)

In order to prove Proposition~\ref{prp_gowers},
it is sufficient to prove that there is a $k$ such that
\[\sum_{\mathbf b\in V}\lvert w_k(\mathbf a,\mathbf b)\rvert <1\]
for all $\mathbf a\in V$.
In order to prove this, it is sufficient, by the strong connectedness of the graph and~\eqref{eqn_total_weight}, to prove that there are two paths of the same length from $\mathbf 0$ to $\mathbf 0$ such that their respective weights have different sign.
One of this paths is the trivial one, choosing $e_0=\cdots=e_m=0$ in each step. This path has positive weight.

For the second path, we follow Konieczny~\cite[proof of Proposition 2.3]{K2017}.
As in that paper, we define $\mathbf a^{(0)}=\mathbf a^{(m+1)}=\mathbf 0$ and for $1\leq j\leq m$,
\[\mathbf a^{(j)}_\varepsilon=\begin{cases}1,&\textrm{if }\varepsilon_1=\cdots=\varepsilon_j=1;\\0,&\textrm{otherwise.}\end{cases}\]
Assuming for a moment that there is an edge from $\mathbf a^{(j)}$ to $\mathbf a^{(j+1)}$ for all $j\in\{0,\ldots,m\}$,
it is easy to see that each edge $(\mathbf a^{(j)},\mathbf a^{(j+1)})$ has positive weight for $0\leq j<m$, while $(\mathbf a^{(m)},\mathbf a^{(m+1)})$ has negative weight. Proving that these vertices indeed define a path is contained completely in the argument given by Konieczny.
This finishes the proof of Lemma~\ref{prp_gowers}.

\subsection{Proof of Lemma~\ref{lem_exceptions}}
\label{sec_proofofdivisibility}
We choose an integer $\gamma>0$ and bound the size of the set of $\alpha<2^\lambda$ such that $2^{3\gamma}\mid \mathfrak p_i$ for some $i\in\{1,\ldots,m\}$.
We will need the following two lemmas.
\begin{lemma}\label{lem_farey_divisibility1}
Let $\boldsymbol\lambda$ be the Lebesgue measure.
Assume that $K\geq 1$ and $\gamma\geq 0$ are integers.
Then
\[\boldsymbol\lambda
\bigl(\{x\in[0,1]:2^\gamma\mid q_K(x)\}\bigr)
\ll \frac 1{2^\gamma}+\frac 1K.\]
The constant in this estimate is absolute.
\end{lemma}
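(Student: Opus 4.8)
The plan is to argue directly from the Farey dissection introduced before Lemma~\ref{lem_farey}. The half-open interval $[0,1)$ is the disjoint union of the Farey intervals around the fractions $p/q\in\mathcal F_K$, and the set $\{x\in[0,1]:2^\gamma\mid q_K(x)\}$ is precisely the union of those Farey intervals whose base point $p/q$ has $2^\gamma\mid q$. So I would bound its measure by a sum over the $q$ with $1\le q\le K$ and $2^\gamma\mid q$, each term being $\varphi(q)$ (the number of reduced fractions with denominator $q$) times the maximal length of a Farey interval around such a fraction.

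First I would bound the length of a single Farey interval. If $a/b<p/q<c/d$ are three consecutive elements of $\mathcal F_K$, then by the definition of the Farey interval around $p/q$ this interval equals $[(a+p)/(b+q),(p+c)/(q+d))$, so its length is the sum of the two mediant gaps $p/q-(a+p)/(b+q)$ and $(p+c)/(q+d)-p/q$; by Lemma~\ref{lem_farey} each of these is less than $1/(qK)$, which gives $\boldsymbol\lambda(I)<2/(qK)$ for every Farey interval around a fraction with denominator $q\le K$.

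Then I would finish with an elementary count. For fixed $q\le K$ there are $\varphi(q)\le q$ reduced fractions $p/q$, so $\boldsymbol\lambda(\{x\in[0,1]:q_K(x)=q\})<\varphi(q)\cdot 2/(qK)\le 2/K$; summing over the at most $K/2^\gamma$ multiples of $2^\gamma$ in $\{1,\dots,K\}$ (of which there are none, so the set is empty, when $2^\gamma>K$) yields the bound $(K/2^\gamma)(2/K)=2^{1-\gamma}$, which is $\ll 2^{-\gamma}+1/K$ as claimed.

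I do not expect a genuine obstacle here: the estimate is essentially a one-line consequence of Lemma~\ref{lem_farey} together with counting numerators and counting multiples of $2^\gamma$. The only point needing a word of care is the two boundary fractions $0$ and $1$, which possess only one Farey neighbour; but they carry denominator $1$, hence play no r\^ole once $\gamma\ge1$, and the case $\gamma=0$ is trivial since then the set is all of $[0,1]$.
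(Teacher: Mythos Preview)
Your proposal is correct and follows essentially the same route as the paper: bound each Farey interval around $p/q$ by $2/(qK)$ via Lemma~\ref{lem_farey}, sum over numerators to get $\le 2/K$ per admissible $q$, and then sum over the $\le K/2^\gamma$ multiples of $2^\gamma$ up to $K$. The paper's write-up is terser (it omits your discussion of the mediant endpoints and the boundary fractions), but the argument is the same.
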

\begin{proof}
We have to sum up the lengths of the Farey intervals around $p/q$ such that $2^\gamma\mid q$.
By Lemma~\ref{lem_farey}, each such fraction contributes at most $2/(Kq)$.
By summing over $p\in\{1,\ldots,q\}$, this gives a contribution $2/K$ for each multiple $q$ of $2^\gamma$,
and we obtain a total contribution
\[
\ll \sum_{\substack{1\leq q\leq K\\2^\gamma\mid q}}
\frac 1K
\leq \frac 1{2^\gamma}+\frac 1K.
\]
\end{proof}
\begin{lemma}\label{lem_farey_divisibility2}
  Let $x_0,\ldots,x_{M-1}\in[0,1]$ and $\delta>0$.
Assume that $\lVert x_i-x_j\rVert\geq \delta$ for $i\neq j$.
Then
\[
\bigl \lvert \{n\in \{0,\ldots,M-1\}:2^\gamma\mid q_K(x_i)\} \bigr \rvert
\ll
\frac{K^2}{2^\gamma}+\frac 1\delta\left(\frac 1{2^\gamma}+\frac 1K\right).
\]
The implied constant is absolute.
\end{lemma}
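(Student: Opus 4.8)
The plan is to decompose $[0,1)$ into Farey intervals and count, for each Farey interval of the relevant type, how many of the well-spaced points $x_i$ it can contain. Recall the Farey dissection from Section~\ref{sec_lemmas}: as $p/q$ ranges over the reduced fractions in $\mathcal F_K$ lying in $[0,1)$, the Farey intervals $I_{p/q}=\{\alpha:p_K(\alpha)=p,\ q_K(\alpha)=q\}$ form a partition of $[0,1)$ into half-open intervals, and by Lemma~\ref{lem_farey} each $I_{p/q}$ has length $<2/(qK)$. Hence the set $S=\{x\in[0,1):2^\gamma\mid q_K(x)\}$ is precisely the union of those $I_{p/q}$ with $2^\gamma\mid q$, and
\[
\bigl\lvert\{n\in\{0,\ldots,M-1\}:2^\gamma\mid q_K(x_i)\}\bigr\rvert
=\sum_{\substack{p/q\in\mathcal F_K\cap[0,1)\\2^\gamma\mid q}}
\bigl\lvert\{i:x_i\in I_{p/q}\}\bigr\rvert .
\]

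First I would record the elementary fact that an interval $I$ of length $\ell$ contains at most $\ell/\delta+1$ of the points $x_i$: since $\lVert x_i-x_j\rVert\geq\delta$ for $i\neq j$, at most one $x_i$ lies in any half-open subinterval of length $\leq\delta$ (we may assume $\delta\leq 1/2$; otherwise $M\leq 1$ and the statement is trivial), and $I$ is covered by $\lceil\ell/\delta\rceil\leq\ell/\delta+1$ such subintervals. Applying this to each $I_{p/q}$ with $2^\gamma\mid q$ and summing, the count splits into two pieces: the sum of the $\ell/\delta$ terms equals $\delta^{-1}\boldsymbol\lambda(S)$ (the intervals being disjoint with union $S$), and the sum of the $1$'s equals the number of admissible fractions $p/q\in\mathcal F_K\cap[0,1)$ with $2^\gamma\mid q$.

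For the first piece, Lemma~\ref{lem_farey_divisibility1} gives $\boldsymbol\lambda(S)\ll 2^{-\gamma}+K^{-1}$, contributing $\delta^{-1}\bigl(2^{-\gamma}+K^{-1}\bigr)$. For the second piece, each admissible denominator $q$ is a multiple of $2^\gamma$ not exceeding $K$, and there are at most $q$ numerators for each, so the number of admissible fractions is $\leq\sum_{q\leq K,\ 2^\gamma\mid q}q=2^\gamma\sum_{1\leq j\leq K/2^\gamma}j\ll K^2/2^\gamma$ (the sum is empty when $2^\gamma>K$, and $2^\gamma\leq K^2/2^\gamma$ when $2^\gamma\leq K$, so any lower-order remainder is absorbed). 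Adding the two pieces yields the asserted bound $\ll K^2/2^\gamma+\delta^{-1}\bigl(2^{-\gamma}+K^{-1}\bigr)$.

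The argument is routine, with no genuine obstacle; the only points that need attention are the spacing estimate (which must be phrased with half-open intervals so the separation hypothesis is applied with the correct inequality) and, more importantly, the use of Lemma~\ref{lem_farey_divisibility1} for the total length of $S$ instead of crudely bounding $\sum_{q\leq K,\,2^\gamma\mid q}\ell(I_{p/q})$ by $\sum 2/(qK)$, which would cost a spurious factor $\log K$. This is exactly where the sharpened measure bound of the previous lemma is needed.
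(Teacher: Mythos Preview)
Your proof is correct and follows the same line as the paper's: count points in each Farey interval $I_{p/q}$ with $2^\gamma\mid q$ by the bound $\ell(I_{p/q})/\delta+1$, then sum the two contributions separately.

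One remark on your closing paragraph: the paper in fact \emph{does} use the crude bound $\ell(I_{p/q})\leq 2/(qK)$ directly, and no spurious $\log K$ appears. Summing $2/(qK\delta)$ over $1\leq p\leq q$ gives $2/(K\delta)$, and then summing over the at most $K/2^\gamma+1$ admissible $q$ yields $\ll \delta^{-1}(2^{-\gamma}+K^{-1})$---exactly the bound you obtained via Lemma~\ref{lem_farey_divisibility1}. The factor $1/q$ from the interval length is cancelled by the $q$ numerators, so the harmonic sum never materialises. Your appeal to the previous lemma is perfectly valid, but it is not needed to avoid a logarithm here.
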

\begin{proof}
In each Farey interval around $p/q$ such that $q$ is divisible by $2^\gamma$ there are at most $2/(Kq\delta)+1$ many points $x_i$.
By summing over $p$ and $q$, we can bound the number of points in such intervals by
\begin{multline*}
\ll \sum_{\substack{1\leq q\leq K\\2^\gamma\mid q}}
\sum_{1\leq p\leq q}
\left(\frac 1{qK\delta}+1\right)
=
\sum_{\substack{1\leq q\leq K\\2^\gamma\mid q}}
\left(\frac 1{K\delta}+q\right)
=
\left(K2^{-\gamma}+1\right)\frac 1{K\delta}+
\sum_{\substack{1\leq q\leq K\\2^\gamma\mid q}}q
\\\leq
\frac 1{2^\gamma \delta}+\frac 1{K\delta}
+
2^\gamma \sum_{1\leq q'\leq\lfloor K2^{-\gamma} \rfloor }q'
\ll
\frac{K^2}{2^\gamma}+\frac 1{2^\gamma\delta}+\frac 1{K\delta}.
\qedhere
\end{multline*}
\end{proof}

We proceed to the proof of Lemma~\ref{lem_exceptions}.
Consider $\mathfrak p_1$ and the case ``$\alpha$ discrete''.
In this case, we have $p_{2^{2\mu+2\sigma}}(\alpha/2^{2\mu})=\alpha$.
Assume therefore that $\alpha=\alpha_0+2^{(m-1)\mu}\alpha_1$, where $\alpha_0\in\{0,\ldots,2^{(m-1)\mu}-1\}$ and $\alpha_1\in \{0,\ldots,2^{\lambda-(m-1)\mu}-1\}$.

Then
\[\mathfrak p_1=p_{2^\sigma}\bigl(\alpha/2^{(m-1)\mu}\bigr)
=p_{2^\sigma}\bigl(\alpha_0/2^{(m-1)\mu}\bigr)+
q_{2^\sigma}\bigl(\alpha_0/2^{(m-1)\mu}\bigr)\alpha_1.
\]
By Lemma~\ref{lem_farey_divisibility2}, using also~\eqref{eqn_gamma_conditions}, it follows that the number of $\alpha_0\in\{0,\ldots,2^{(m-1)\mu}-1\}$ such that $2^\gamma\nmid q_{2^\sigma}\bigl(\alpha_0/2^{(m-1)\mu}\bigr)$ is
$2^{(m-1)\mu}\left(1-\mathcal O(2^{-\gamma})\right)$.
For each such $\alpha_0$, we let $\alpha_1$ run through $\{0,\ldots,2^{\lambda-(m-1)\mu}-1\}$.
Then two occurrences $\alpha_1$, $\alpha_1'$ such that $2^{2\gamma}\mid\mathfrak p_1$ are separated by at least $2^\gamma$ steps;
it follows that the number of such $\alpha_1$ is bounded by $2^{\lambda-(m-1)\mu-\gamma}$.
Putting these errors together, we see that the number of $\alpha\in\{0,\ldots,2^\lambda-1\}$ such that $2^{2\gamma}\nmid \mathfrak p_1$ is
given by $2^{(m-1)\mu}\left(1-\mathcal O(2^{-\gamma})\right)2^{\lambda-(m-1)\mu}\left(1-\mathcal O(2^{-\gamma})\right)= 2^\lambda\left(1-\mathcal O(2^{-\gamma})\right)$.

Next, we consider the continuous case.
We write $\alpha=\alpha_0+2^{2\mu}\alpha_1+2^{(m+1)\mu}\alpha_2$ ,
where                      
$\alpha_0\in [0,2^{2\mu})$ 
is real and $\alpha_1<2^{(m-1)\mu}$ and $\alpha_2<2^{\lambda-(m+1)\mu}$ are nonnegative integers.
Set $p=p_{2^{2\mu+2\sigma}}\bigl(\alpha_0/2^{2\mu}\bigr)$ and
$q=q_{2^{2\mu+2\sigma}}\bigl(\alpha_0/2^{2\mu}\bigr)$.
Then
\begin{align*}
\frac{p_{2^{2\mu+2\sigma}}\bigl(\alpha/2^{2\mu}\bigr)}{2^{(m-1)\mu}}
&=
\frac{p+\bigl(\alpha_1+2^{(m-1)\mu}\alpha_2\bigr)q}{2^{(m-1)\mu}}
=
\frac{p+\alpha_1q}{2^{(m-1)\mu}}+\alpha_2q.
\end{align*}

By the approximation property~\eqref{eqn_dirichlet}
(note that $\sigma\geq 1$) we have
\begin{align*}
\mathfrak p_1&=
\left\langle
\left(\frac{p+\alpha_1q}{2^{(m-1)\mu}}+\alpha_2q\right)
q_{2^\sigma}
\left(\frac{p+\alpha_1q}{2^{(m-1)\mu}}\right)
\right\rangle
\\&=
\left\langle
\frac{p+\alpha_1q}{2^{(m-1)\mu}}
q_{2^\sigma}
\left(\frac{p+\alpha_1q}{2^{(m-1)\mu}}\right)
\right\rangle
+\alpha_2q\,
q_{2^\sigma}\left(\frac{p+\alpha_1q}{2^{(m-1)\mu}}\right)
\end{align*}
and we note that the first summand does not depend on $\alpha_2$.

As $\alpha_0$ runs through $[0,2^{2\mu}]$, we have by Lemma~\ref{lem_farey_divisibility1} $2^\gamma\nmid q$ in a set of measure $2^{2\mu}(1-\mathcal O(2^{-\gamma}+2^{-2\mu-2\sigma}))$.
By~\eqref{eqn_gamma_conditions}, this is
$2^{2\mu}\bigl(1-\mathcal O\bigl(2^{-\gamma}\bigr)\bigr)$.
Assume that $\alpha_0$ is such that $2^\gamma\nmid q$
and set $\gamma'=\nu_2(q)<\gamma$.
Next, we let $\alpha_1$ run.
We choose $x_j=\bigl\{(p+jq)/2^{(m-1)\mu}\bigr\}$
for $0\leq j<2^{(m-1)\mu-\gamma'}$ and we note that these points satisfy
$\lVert x_i-x_j\rVert\geq 1/2^{(m-1)\mu-\gamma'}$ for $i\neq j$.
By Lemma~\ref{lem_farey_divisibility2} it follows that
\[
\left\{
\alpha_1\in\{0,\ldots,2^{(m-1)\mu-\gamma'}-1\}:
2^\gamma\mid
q_{2^\sigma}\left(\frac{p+\alpha_1 q}{2^{(m-1)\mu}}\right)
\right\}
\ll
\frac{2^{2\sigma}}{2^{\gamma}}+2^{(m-1)\mu-\gamma'}\left(\frac 1{2^\gamma}+\frac 1{2^{\sigma}}\right).
\]
By~\eqref{eqn_gamma_conditions}, this is $\ll 2^{(m-1)\mu-\gamma'-\gamma}$.
Performing this also for the other intervals of length $2^{(m-1)\mu-\gamma'}$, we obtain
\[
\left\{
\alpha_1\in\{0,\ldots,2^{(m-1)\mu}-1\}:
2^\gamma\mid
q_{2^\sigma}\left(\frac{p+\alpha_1 q}{2^{(m-1)\mu}}\right)
\right\}
\ll
2^{(m-1)\mu-\gamma}.
\]

Finally, $\alpha_2$ runs through $\{0,\ldots,2^{\lambda-(m+1)\mu}-1\}$ and we consider $\mathfrak p_1$.
For given good $\alpha_1$ and $\alpha_0$ (such that $2^\gamma\nmid q$ and $2^\gamma\nmid q_{2^\sigma}((p+\alpha_1 q)/2^{(m-1)\mu})$),
$\mathfrak p_1$ is an arithmetic progression in $\alpha_2$ whose common difference is not divisible by $2^{2\gamma}$.
Similarly to the discrete case, it follows that $\mathfrak p_1$ is divisible by $2^{3\gamma}$ for at most
$2^{\lambda-(m+1)\mu-\gamma}$ many $\alpha_2$.
It follows that there is a set of measure
\[
2^{2\mu}\bigl(1-\mathcal O(2^{-\gamma})\bigr)
2^{(m-1)\mu}\bigl(1-\mathcal O(2^{-\gamma})\bigr)
2^{\lambda-(m+1)\mu}\bigl(1-\mathcal O(2^{-\gamma})\bigr)
=2^\lambda\bigl(1-\mathcal O(2^{-\gamma})\bigr)
\]
of $\alpha<2^\lambda$ such that $2^{3\gamma}\nmid \mathfrak p_1$.

The cases $2\leq i\leq m$ do not require any new ideas;
we only give a sketch of a proof.
Let $2\leq i<m$. We treat the discrete and continuous cases in parallel.
We write $\alpha=\alpha_0+2^{(i+1)\mu}\alpha_1+2^{(m+1)\mu}\alpha_2$,
where
$\alpha_0<2^{(i+1)\mu}$,
and
$\alpha_1<2^{(m-i)\mu}$ and $\alpha_2<2^{\lambda-(m+1)\mu}$ are nonnegative integers.
Set $p=p_{2^{\mu+2\sigma}}\bigl(\alpha_0/2^{(i+1)\mu}\bigr)$ and
$q=q_{2^{\mu+2\sigma}}\bigl(\alpha_0/2^{(i+1)\mu}\bigr)$.
Then
\begin{align*}
\mathfrak p_i&=
\left\langle
\frac{p+\alpha_1q}{2^{(m-i)\mu}}
q_{2^\sigma}
\left(\frac{p+\alpha_1q}{2^{(m-i)\mu}}\right)
\right\rangle
+\alpha_2q\,
q_{2^\sigma}\left(\frac{p+\alpha_1q}{2^{(m-i)\mu}}\right),
\end{align*}
as before.
By Lemmas~\ref{lem_farey_divisibility1} and~\ref{lem_farey_divisibility2} we have $2^\gamma\nmid q$
for $\alpha_0$ in a set of measure $2^{(i+1)\mu}(1-\mathcal O(2^{-\gamma}))$,
where we used $2\mu+4\sigma\leq (i+1)\mu$ in the discrete case.
(We note that this last inequality is the reason for defining $\mathfrak p_1$ separately, using $2^{2\mu}$ instead of $2^\mu$.)
The remaining steps are as before, and this case is finished.

Finally, in the case $i=m$ we write
$\alpha=\alpha_0+2^{(m+1)\mu}\alpha_1$, where
$\alpha_0<(m+1)\mu$ and
$\alpha_1\in\{0,\ldots,2^{\lambda-(m+1)\mu}-1\}$.
Then
\[\mathfrak p_m=p_{2^{\mu+\sigma}}\bigl(\alpha_0/2^{(m+1)\mu}\bigr)+q_{2^{\mu+\sigma}}\bigl(\alpha_0/2^{(m+1)\mu}\bigr)\alpha_1.\]

By Lemmas~\ref{lem_farey_divisibility1} and~\ref{lem_farey_divisibility2} and~\eqref{eqn_gamma_conditions} we have $2^\gamma\mid q_{2^{\mu+\sigma}}\bigl(\alpha_0/2^{(m+1)\mu}\bigr)$ for $\alpha_0$ in a set of measure $\mathcal O(2^{(m+1)\mu-\gamma})$ and the statement follows as before.

In total, we have a set of measure $2^\lambda\bigl(1-\mathcal O(2^{-\gamma})\bigr)$ of $\alpha<2^\lambda$ such that $2^{3\gamma}\nmid \mathfrak p_i$ for all $i$.
\section*{Acknowledgements}
The author wishes to thank Thomas Stoll for helpful discussions during his stay in Nancy, where the work on this project began.
Moreover, the author wishes to thank Michael Drmota and Clemens M\"ullner for several fruitful discussions on the topic.
Finally, the author is indebted to Etienne Fouvry for valuable advice.
\bibliographystyle{siam}
\bibliography{lod}

\def\cprime{$'$}
\begin{thebibliography}{10}

\bibitem{AS1999}
{\sc J.-P. Allouche and J.~Shallit}, {\em The ubiquitous
  {P}rouhet-{T}hue-{M}orse sequence}, in Sequences and their applications
  ({S}ingapore, 1998), Springer Ser. Discrete Math. Theor. Comput. Sci.,
  Springer, London, 1999, pp.~1--16.

\bibitem{AS2003}
\leavevmode\vrule height 2pt depth -1.6pt width 23pt, {\em Automatic
  sequences}, Cambridge University Press, Cambridge, 2003.
\newblock Theory, applications, generalizations.

\bibitem{BFI1986}
{\sc E.~Bombieri, J.~B. Friedlander, and H.~Iwaniec}, {\em Primes in arithmetic
  progressions to large moduli}, Acta Math., 156 (1986), pp.~203--251.

\bibitem{BFI1987}
\leavevmode\vrule height 2pt depth -1.6pt width 23pt, {\em Primes in arithmetic
  progressions to large moduli. {II}}, Math. Ann., 277 (1987), pp.~361--393.

\bibitem{BFI1989}
\leavevmode\vrule height 2pt depth -1.6pt width 23pt, {\em Primes in arithmetic
  progressions to large moduli. {III}}, J. Amer. Math. Soc., 2 (1989),
  pp.~215--224.

\bibitem{DT2006}
{\sc C.~Dartyge and G.~Tenenbaum}, {\em Congruences de sommes de chiffres de
  valeurs polynomiales}, Bull. London Math. Soc., 38 (2006), pp.~61--69.

\bibitem{DDM2012}
{\sc J.-M. Deshouillers, M.~Drmota, and J.~F. Morgenbesser}, {\em Subsequences
  of automatic sequences indexed by {$\lfloor n^c\rfloor$} and correlations},
  J. Number Theory, 132 (2012), pp.~1837--1866.

\bibitem{DMR2018}
{\sc M.~Drmota, C.~Mauduit, and J.~Rivat}, {\em The {T}hue-{M}orse sequence
  along squares is normal}.
\newblock To appear in J. Eur. Math. Soc.

\bibitem{DR2005}
{\sc M.~Drmota and J.~Rivat}, {\em The sum-of-digits function of squares}, J.
  London Math. Soc. (2), 72 (2005), pp.~273--292.

\bibitem{EH1970}
{\sc P.~D. T.~A. Elliott and H.~Halberstam}, {\em A conjecture in prime number
  theory}, in Symposia {M}athematica, {V}ol. {IV} ({INDAM}, {R}ome, 1968/69),
  Academic Press, London, 1970, pp.~59--72.

\bibitem{F1982}
{\sc E.~Fouvry}, {\em R\'epartition des suites dans les progressions
  arithm\'etiques}, Acta Arith., 41 (1982), pp.~359--382.

\bibitem{F1984}
\leavevmode\vrule height 2pt depth -1.6pt width 23pt, {\em Autour du
  th\'eor\`eme de {B}ombieri-{V}inogradov}, Acta Math., 152 (1984),
  pp.~219--244.

\bibitem{FI1980}
{\sc E.~Fouvry and H.~Iwaniec}, {\em On a theorem of {B}ombieri-{V}inogradov
  type}, Mathematika, 27 (1980), pp.~135--152 (1981).

\bibitem{FM1996b}
{\sc E.~Fouvry and C.~Mauduit}, {\em M\'ethodes de crible et fonctions sommes
  des chiffres}, Acta Arith., 77 (1996), pp.~339--351.

\bibitem{FM1996}
\leavevmode\vrule height 2pt depth -1.6pt width 23pt, {\em Sommes des chiffres
  et nombres presque premiers}, Math. Ann., 305 (1996), pp.~571--599.

\bibitem{FI2010}
{\sc J.~{Friedlander} and H.~{Iwaniec}}, {\em {Opera de cribro.}}, Providence,
  RI: American Mathematical Society (AMS), 2010.

\bibitem{FI1985}
{\sc J.~B. Friedlander and H.~Iwaniec}, {\em Incomplete {K}loosterman sums and
  a divisor problem}, Ann. of Math. (2), 121 (1985), pp.~319--350.
\newblock With an appendix by Bryan J. Birch and Enrico Bombieri.

\bibitem{G1968}
{\sc A.~O. Gel{\cprime}fond}, {\em Sur les nombres qui ont des propri\'et\'es
  additives et multiplicatives donn\'ees}, Acta Arith., 13 (1967/1968),
  pp.~259--265.

\bibitem{GPY2009}
{\sc D.~A. Goldston, J.~Pintz, and C.~Y. Y\i{}ld\i{}r\i{}m}, {\em Primes in
  tuples. {I}}, Ann. of Math. (2), 170 (2009), pp.~819--862.

\bibitem{HW1954}
{\sc G.~H. Hardy and E.~M. Wright}, {\em An introduction to the theory of
  numbers}, Oxford, at the Clarendon Press, 1954.
\newblock 3rd ed.

\bibitem{K2017}
{\sc J.~Konieczny}, {\em Gowers norms for the {T}hue-{M}orse and
  {R}udin-{S}hapiro sequences}, 2017.
\newblock Preprint, http://arxiv.org/abs/1611.09985.

\bibitem{K2014}
{\sc A.~Kontorovich}, {\em Levels of distribution and the affine sieve}, Ann.
  Fac. Sci. Toulouse Math. (6), 23 (2014), pp.~933--966.

\bibitem{MMR2014}
{\sc B.~Martin, C.~Mauduit, and J.~Rivat}, {\em Th\'eor\'eme des nombres
  premiers pour les fonctions digitales}, Acta Arith., 165 (2014), pp.~11--45.

\bibitem{M2001}
{\sc C.~Mauduit}, {\em Multiplicative properties of the {T}hue-{M}orse
  sequence}, Period. Math. Hungar., 43 (2001), pp.~137--153.

\bibitem{MR1995}
{\sc C.~Mauduit and J.~Rivat}, {\em R\'epartition des fonctions
  {$q$}-multiplicatives dans la suite {$([n^c])_{n\in\bold N},\ c>1$}}, Acta
  Arith., 71 (1995), pp.~171--179.

\bibitem{MR2005}
\leavevmode\vrule height 2pt depth -1.6pt width 23pt, {\em Propri\'et\'es
  {$q$}-multiplicatives de la suite {$\lfloor n^c\rfloor$}, {$c>1$}}, Acta
  Arith., 118 (2005), pp.~187--203.

\bibitem{MR2009}
\leavevmode\vrule height 2pt depth -1.6pt width 23pt, {\em La somme des
  chiffres des carr\'es}, Acta Math., 203 (2009), pp.~107--148.

\bibitem{MR2010}
\leavevmode\vrule height 2pt depth -1.6pt width 23pt, {\em Sur un probl\`eme de
  {G}elfond: la somme des chiffres des nombres premiers}, Ann. of Math. (2),
  171 (2010), pp.~1591--1646.

\bibitem{M2015}
{\sc J.~Maynard}, {\em Small gaps between primes}, Ann. of Math. (2), 181
  (2015), pp.~383--413.

\bibitem{M2007}
{\sc Y.~Moshe}, {\em On the subword complexity of {T}hue-{M}orse polynomial
  extractions}, Theoret. Comput. Sci., 389 (2007), pp.~318--329.

\bibitem{MS2017}
{\sc C.~M\"ullner and L.~Spiegelhofer}, {\em Normality of the {T}hue--{M}orse
  sequence along {P}iatetski-{S}hapiro sequences, {II}}, Israel J. Math., 220
  (2017), pp.~691--738.

\bibitem{P1953}
{\sc I.~I. {Piatetski-Shapiro}}, {\em On the distribution of prime numbers in
  sequences of the form {$[f(n)]$}}, Mat. Sbornik N.S., 33(75) (1953),
  pp.~559--566.

\bibitem{RS2001}
{\sc J.~Rivat and P.~Sargos}, {\em Nombres premiers de la forme {$\lfloor
  n^c\rfloor$}}, Canad. J. Math., 53 (2001), pp.~414--433.

\bibitem{S2014}
{\sc L.~Spiegelhofer}, {\em Piatetski-{S}hapiro sequences via {B}eatty
  sequences}, Acta Arith., 166 (2014), pp.~201--229.

\bibitem{S2015}
\leavevmode\vrule height 2pt depth -1.6pt width 23pt, {\em Normality of the
  {T}hue--{M}orse sequence along {P}iatetski-{S}hapiro sequences}, Q. J. Math.,
  66 (2015), pp.~1127--1138.

\bibitem{Z2014}
{\sc Y.~Zhang}, {\em Bounded gaps between primes}, Ann. of Math. (2), 179
  (2014), pp.~1121--1174.

\end{thebibliography}
\end{document}